\documentclass[12pt]{amsart}
\usepackage{amsmath,amssymb,amsthm}
\usepackage{comment}
\usepackage{color}
\oddsidemargin 0mm
\evensidemargin 0mm
\topmargin 0mm
\textwidth 160mm
\textheight 230mm
\tolerance=9999

\allowdisplaybreaks[1]

\title[effective actions of the general indefinite unitary groups]
{Effective actions of the general indefinite unitary groups on
holomorphically separable manifolds}

\author{Yoshikazu Nagata}

\address{Graduate School of Mathematics, Nagoya University,
Furocho, Chikusaku, Nagoya 464-8602, Japan}

\email{m10035y@math.nagoya-u.ac.jp}

\theoremstyle{plain}
\newtheorem{theorem}{Theorem}
\newtheorem{lemma}{Lemma}
\newtheorem{corollary}{Corollary}

\newtheorem{definition}{Definition}

\theoremstyle{remark}
\newtheorem{remark}{Remark}

\numberwithin{theorem}{section}
\numberwithin{lemma}{section}
\numberwithin{corollary}{section}
\numberwithin{remark}{section}
\numberwithin{equation}{section}
\numberwithin{claim}{section}
\numberwithin{definition}{section}

\newcommand{\aut}{\mathrm{Aut}}
\newcommand{\B}{\mathbb{B}}
\newcommand{\C}{\mathbb{C}}
\newcommand{\D}{\mathbb{D}}
\newcommand{\Q}{\mathbb{Q}}
\newcommand{\R}{\mathbb{R}}
\newcommand{\Z}{\mathbb{Z}}

\begin{document}
\maketitle
\footnote{2010 \textit{Mathematics Subject Classification}.
{11M32,11M06}
}
\footnote{
\keywords{Indefinite unitary group, holomorphic automorphism, unbounded domain.}
}

\begin{abstract}
We determine all holomorphically separable complex manifolds of
dimension $p+q$ which admits a smooth envelope of holomorphy such that
the general indefinite unitary group of size $p+q$ acts effectively by
holomorphic transformations.
Also we give exact description of the automorphism groups
of those complex manifolds.
As an application we consider a characterization
of those complex manifolds by their automorphism groups.
\end{abstract}

\tableofcontents

\section{Introduction}
\label{intro}

Isaev and Kruzhilin determined all complex manifolds of dimension $n$
on which the unitary group $U(n)$ acts effectively as holomorphic
transformations \cite{IK}.
As a consequence, they proved that the complex euclidean space $\C^n$
is characterized by its automorphism group.
We say that a complex manifold $M$ is characterized by its automorphism group,
if any complex manifold $N$ (sometimes with some assumption)
whose automorphism group
$\aut(N)$ is isomorphic to $\aut(M)$ as topological groups is biholomorphically
equivalent to $M$.
In this paper,
toward a classification of complex manifolds with a holomorphic
indefinite unitary group action, we treat complex manifolds
on which the general
indefinite unitary groups $GU(p,q)$ act effectively by holomorphic
transformations.
Moreover we give exact description of the automorphism groups
of those complex manifolds,
and the characterization by their automorphism groups.
The characterization problem is considered mainly for
homogeneous complex manifolds,
otherwise there exist many counterexamples.
For homogeneous complex manifolds, there exist many positive results
on the characterization,
e.g. \cite{BKS0}, \cite{BKS}, \cite{IK}, \cite{KS1}, \cite{MN}, etc,
and there exists a counterexample in \cite{MN}.
In order to describe our results,
let us fix notation here.
Let $\Omega$ be a complex manifold.
An {\it automorphism} of $\Omega$ means a biholomorphic mapping of $\Omega$ onto itself. 
We denote by $\mathrm{Aut}(\Omega)$ 
the group of all automorphisms of
$\Omega$ equipped with the compact-open topology.
If $A_1, \ldots, A_k$ are square matrices,
$\mathrm{diag}[A_1, \ldots, A_k]$ denotes the matrix
with $A_1, \ldots, A_k$ in the diagonal blocks and $0$ in all other blocks.
We put here two domains in $\C^{p+q}$:
\begin{equation*}
D^{p,q}=\{ (z_1, \ldots, z_q, z_{q+1}, \ldots, z_{p+q}) \in \mathbb{C}^{p+q}: -|z_{1}|^{2} - \cdots -|z_{q}|^{2} 
+ |z_{q+1}|^{2}+ \cdots + |z_{p+q}|^{2} > 0 \},
\end{equation*}
and the exterior of $D^{p,q}$ in $\mathbb{C}^{p+q}$
\begin{equation*}
C^{p,q}=\{ (z_1, \ldots, z_p, z_{p+1}, \ldots, z_{p+q}) \in \mathbb{C}^{p+q}: -|z_{1}|^{2} - \cdots -|z_{q}|^{2}
+ |z_{q+1}|^{2}+ \cdots + |z_{p+q}|^{2} < 0 \}.
\end{equation*}
Recall the indefinite unitary group of signature $(p,q)$
\begin{equation*}
U(p,q)=\{A \in GL(p+q, \mathbb{C}) : A^*JA = J \},
\end{equation*}
where 
\begin{eqnarray*}
J=
\mathrm{diag}[-E_q,E_p].
\end{eqnarray*}
Then we put the general indefinite unitary group of signature $(p,q)$ by
\begin{eqnarray*}
GU(p,q)
&=&
\{A \in GL(p+q, \mathbb{C}) : A^*JA = \nu(A)J, \mathrm{for\,some}\, \nu(A) \in \mathbb{R}_{>0} \}\\
&\simeq& U(p,q) \times \R_{>0},
\end{eqnarray*}
and identify $\mathbb{C}^*$ 
with the center of $GU(p,q)$:
\begin{equation*}
\mathbb{C}^* \simeq \{
\mathrm{diag}[\alpha,\ldots,\alpha] \in GL(p+q, \mathbb{C}):
\alpha \in \mathbb{C}^* \} \subset GU(p,q).
\end{equation*}
Since $U(p,q)$ acts 
on each level sets of
$-|z_{1}|^{2} \cdots -|z_{q}|^{2} + |z_{q+1}|^{2} + \cdots + |z_{p+q}|^{2}$,
and $\mathbb{C}^*$ acts on $D^{n,1}$ and $C^{n,1}$ as scalar multiplication,
the group $GU(p,q)$ is a subgroup of the automorphism groups of these two domains 
$D^{p,q}$ and $C^{p,q}$.
Furthermore,
$GU(n,1)$ acts on $\C^* \times \B^n$ and $\C \times \B^n$
effectively as holomorphic transformations, where
$\B^n = \{ z\in\C^n : |z|<1 \}$. Indeed, we have the action of 
$A=(a_{ij})_{0 \leq i,j \leq n} \in GU(n,1)$ on $\C \times \B^n$ by
\begin{eqnarray*}
\C \times \B^n \ni (z_0,z_1,\ldots,z_n) \rightarrow 
\left(\gamma_0(z_0, z_1, \ldots, z_n),\gamma_1(z_1, \ldots, z_n),\ldots,
\gamma_n(z_1, \ldots, z_n)\right) \in \C \times \B^n,
\end{eqnarray*}
where
\begin{eqnarray*}
\gamma_0(z_0, z_1, \ldots, z_n) =
z_0(a_{00} + \sum_{j=1}^{n} a_{0j}z_j),
\end{eqnarray*}
and
\begin{eqnarray*}
\gamma_i(z_1, \ldots, z_n) =
\frac{a_{i0} + \sum_{j=1}^{n} a_{ij}z_j}{a_{00} + \sum_{j=1}^{n} a_{0j}z_j},
\end{eqnarray*}
for $1 \leq i \leq n$.
This action preserves $\{0\} \times \B^n$, therefore
$GU(n,1)$ acts on $\C^* \times \B^n$.
We note that $C^{n,1}$ is biholomorphic to $\C^* \times \B^n$ by a biholomorphic
map
\begin{equation*}
C^{n,1} \ni (z_{0}, z_{1}, z_{2}, \ldots, 
z_{n}) \mapsto 
\left(z_{0}, \frac{z_{1}}{z_{0}}, \ldots, \frac{z_{n}}{z_{0}} \right) \in 
\mathbb{C}^* \times \mathbb{B}^{n}.
\end{equation*}
and again we see from this map that $GU(n,1)$ acts on $\C^* \times \B^n$.

Clearly $GU(p,q)$ acts on $\C^{p+q}$ and $\C^{p+q} \setminus \{0\}$.
We will show that, under certain conditions,
those domains are all complex manifolds
on which $GU(p,q)$ acts effectively by holomorphic transformations.
The precise statements are the following:
\let\temp\thetheorem
\renewcommand{\thetheorem}{\ref{M1}}
\begin{theorem}
Let $M$ be a connected complex manifold of dimension $n+1 > 2$ 
that is holomorphically separable
and admits a smooth envelope of holomorphy.
Assume that there exists an injective homomorphism of topological groups
$\rho_0 : GU(n,1) \longrightarrow \mathrm{Aut}(M)$.
Then $M$ is biholomorphic to one of the five domains
$\C^{n+1}$, $\C^{n+1} \setminus \{0\}$, $D^{n,1}$,
$C^{n,1} \simeq \C^* \times \B^n$ and $\C \times \B^n$.
\end{theorem}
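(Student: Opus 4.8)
The plan is to upgrade the abstract homomorphism $\rho_0$ to a genuine holomorphic action and then to classify $M$ through its orbit structure together with its envelope of holomorphy. Since $\mathrm{Aut}(M)$ carries the compact-open topology and consists of biholomorphisms, the continuous homomorphism $\rho_0$ yields a continuous action of $GU(n,1)$ by holomorphic transformations; by the Bochner--Montgomery theorem this action is real-analytic and holomorphic in the $M$-variable, so $\rho_0$ is an effective holomorphic action of the Lie group $G := GU(n,1)$. Differentiating gives an injection of $\mathfrak{gu}(n,1)$ into the Lie algebra of holomorphic vector fields on $M$; because these fields are holomorphic the representation complexifies to an infinitesimal action of $\mathfrak{gl}(n+1,\C) = \mathfrak{gu}(n,1)\otimes\C$, which will let me compare the $G$-action with the standard linear action on $\C^{n+1}$.

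Next I would analyze the orbits of $G$ on $M$. Since $\dim_\R G = (n+1)^2 + 1$ exceeds $\dim_\R M = 2n+2$, and the central $\R_{>0}$-factor enlarges the $U(n,1)$-orbits transversally, a dimension count together with effectiveness shows that $M$ contains an open $G$-orbit $\Omega$, which is a complex homogeneous space $G/H$. Computing the isotropy shows $\dim_\R H = n^2$, and identifying $H$ with a conjugate of $U(n-1,1)$ or of $U(n)$ forces $\Omega$, as a complex $G$-manifold, to be biholomorphic to $D^{n,1}$ or to $C^{n,1} \simeq \C^* \times \B^n$; here the assumption $n+1 > 2$ rules out the low-dimensional exceptional isomorphisms of $SU(n,1)$. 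The complement $M \setminus \Omega$ is then a union of lower-dimensional $G$-orbits, and the possibilities are severely constrained: a fixed point, the punctured null cone of the defining Hermitian form (real dimension $2n+1$), or a slice biholomorphic to $\B^n$. To determine how these attach to $\Omega$ I would combine Bochner's linearization theorem at a $G$-fixed point (in the cases where one is present) with a holomorphic slice analysis along the lower orbits, reading off local normal forms from the isotropy representations, which by effectiveness must agree with the standard representation on $\C^{n+1}$.

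Finally I would use holomorphic separability and the smooth envelope of holomorphy to globalize. Holomorphic separability excludes positive-dimensional compact orbits (on which all global holomorphic functions would be constant), hence excludes projective-type equivariant compactifications, so the envelope $\hat M$ is a Stein manifold of dimension $n+1$ to which the $G$-action extends by functoriality of envelopes. Classifying these Stein models --- using the complexified $\mathfrak{gl}(n+1,\C)$-action to realize $\hat M$ equivariantly relative to the standard $GL(n+1,\C)$-space $\C^{n+1}$ --- should leave exactly $\C^{n+1}$, $C^{n,1}$ and $\C\times\B^n$. One then lists the $G$-invariant subdomains $M\subseteq\hat M$ whose envelope of holomorphy is again $\hat M$: for $\hat M = \C^{n+1}$ these are $\C^{n+1}$, $\C^{n+1}\setminus\{0\}$ and $D^{n,1}$ (the last two being non-Stein with envelope $\C^{n+1}$ by Hartogs extension), while the Stein cases $C^{n,1}$ and $\C\times\B^n$ force $M = \hat M$. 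This yields precisely the five manifolds in the statement.

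I expect the main obstacle to be twofold. First, the homogeneous identification of the open orbit $G/H$ with $D^{n,1}$ or $C^{n,1}$ as a complex (not merely smooth) manifold requires producing an explicit $G$-equivariant biholomorphism and matching the invariant complex structures, where the non-compactness of $G$ prevents any averaging argument. Second, the globalization step --- controlling how the lower-dimensional orbits attach and showing that the envelope-of-holomorphy hypothesis excludes every configuration other than the five (in particular ruling out a spurious ``$D^{n,1}$ with an attached $\B^n$-slice'') --- is delicate, since Bochner linearization governs only the compact subgroup $K = U(n)\times U(1)$ and the local normal form must be propagated to a global biholomorphism using the extended action on the Stein envelope.
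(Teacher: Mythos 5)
Your proposal takes a genuinely different route from the paper (which applies the Kodama--Shimizu standardization theorem to the maximal compact subgroup $U(1)\times U(n)$, reduces $M$ to a Reinhardt domain, and then classifies by explicit Laurent-series computations), but as written it has gaps at exactly the points where the real work lies. The first is the claim that ``a dimension count together with effectiveness shows that $M$ contains an open $G$-orbit.'' A dimension count proves nothing of the sort: $U(n+1)$ acts effectively and linearly on $\C^{n+1}$ with $\dim_\R U(n+1)=(n+1)^2>2(n+1)$ and has no open orbit. The non-compact directions of $GU(n,1)$ make an open orbit plausible, but you give no argument, and the paper's entire machinery (standardization plus the analysis of the weights $a_1,a_2,c_1,c_2$ of the central $\C^*$) is in effect the substitute for such an argument. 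The second gap is one you flag yourself: identifying the open orbit $G/H$ with $D^{n,1}$ or $C^{n,1}$ \emph{as a complex manifold}. Knowing $\dim_\R H=n^2$ does not pin $H$ down up to conjugacy, and even with $H\simeq U(n)$ one must rule out non-standard $G$-invariant complex structures on $G/H$; no mechanism for this is proposed.

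The third and largest gap is the globalization. The assertion that the equivariant Stein envelopes are ``exactly $\C^{n+1}$, $C^{n,1}$ and $\C\times\B^n$,'' and the subsequent enumeration of invariant subdomains with prescribed envelope, is precisely the content of the theorem and is not derived from anything. The paper spends most of its proof excluding competitors such as $\C^*\times\C^n$, $\C\times(\C^n\setminus\{0\})$, the domains $D^-_{a,\lambda}$, $C^-_{a,\lambda}$, annular regions, and non-standard weights $\lambda\neq 1$, using Lemma~\ref{notsimple}, the linearization results of Kutzschebauch, and careful bookkeeping of which Laurent coefficients can survive; your framework would need an equally concrete exclusion argument for exotic equivariant attachments of the lower-dimensional orbits, and none is given. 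A smaller but genuine error: holomorphic separability does not exclude positive-dimensional compact \emph{real} orbits (the $U(n)$-orbits in $\C^n$ are compact spheres); the Liouville-type argument you invoke applies only to compact complex orbits. In short, the skeleton is a legitimate alternative strategy, but each of its three stages currently rests on an unproved assertion.
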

\let\thetheorem\temp
\addtocounter{theorem}{-1}

\let\temp\thetheorem
\renewcommand{\thetheorem}{\ref{M2}}
\begin{theorem}
Let $M$ be a connected complex manifold of dimension $2$ 
that is holomorphically separable
and admits a smooth envelope of holomorphy.
Assume that there exists an injective homomorphism of topological groups
$\rho_0 : GU(1,1) \longrightarrow \mathrm{Aut}(M)$.
Then $M$ is biholomorphic to one of the four domains
$\C^{2}$, $\C^{2} \setminus \{0\}$, $D^{1,1}\simeq \C^* \times \D$ and
$\C \times \D$,
where $\D=\B^1$.
\end{theorem}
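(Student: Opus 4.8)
The plan is to run the same orbit analysis that proves Theorem~\ref{M1}, and then to isolate the features special to the balanced signature $p=q=1$. Write $G=\rho_0(GU(1,1))\subset\aut(M)$; since $\rho_0$ is an injective homomorphism of topological groups, $G$ is a Lie subgroup isomorphic to $GU(1,1)$ acting effectively and holomorphically, with $\dim_{\R}G=5$ while $\dim_{\R}M=4$. Because $M$ is holomorphically separable and admits a smooth envelope of holomorphy $\widehat M$, the action extends holomorphically to the Stein manifold $\widehat M$, and I would carry out the analysis there. As $\dim_{\R}G>\dim_{\R}M$, every orbit has real codimension at least one, and a first step is to show that the generic orbit is in fact open, so that $M$ is almost homogeneous and decomposes as a union of finitely many open orbits together with the lower--dimensional orbits in their closures.

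Next I would identify the open orbits. Each such orbit $\Omega$ is $G/H$ with $\dim_{\R}H=1$, and it carries a $G$--invariant foliation by the level sets of the indefinite Hermitian form $r=-|z_1|^2+|z_2|^2$ built from the standard representation; these level sets are Levi--nondegenerate real hypersurfaces. I would show that $\Omega$ is biholomorphic to $D^{1,1}$ or to $C^{1,1}$, and then use that, for $p=q=1$, interchanging the two coordinates carries $D^{1,1}=\{r>0\}$ onto $C^{1,1}=\{r<0\}$, so that $\Omega\simeq D^{1,1}\simeq\C^*\times\D$ in either case. Then I would classify the possible lower--dimensional orbits --- the punctured null cone $\{r=0\}\setminus\{0\}$ (three--dimensional), the disk orbit $\{0\}\times\D$ (two--dimensional), and the fixed point $\{0\}$ --- and, where a fixed point is present, linearize the maximal compact torus $K\cong U(1)\times U(1)$ by Bochner's theorem to obtain a standard $\C^2$ chart. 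Finally I would reconstruct $M$ from its orbit decomposition: a single open orbit gives $D^{1,1}\simeq\C^*\times\D$; an open orbit with the disk attached gives $\C\times\D$; two open orbits glued along the null cone give $\C^2\setminus\{0\}$, and adjoining the fixed point gives $\C^2$. Matching these against the hypotheses (holomorphic separability of $M$ and Stein--ness of $\widehat M$, which control which attachments are admissible) yields exactly the four listed domains.

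The main obstacle is the CR--geometry of the generic orbits, which in dimension two are three--dimensional real hypersurfaces. The rigidity and normal--form inputs available in Theorem~\ref{M1}, where the orbits are CR manifolds of dimension at least five, are not available here, and must be replaced by the classification of homogeneous strongly pseudoconvex (spherical) three--dimensional CR structures; this is where the proof genuinely departs from the higher--dimensional case. A further delicate point, special to $p=q=1$, is bookkeeping the coincidence $C^{1,1}\simeq D^{1,1}$ throughout the reconstruction: it is precisely this identification that merges two of the open--orbit types and collapses the five domains of Theorem~\ref{M1} to the four appearing here. Finally, to be sure the list is complete one must verify that no other two--dimensional homogeneous model --- for instance $\B^2$ or $\D\times\D$ --- can occur as an open orbit, i.e.\ that $GU(1,1)$ admits no transitive effective holomorphic action on such a domain; I expect this, together with the three--dimensional CR analysis, to be the most technical part of the argument.
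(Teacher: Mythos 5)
Your plan is a genuinely different route from the paper's, and in its present form it has gaps that are not merely technical. The paper does not analyze orbits of the abstract action at all: it invokes the Generalized Standardization Theorem (Lemma~\ref{4}) to replace $M$ by a Reinhardt domain $\Omega\subset\C^2$ on which $U(1)\times U(1)$ acts linearly, then pins down the image of the center $\C^*$ of $GU(1,1)$ inside $\Pi(\Omega)$ via Lemma~\ref{KS1}, and reads off from the resulting Laurent-series constraints (\ref{f1})--(\ref{f2}) which Reinhardt domains can support the action. This is where the hypotheses of holomorphic separability and a smooth envelope of holomorphy are actually consumed; in your proposal they appear only as an unspecified ``matching against the hypotheses'' at the very end, which is a sign that the argument is not yet using them.

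Concretely, three steps of your outline are asserted rather than proved, and each is where the real work lies. First, ``the generic orbit is open'': $\dim_{\R}G=5>4$ does not give this, and nothing in your setup forces the isotropy to have dimension exactly one at a generic point. Second, the identification of an open orbit with $D^{1,1}$ or $C^{1,1}$ presupposes that the orbit carries the foliation by level sets of $-|z_1|^2+|z_2|^2$ ``built from the standard representation'' --- but on the abstract $M$ there is no linear model a priori, and the embedding of the center $\C^*$ need not be scalar multiplication (in the paper this is exactly the dichotomy $c_1c_2=0$ versus $c_1c_2\neq 0$, and the case $c_1c_2=0$ produces $\C\times\D$, whose open orbit $\C^*\times\D$ is homogeneous under an action that is \emph{not} linearizable). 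You would also have to exclude, besides $\B^2$ and $\D\times\D$, candidates such as $\C^*\times\C$, $\C\times\C^*$, $\C^*\times\C^*$ and annulus-type Reinhardt domains, which is precisely the content of Lemmas~\ref{4.3}--\ref{4.6} and the case analysis (I'), (II') in the paper. Third, the reconstruction step (``two open orbits glued along the null cone give $\C^2\setminus\{0\}$,'' etc.) needs a uniqueness statement for the attachments; without one, the list of four domains is not established as complete. Until these three points are supplied --- and the three-dimensional CR classification you invoke is actually brought to bear on orbits whose CR structure you have not yet exhibited --- the proposal is a program rather than a proof.
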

\let\thetheorem\temp
\addtocounter{theorem}{-1}

\let\temp\thetheorem
\renewcommand{\thetheorem}{\ref{M3}}
\begin{theorem}
Let $p, q >1$ and $n = p + q$.
Let $M$ be a connected complex manifold of dimension $n$ 
that is holomorphically separable
and admits a smooth envelope of holomorphy.
Assume that there exists an injective homomorphism of topological groups
$\rho_0 : GU(p,q) \longrightarrow \mathrm{Aut}(M)$.
Then $M$ is biholomorphic to one of the four domains
$\C^{n}$, $\C^{n} \setminus \{0\}$, $D^{p,q}$ and
$C^{p,q}$.
\end{theorem}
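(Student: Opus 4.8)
The plan is to read off $M$ from the orbit structure of the closed subgroup $G:=\rho_0(GU(p,q))\subseteq\aut(M)$. As a real Lie group $\dim_\R GU(p,q)=n^2+1$, its maximal compact subgroup is $K\cong U(p)\times U(q)$, and $\dim_\R M=2n$. Writing $h(z)=-|z_1|^2-\cdots-|z_q|^2+|z_{q+1}|^2+\cdots+|z_n|^2$, recall that $D^{p,q}=\{h>0\}$ and $C^{p,q}=\{h<0\}$ are each single $G$-orbits. Since $M$ is holomorphically separable and admits a smooth envelope of holomorphy $\widehat M$, which is Stein, the $G$-action extends holomorphically to $\widehat M$ and $M\hookrightarrow\widehat M$ is $G$-equivariant; I would use this to transport local normal forms back to $M$, to keep orbit spaces Hausdorff, and to apply Bochner linearization at fixed points.

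Next I would run an orbit census. Because $\dim_\R G=n^2+1$ is large compared with $2n$, every orbit has isotropy of dimension at least $(n-1)^2$; analyzing the admissible closed subgroups of $GU(p,q)$ of such dimension together with the CR structure each orbit inherits should confine the orbit types to four possibilities: open orbits, real hypersurfaces, complex hypersurfaces (real codimension two, holomorphically embedded), and fixed points. If $G$ acts transitively the isotropy has dimension $(n-1)^2$, and I expect it to be conjugate to $U(p-1,q)$ or $U(p,q-1)$ according to the sign of $h$ on the orbit; recognizing $G/H$ with its invariant complex structure then identifies $M$ with $D^{p,q}$ or $C^{p,q}$.

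If $G$ is not transitive, lower-dimensional orbits appear, and by connectedness a separating orbit must lie between the two open cones $\{h>0\}$ and $\{h<0\}$. At a fixed point $x_0$ I would linearize the $K$-action by Bochner's theorem; in the resulting coordinates the isotropy representation $G\to GL(T_{x_0}M)$ is, up to conjugation, duality and a central character twist, the standard $n$-dimensional representation of $GU(p,q)$, this being the only faithful representation of that dimension. A linearization argument then shows the whole $G$-action coincides near $x_0$ with this standard linear action, whose orbits are $\{0\}$, the null cone $\{h=0\}\setminus\{0\}$, and the two open cones. Propagating the model via the Stein envelope and holomorphic separability, I would conclude $M=\C^n$ when the fixed point is present and $M=\C^n\setminus\{0\}$ when instead there is a null-cone hypersurface orbit but no fixed point.

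The step I expect to be the genuine obstacle is excluding complex-hypersurface orbits, since these are exactly what produce the extra product model $\C\times\B^{\,n-1}$ present when $\min(p,q)=1$. Such an orbit $N\subseteq M$ would be a $GU(p,q)$-homogeneous complex manifold of complex dimension $n-1$; identifying it through the Tits/flag fibration, I expect it to be biholomorphic to an open $U(p,q)$-orbit in $\mathbb{P}^{n-1}$, namely the projectivization of $\{h<0\}$ or $\{h>0\}$. When $q=1$ (resp. $p=1$) this projectivized cone is the ball $\B^{\,n-1}$, a bounded domain, and the product model genuinely occurs; but when $p,q>1$ the form $h$ is indefinite on every hyperplane, so this projectivized orbit contains projective lines. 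These are compact complex curves on which every holomorphic function is constant, contradicting the holomorphic separability that $N$ inherits from $M$. Hence for $p,q>1$ no complex-hypersurface orbit can occur, the product model is eliminated, and the census leaves exactly $\C^n$, $\C^n\setminus\{0\}$, $D^{p,q}$ and $C^{p,q}$. Making the identification of $N$ and the line-containment precise is the crux of the argument.
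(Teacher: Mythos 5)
Your proposal is an orbit-census program in the Isaev--Kruzhilin style, which is genuinely different from the paper's route, but as written it contains gaps that are not merely technical. The paper's strategy is to invoke the Generalized Standardization Theorem (Lemma~\ref{4}) --- this is precisely what the hypotheses ``holomorphically separable with smooth envelope of holomorphy'' are used for --- to replace $M$ by a Reinhardt domain $\Omega\subset\C^{n}$ on which $U(q)\times U(p)$ acts by matrix multiplication, and then to control \emph{every} element of $\rho(GU(p,q))$ through Laurent expansions; the decisive output (Lemma~\ref{cla1''} and Remark~\ref{rem1''}) is that the whole action is the standard irreducible linear representation, after which the boundary analysis is short. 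You never obtain comparable global control. Concretely: (1) the orbit census (``analyzing the admissible closed subgroups \dots should confine the orbit types'') and the identification of the transitive isotropy with $U(p-1,q)$ or $U(p,q-1)$ are asserted, not proved; even granting the isotropy group, you would still need uniqueness of the invariant complex structure on $G/H$ before concluding $M\simeq D^{p,q}$ or $C^{p,q}$. (2) Bochner linearization applies to the compact subgroup $K$, not to $G$; the passage from a $K$-linearization at a fixed point to the statement that the full $G$-action is standard linear near $x_0$, and then the ``propagation'' to a global identification $M=\C^{n}$ or $\C^{n}\setminus\{0\}$, is exactly the hard part and is missing. In particular nothing in your census excludes invariant configurations such as $D_a\setminus(\{0\}\times\C^{p})$ or the shell $\{\,b(|z_1|^2+\cdots+|z_q|^2)<|z_{1+q}|^2+\cdots+|z_{p+q}|^2<a(|z_1|^2+\cdots+|z_q|^2)\,\}$, which the paper must rule out separately in Cases (I$''$) and (I\hspace{-.1em}I$''$) of Section~\ref{sec:5} using linearity, irreducibility, and Lemma~\ref{condGU}.

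The one idea in your sketch that is both new relative to the paper and potentially sound is the exclusion of complex-hypersurface orbits for $p,q>1$: such an orbit inherits holomorphic separability from $M$, while the open $U(p,q)$-orbits in $\mathbb{P}^{n-1}$ contain a $\mathbb{P}^{\min(p,q)-1}$, hence compact curves when $p,q>1$. That cleanly explains why the models $\C\times\B^{n-1}$ and $\C^*\times\B^{n-1}$ disappear for $p,q>1$. But you yourself flag the identification of the hypersurface orbit with a projectivized cone as ``the crux'' and do not supply it: a $GU(p,q)$-homogeneous complex $(n-1)$-manifold is not a priori an orbit in $\mathbb{P}^{n-1}$, and pinning down the isotropy, the flag realization, and the invariant complex structure is real work. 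Until that step, the local-to-global step, and the census itself are carried out, the proposal is a plausible research program rather than a proof of Theorem~\ref{M3}.
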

\let\thetheorem\temp
\addtocounter{theorem}{-1}

Observe that clearly $D^{p,p} \simeq C^{p,p}$,
and, if $p \neq q$, then $D^{p,q} \not \simeq C^{p,q}$ (see \cite{MN}).
We can put Theorem~\ref{M1} and Theorem~\ref{M2} together,
since $D^{1,1} \simeq C^{1,1}$.
When we prove those theorems, however, we need to divide those cases.
We give the proofs of Theorems~\ref{M1}, \ref{M2} and \ref{M3}
in Section \ref{sec:3}, \ref{sec:4} and \ref{sec:5},
respectively.

In Section \ref{sec:6}, we give precise descriptions of the automorphism groups
of the domains in Theorems~\ref{M1}, \ref{M2} and \ref{M3},
except for $\C^n$ and $\C^n \setminus \{0\}$.

\let\temp\thetheorem
\renewcommand{\thetheorem}{\ref{autCB}}
\begin{theorem}[\cite{BKS0}]
For $f=(f_{0}, f_{1}, \ldots, f_{n}) \in \aut(\C \times \B^n)$,
\begin{align*}
f_{0}(z_{0}, z_{1}, \ldots, z_{n})=
a(z_{1}, \ldots, z_{n}) z_{0} + b(z_{1}, \ldots, z_{n}),
\end{align*}
and
\begin{align*}
f_{i}(z_{0}, z_{1}, \ldots, z_{n}) =
\frac{a_{i 0}+\sum_{j=1}^{n} a_{i j} z_{j}}{a_{00}+\sum_{j=1}^{n} a_{0 j} z_{j}}
\end{align*}
for $i=1, \ldots, n$,
where $a$ is a nowhere vanishing holomorphic function on 
$\mathbb{B}^{n}$, 
$b$ is a holomorphic function on 
$\mathbb{B}^{n}$,
and the matrix $(a_{ij})_{0\leq i, j \leq n}$ is an element of $SU(n,1)$.  
\end{theorem}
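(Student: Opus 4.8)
The plan is to exploit the fact that $\C \times \B^n$ carries an intrinsic foliation by complex lines which every automorphism must preserve; this forces the product structure to be respected, after which the problem reduces to the known structure of $\aut(\B^n)$ and $\aut(\C)$.

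First I would characterize the leaves $\C \times \{w\}$ intrinsically. Since $\B^n$ is bounded, every holomorphic map $\C \to \B^n$ is constant by Liouville's theorem; hence for any entire curve $\varphi = (\varphi_0, \varphi') : \C \to \C \times \B^n$ the component $\varphi'$ is constant, so the image of $\varphi$ lies in a single leaf $\C \times \{w\}$. Conversely, each leaf is itself the image of the entire curve $t \mapsto (t, w)$. Therefore, for $p = (z_0, w)$, the union $L_p$ of the images of all entire curves passing through $p$ equals exactly the leaf $\C \times \{w\}$. Since $L_p$ is defined purely in terms of the complex structure, any $f \in \aut(\C \times \B^n)$ satisfies $f(L_p) = L_{f(p)}$, so $f$ permutes the leaves.

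Because $f$ sends $\C \times \{w\}$ into a single leaf $\C \times \{w'\}$, the components $f_1, \ldots, f_n$ are constant along each leaf, i.e.\ independent of $z_0$. They therefore define a holomorphic self-map $g(w) = (f_1(w), \ldots, f_n(w))$ of $\B^n$; applying the same argument to $f^{-1}$ shows $g$ is an automorphism of $\B^n$. By the classical description of $\aut(\B^n)$ via the projective action of $U(n,1)$ on the ball model, $g$ has the stated fractional-linear form, and since the transformation is unchanged under scaling of the coefficient matrix, we may normalize the determinant so that $(a_{ij})_{0 \le i,j \le n} \in SU(n,1)$.

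It remains to determine $f_0$. For each fixed $w$, the restriction $z_0 \mapsto f_0(z_0, w)$ is a biholomorphism of the leaf $\C \times \{w\}$ onto $\C \times \{g(w)\}$, hence an automorphism of $\C$, which is necessarily affine: $f_0(z_0, w) = a(w) z_0 + b(w)$ with $a(w) \neq 0$. Writing $b(w) = f_0(0, w)$ and $a(w) = (\partial f_0 / \partial z_0)(0, w)$ exhibits $a$ and $b$ as holomorphic functions on $\B^n$, with $a$ nowhere vanishing since each slice map is invertible. This yields the asserted form. The only delicate point is the intrinsic characterization of the leaves; once the foliation is shown to be preserved, everything reduces to the classical structure of $\aut(\B^n)$ together with the affineness of $\aut(\C)$.
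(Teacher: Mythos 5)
Your proposal is correct and follows essentially the same route as the paper: Liouville's theorem (boundedness of $\B^n$) forces $f_1,\ldots,f_n$ to be independent of $z_0$, the same argument applied to $f^{-1}$ identifies $(f_1,\ldots,f_n)$ with an element of $\aut(\B^n)$ in its standard fractional-linear form, and the slice maps $z_0\mapsto f_0(z_0,w)$ are then injective entire self-maps of $\C$, hence affine. The "intrinsic foliation by entire curves" framing is a harmless repackaging of the paper's direct Liouville argument and changes nothing of substance.
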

\let\thetheorem\temp
\addtocounter{theorem}{-1}

\let\temp\thetheorem
\renewcommand{\thetheorem}{\ref{autC*B}}
\begin{theorem}[\cite{MN}]
For $f=(f_{0}, f_{1}, \ldots, f_{n}) \in \mathrm{Aut}(C^{n,1})$,
\begin{eqnarray*}
f_{0}(z_{0}, z_{1}, \ldots, z_{n}) =
c\left(\frac{z_{1}}{z_{0}}, \ldots, \frac{z_{n}}{z_{0}}\right) z_{0} 
 \ \mathrm{or} \ 
c\left(\frac{z_{1}}{z_{0}}, \ldots, \frac{z_{n}}{z_{0}}\right) z_{0}^{-1},
\end{eqnarray*}
and
\begin{eqnarray*}
f_{i}(z_{0}, z_{1}, \ldots, z_{n}) =f_{0}(z_{0}, z_{1}, \ldots, z_{n}) 
\frac{a_{i 0}+\sum_{j=1}^{n} a_{i j} z_{j}}{a_{00}+\sum_{j=1}^{n} a_{0 j} z_{j}},
\end{eqnarray*}
for $i=1, \ldots, n$,
where $c$ is a nowhere vanishing holomorphic function on 
$\mathbb{B}^{n}$,
and the matrix $(a_{ij})_{0\leq i,j \leq n}$ is an element of $SU(n,1)$.  
\end{theorem}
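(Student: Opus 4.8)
The plan is to transport everything to the product $\C^* \times \B^n$ through the explicit biholomorphism
\[
\Phi \colon C^{n,1} \longrightarrow \C^* \times \B^n, \qquad
\Phi(z_0, z_1, \ldots, z_n) = \Bigl(z_0, \tfrac{z_1}{z_0}, \ldots, \tfrac{z_n}{z_0}\Bigr),
\]
recorded in the introduction. For $f \in \aut(C^{n,1})$ I would set $g = \Phi \circ f \circ \Phi^{-1} \in \aut(\C^* \times \B^n)$ and write $g = (g_0, g_1, \ldots, g_n)$, where $g_0$ is the $\C^*$-component and $(g_1, \ldots, g_n)$ is the $\B^n$-component. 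Because the first component of $\Phi$ is the identity in the $0$-th slot, once $g$ is determined the formulas for $f$ follow by the substitution $w_0 = z_0$, $w_j = z_j/z_0$. Thus it suffices to compute $\aut(\C^* \times \B^n)$, in close analogy with Theorem~\ref{autCB}.

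The crucial point is that $g$ must respect the projection $\pi \colon \C^* \times \B^n \to \B^n$; that is, $g$ carries each fiber $\{w\} \times \C^*$ into a single fiber. This rests on the rigidity statement that \emph{every holomorphic map $\C^* \to \B^n$ is constant}: each coordinate is a bounded holomorphic function on $\C \setminus \{0\}$, its isolated singularity at the origin is removable, and the resulting bounded entire function is constant by Liouville's theorem. Applying this to the map $t \mapsto (g_1(w,t), \ldots, g_n(w,t))$ for fixed $w \in \B^n$ shows that the $\B^n$-component of $g$ is constant along each fiber, so there is $\bar g(w) \in \B^n$ with $g(w,t) = (g_0(w,t), \bar g(w))$. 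Running the same argument for $g^{-1}$ shows that $g$ maps fibers bijectively onto fibers.

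It then follows quickly that $\bar g \in \aut(\B^n)$: evaluating at $t = 1$ shows $\bar g$ is holomorphic, and the descent of $g^{-1}$ provides its holomorphic inverse, so $\bar g$ is the Möbius-type transformation attached to an element of $SU(n,1)$, which accounts for the fractional-linear part of the $f_i$. On each fiber $t \mapsto g_0(w,t)$ is an automorphism of $\C^*$, hence equals $c(w)\,t$ or $c(w)\,t^{-1}$ with $c$ holomorphic and nowhere vanishing on $\B^n$; since the choice between the two shapes is locally constant in $w$, it is constant on the connected base $\B^n$, and this produces exactly the two alternatives for $f_0$. Substituting $w_0 = z_0$ and $w_j = z_j/z_0$ and clearing denominators then yields the asserted expressions. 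The main obstacle is the fiber-invariance claim of the second paragraph, where the boundedness of $\B^n$ and the structure of $\C^*$ are used in an essential way; the remaining identifications of the base and fiber automorphisms are routine.
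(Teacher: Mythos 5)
Your proposal is correct and takes essentially the same approach as the paper: the paper gives no independent argument here, stating only that the result is proved ``in a similar way'' to Theorem~\ref{autCB} (the $\C\times\B^n$ case) and deferring to \cite{MN}, and your write-up is precisely that adaptation --- Liouville-type rigidity forcing fiber-preservation (with the removable-singularity step needed for $\C^*$-fibers), descent to $\aut(\B^n)=PU(n,1)$, and the classification $\aut(\C^*)=\{at^{\pm1}\}$ with the local constancy of the exponent on the connected base. No gaps of substance; the only detail left implicit is the standard argument for why the choice of $t$ versus $t^{-1}$ is locally constant (e.g.\ the sign of $\partial_t g_0(w,1)/g_0(w,1)$ varies continuously in $w$).
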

\let\thetheorem\temp
\addtocounter{theorem}{-1}

\let\temp\thetheorem
\renewcommand{\thetheorem}{\ref{autDpq}}
\begin{theorem} 
$\aut(D^{p,q}) \simeq GU(p,q)$ for $p>1$, $q>0$.
\end{theorem}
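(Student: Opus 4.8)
The plan is to reduce the computation of $\aut(D^{p,q})$ to a problem about linear maps by first extending every automorphism to $\C^{n}$. First I would verify that, because $p>1$, the envelope of holomorphy of $D^{p,q}$ is all of $\C^{n}$: fixing the ``negative'' coordinates $(z_{1},\dots ,z_{q})$, the slice of $D^{p,q}$ in the remaining variables is the exterior of a ball $\{(z_{q+1},\dots ,z_{p+q}):|z_{q+1}|^{2}+\dots +|z_{p+q}|^{2}>|z_{1}|^{2}+\dots +|z_{q}|^{2}\}\subset\C^{p}$, and since $p\ge 2$ the Hartogs phenomenon fills in these slices, so every $h\in\mathcal O(D^{p,q})$ extends to $\C^{n}$. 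Consequently, for $f=(f_{1},\dots ,f_{n})\in\aut(D^{p,q})$ each component extends to an entire function, giving a holomorphic map $\tilde f:\C^{n}\to\C^{n}$; applying the same to $f^{-1}$ and noting that $\tilde f\circ\widetilde{f^{-1}}$ and $\widetilde{f^{-1}}\circ\tilde f$ are entire self-maps of $\C^{n}$ agreeing with the identity on the open set $D^{p,q}$, I get $\tilde f\in\aut(\C^{n})$ with $\tilde f(D^{p,q})=D^{p,q}$. Thus restriction-and-extension yields an isomorphism of topological groups $\aut(D^{p,q})\cong G:=\{g\in\aut(\C^{n}):g(D^{p,q})=D^{p,q}\}$, and since $GU(p,q)\subseteq G$ is already known, it remains to prove $G\subseteq GU(p,q)$.

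Next I would pin down the linear part. Write $r(z)=z^{*}Jz$, so that $D^{p,q}=\{r>0\}$ and its boundary $N=\partial D^{p,q}=\{r=0\}$ is the null cone of $J$. Since $J$ is nondegenerate, $dr$ vanishes on $N$ only at the origin, so $N\setminus\{0\}$ is a smooth real hypersurface while $0$ is the unique singular point of $N$; as any $g\in G$ is a real-analytic diffeomorphism of $\C^{n}$ carrying $N$ onto $N$, it must fix this singular point, $g(0)=0$. Now consider the conjugates $g_{t}:=m_{1/t}\circ g\circ m_{t}$ by the scalings $m_{t}(z)=tz\in GU(p,q)$; these lie in $G$, and $g_{t}(z)=g(tz)/t\to L:=Dg(0)$ locally uniformly as $t\to 0$, with $L\in GL(n,\C)$. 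Passing to the limit in $g_{t}(D^{p,q})=D^{p,q}=g_{t}^{-1}(D^{p,q})$ shows $L(D^{p,q})=D^{p,q}$, so $L$ is a linear automorphism preserving the sign of $r$; hence the Hermitian forms $z\mapsto r(Lz)$ and $r$ have the same null cone and the same positive locus, which forces $L^{*}JL=\nu J$ for some $\nu>0$, i.e. $L\in GU(p,q)$.

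It then suffices to treat $\phi:=L^{-1}g\in G$, which satisfies $\phi(0)=0$ and $D\phi(0)=I$, and to show $\phi=\mathrm{id}$. Because $g$ preserves $N$ with a simple zero of $r$ along $N\setminus\{0\}$, real-analytic division gives $r(\phi(z))=u(z)\,r(z)$ with $u$ real-analytic, $u>0$ and $u(0)=1$. Expanding $\phi=\mathrm{id}+Q_{2}+Q_{3}+\cdots$ into holomorphic homogeneous parts and $u=1+u_{1}+\cdots$, I would compare homogeneous terms in this identity. At the first nontrivial (bidegree $(2,1)$) order the relation reads $z^{*}JQ_{2}(z)=\ell(z)\,z^{*}Jz$ for a linear form $\ell$, and, since the coefficients of the independent antiholomorphic monomials $\bar z_{j}$ must vanish, this forces $Q_{2}(z)=\ell(z)\,z$. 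This is exactly the quadratic part of a fractional-linear (``Möbius'') transformation of the cone, and continuing the expansion shows that $\phi$ must coincide with such a transformation; but $\phi$ is a global automorphism of $\C^{n}$ with no poles, so its denominator must be constant, which forces $\ell=0$ and all higher terms to vanish. Hence $\phi=\mathrm{id}$, $g=L\in GU(p,q)$, and $G=GU(p,q)$.

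I expect the last step — the rigidity showing $\phi=\mathrm{id}$ — to be the main obstacle, since it is where one must rule out the genuinely nonlinear automorphisms that the cone structure alone permits infinitesimally. The homogeneous-expansion argument above can be organized as an induction on the degree, but it is delicate; alternatively one may invoke the rigidity of the Levi-nondegenerate hyperquadric $N\setminus\{0\}$ (a Poincaré–Alexander–Tanaka type theorem for indefinite hyperquadrics) to conclude directly that $g$ restricts to a fractional-linear CR-automorphism of $N$, hence, being holomorphic on all of $\C^{n}$ and fixing $0$, lies in $GU(p,q)$.
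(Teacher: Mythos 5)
Your reduction steps coincide with the paper's: Hartogs extension of automorphisms to $\aut(\C^{p+q})$ using that the slices over fixed $(z_1,\dots,z_q)$ are complements of closed balls in $\C^p$ ($p>1$); the observation that the origin is the unique singular point of the null cone, so $g(0)=0$; and the rescaling $g(tz)/t\to L={\rm Jac}_\C g(0)$, with $L\in GU(p,q)$ because a linear map preserving $D^{p,q}$ lies in $GU(p,q)$ (the paper's Lemma~\ref{condGU}). The divergence, and the problem, is in the final rigidity step for $\phi=L^{-1}g$ with $\phi(0)=0$, $D\phi(0)=I$.

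Your order-by-order analysis of $r(\phi(z))=u(z)r(z)$ is sound as far as it goes, but it proves less than you claim. At bidegree $(k,1)$ the only contribution on the left is $z^*JQ_k(z)$ and on the right $u_{k-1,0}(z)\,z^*Jz$, so comparing coefficients of each $\bar z_j$ gives $Q_k(z)=u_{k-1,0}(z)\,z$ at \emph{every} order; the expansion therefore yields only $\phi(z)=\lambda(z)z$ for some entire $\lambda$ with $\lambda(0)=1$, not that $\phi$ is fractional-linear. Indeed it cannot yield more: for \emph{any} entire $\lambda$ the map $z\mapsto\lambda(z)z$ satisfies $r(\lambda(z)z)=|\lambda(z)|^2r(z)$ and hence preserves the cone and $D^{p,q}$ as a self-map, so the functional equation alone never forces $\ell=0$, and your step ``its denominator must be constant'' has nothing to apply to. The missing ingredient is injectivity: since $\phi(z)=\lambda(z)z$ preserves every complex line through the origin, its restriction to such a line is an injective entire function of one variable fixing $0$, hence linear, which forces $\lambda\equiv 1$. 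This is exactly how the paper closes the argument (its Lemma~\ref{1.2} is precisely the statement $f(z)=\lambda(z)z$ in Taylor-coefficient form, followed by the injectivity-on-lines step). Your proposed fallback via Poincar\'e--Alexander--Tanaka rigidity also does not apply as stated: $\partial D^{p,q}\setminus\{0\}$ is everywhere Levi-degenerate (it is foliated by the complex lines through the origin, and the generator $z_0$ lies in the kernel of the Levi form at $z_0$), so it is not a Levi-nondegenerate hyperquadric; one would have to pass to the projectivized quadric, which requires additional work.
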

\let\thetheorem\temp
\addtocounter{theorem}{-1}

As a corollary of above theorems, together with the classification in \cite{IK},
we can state the characterization theorem for some domains
by their automorphism groups.

\begin{corollary}\label{cor1}
Let M be a connected complex manifold of dimension p+q that is holomorphically separable
and admits a smooth envelope of holomorphy. 
Assume that $\mathrm{Aut}(M)$ is isomorphic to $\mathrm{Aut}(D^{p,q})$
as topological groups. Then
\begin{description}
\item[$(i)$] If $q=1$, then $M\simeq D^{p,1}$,
\item[$(ii)$] If $p=1$, then $M\simeq D^{1,q}$,
\item[$(iii)$] If $p=q$, then $M\simeq D^{p,p}$.
\end{description}
\end{corollary}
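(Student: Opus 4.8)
The plan is to convert the hypothesis into an effective holomorphic action of $GU(p,q)$ on $M$, invoke the classification Theorems~\ref{M1}--\ref{M3}, and then peel off every resulting candidate except $D^{p,q}$ using invariants of topological groups. Since $GU(p,q)$ acts effectively and holomorphically on $D^{p,q}$ by its standard linear action, there is a continuous injective homomorphism $GU(p,q)\hookrightarrow\aut(D^{p,q})$, which composed with the assumed isomorphism $\aut(D^{p,q})\simeq\aut(M)$ yields an injective continuous homomorphism $GU(p,q)\to\aut(M)$. When $p=q>1$ this is exactly the hypothesis of Theorem~\ref{M3}; when exactly one of $p,q$ equals $1$ and the other exceeds $1$, after possibly using the abstract isomorphism $GU(1,q)\simeq GU(q,1)$ we are in the setting of Theorem~\ref{M1} with $n=\max(p,q)$; and the remaining case $p=q=1$ is covered by Theorem~\ref{M2}. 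Thus $M$ is biholomorphic to one of the finitely many listed domains. Recalling that $C^{p,q}\simeq D^{q,p}$ (negate the Hermitian form and permute coordinates), so that in particular $C^{p,p}\simeq D^{p,p}$, it remains only to rule out all candidates other than $D^{p,q}$.

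First I would dispose of the finite-dimensional cases, namely (iii) with $p>1$ and (i) with $p>1$, where Theorem~\ref{autDpq} gives $\aut(D^{p,q})\simeq GU(p,q)$, a locally compact Lie group of real dimension $(p+q)^2+1$. Any candidate whose automorphism group is not locally compact is thereby excluded: this removes $\C^{p+q}$ in every case, and in case (i) also removes $C^{p,1}\simeq\C^*\times\B^{p}$ and $\C\times\B^{p}$, since by Theorems~\ref{autC*B} and~\ref{autCB} their automorphism groups contain infinite-dimensional groups of fibrewise maps with holomorphic coefficients. The remaining competitor $\C^{p+q}\setminus\{0\}$ has automorphism group $GL(p+q,\C)$ by Hartogs extension, of real dimension $2(p+q)^2\neq(p+q)^2+1$, so it is excluded by a dimension count. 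In case (iii) the only other listed domain is $C^{p,p}\simeq D^{p,p}$, so $M\simeq D^{p,p}$, while in case (i) we obtain $M\simeq D^{p,1}$.

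The delicate situation is case (ii), together with the overlap $p=q=1$, where the target is $D^{1,q}\simeq C^{q,1}\simeq\C^*\times\B^{q}$ and the automorphism group is infinite-dimensional. Among the five domains of Theorem~\ref{M1} (with $n=q$) the two finite-dimensional competitors $\C^{q+1}\setminus\{0\}$ and $D^{q,1}$ are removed by local compactness as before, so the entire difficulty is to separate $\C^*\times\B^{q}$ from the two remaining infinite-dimensional domains $\C^{q+1}$ and $\C\times\B^{q}$. I would do this with the center, which is preserved by any isomorphism of topological groups. Writing automorphisms of $\C^*\times\B^{q}$ in the form supplied by Theorem~\ref{autC*B}, the involution $s\colon(w_0,w')\mapsto(-w_0,w')$ (the constant fibrewise multiplier $-1$) commutes with every automorphism, whether its action on the $\C^*$-factor is of the form $w_0\mapsto c(w')w_0$ or $w_0\mapsto c(w')w_0^{-1}$, so $Z(\aut(\C^*\times\B^{q}))$ contains the nontrivial element $s$. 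By contrast, an automorphism of $\C\times\B^{q}$ commuting with all fibrewise translations $(z_0,z')\mapsto(z_0+b(z'),z')$ must itself be a fibrewise translation fixing the base, and commuting in addition with all fibrewise dilations forces it to be the identity, so $Z(\aut(\C\times\B^{q}))=\{e\}$ by Theorem~\ref{autCB}; the identical argument with translations and $GL(q+1,\C)$ gives $Z(\aut(\C^{q+1}))=\{e\}$. Hence $M$ is neither $\C^{q+1}$ nor $\C\times\B^{q}$, and $M\simeq\C^*\times\B^{q}\simeq D^{1,q}$. The case $p=q=1$ is the same computation with $q=1$, via Theorem~\ref{M2}, after discarding $\C^2\setminus\{0\}$ by local compactness.

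The main obstacle is precisely this last separation: the three groups $\aut(\C^{q+1})$, $\aut(\C\times\B^{q})$ and $\aut(\C^*\times\B^{q})$ all have the same crude size (each is non-locally-compact and infinite-dimensional), so no counting argument can tell them apart, and one is forced to isolate a genuinely algebraic feature. The nontriviality of $Z(\aut(\C^*\times\B^{q}))$, which reflects the inversion symmetry $w_0\mapsto w_0^{-1}$ of the $\C^*$-fiber that the $\C$-fiber lacks, is the cleanest such feature, and carefully verifying the three center computations from the explicit automorphism descriptions in Theorems~\ref{autCB} and~\ref{autC*B} is where the real effort lies.
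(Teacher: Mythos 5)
Your overall route is the paper's: transport the standard $GU(p,q)$-action through the assumed isomorphism, invoke Theorems~\ref{M1}--\ref{M3}, and then separate the surviving candidates by invariants of their automorphism groups; and your center computation (the central involution $s\colon(w_0,w')\mapsto(-w_0,w')$ of $\aut(\C^*\times\B^q)$ versus the trivial centers of $\aut(\C\times\B^q)$ and $\aut(\C^{q+1})$) is a correct and genuinely different replacement for the paper's use of connectedness of $\aut(\C\times\B^q)$. However, there is a real gap in how you eliminate $\C^{p+q}\setminus\{0\}$. The claim that $\aut(\C^{n}\setminus\{0\})=GL(n,\C)$ ``by Hartogs extension'' is false: Hartogs only shows that every automorphism of $\C^{n}\setminus\{0\}$ ($n\ge 2$) extends to an automorphism of $\C^{n}$ fixing the origin, and the stabilizer of the origin in $\aut(\C^{n})$ is infinite-dimensional --- for example $(z_1,z_2,\ldots)\mapsto(z_1,z_2+z_1^{2},z_3,\ldots)$ fixes $0$ and is not linear. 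So $\aut(\C^{n}\setminus\{0\})$ is not locally compact, exactly like $\aut(\C^{n})$.

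In cases $(i)$ and $(iii)$ with $p>1$ this error is harmless, since the target $\aut(D^{p,q})\simeq GU(p,q)$ is a Lie group and non-local-compactness of $\aut(\C^{p+q}\setminus\{0\})$ excludes that candidate anyway (this is exactly the paper's argument); your ``dimension count'' merely rests on the wrong group. But in case $(ii)$, and in the subcase $p=q=1$ of $(iii)$, the target $\aut(\C^*\times\B^{q})$ is itself not locally compact, so your stated criterion cannot separate it from $\aut(\C^{q+1}\setminus\{0\})$, and the candidate $\C^{q+1}\setminus\{0\}$ is left standing. You need an additional argument there. Either use the paper's: $U(q+1)$ embeds continuously and injectively into $\aut(\C^{q+1}\setminus\{0\})$ (it acts linearly and effectively) but admits no such embedding into $\aut(D^{1,q})$ by the Isaev--Kruzhilin classification. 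Or extend your own invariant: $Z\bigl(\aut(\C^{q+1}\setminus\{0\})\bigr)$ is trivial, because a central element must commute with $GL(q+1,\C)$ and hence be a scalar $\lambda\,\mathrm{id}$, and commuting with the shear above forces $\lambda=1$, whereas $Z\bigl(\aut(\C^*\times\B^{q})\bigr)$ contains $s\neq e$. Either repair closes the gap, but as written the proof of $(ii)$ and of the case $p=q=1$ is incomplete.
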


\begin{proof}
For $(i)$, the case $p>1$ follows from Theorems~\ref{M1}, \ref{autCB},
\ref{autC*B} and \ref{autDpq}.
Indeed, $\aut(D^{p,1})$ is a linear Lie group by Theorem~\ref{autDpq},
while the automorphism groups of $\C^{p+1}$, $\C^{p+1} \setminus \{0\}$,
$C^{p,1}$ and $\C \times \B^n$ in the list of Theorem~\ref{M1}
are not Lie groups.
Thus $\aut(D^{p,1})$ is neither isomorphic to $\aut(\C^{p+1})$,
$\aut(\C^{p+1} \setminus \{0\})$, $\aut(\C \times \B^p)$
nor $\aut(\C^* \times \B^p)$ as topological groups,
and therefore
characterizes the domain $D^{p,1}$.

For $(ii)$, the case $q>1$
follows from above $(i)$, Theorems~\ref{M1},
\ref{autCB}, \ref{autC*B} and the fact that 
$U(1+q)$ does not acts effectively on $D^{1,q}$ as holomorphic
transformations (see \cite{IK}).
Since $U(1+q)$ acts effectively on $\C^{1+q}$ and $\C^{1+q} \setminus \{0\}$,
$\aut(D^{1,q})$ is neither isomorphic to $\aut(\C^{1+q})$ nor
$\aut(\C^{1+q} \setminus \{0\})$.
Observe that $\aut(D^{1,q})$ is not connected by Theorem~\ref{autC*B},
namely, there exists two
components which include the maps
$(z_0,z_1,\ldots,z_q)$ and $(z_0^{-1},z_1,\ldots,z_q)$, respectively,
while $\aut(\C \times \B^q)$ is connected since $\B^n$ is contractible.
Therefore $\aut(C^{q,1})$ and $\aut(\C \times \B^q)$ are not isomorphic
to each other as topological groups.
Hence, by Theorem~\ref{M1}, $\aut(D^{1,q})$ characterizes the domain $D^{1,q}$.

For $(iii)$, $\aut(D^{p,p})$ is neither isomorphic to 
$\aut(\C^{2p})$ nor
$\aut(\C^{2p} \setminus \{0\})$,
since $U(2p)$ does not act effectively
on $D^{p,p}$ as holomorphic transformations.
If $p=1$, then $\aut(D^{1,1})$ is not isomorphic to $\aut(\C \times \B^1)$
as the proof of $(ii)$ above.
Thus, by Theorem~\ref{M2}, this case is proven.
For $p>1$, the assertion holds by Theorem~\ref{M3}.

\end{proof}

For the case $q=1$, a direct proof of the characterization for $C^{p,1}$
was given in \cite{BKS},
and that for $D^{p,1}$ was given in \cite{MN}.
In the paper \cite{MN}, it is also proven that, if $p \neq q$, $p,q>1$,
then $D^{p,q} \not \simeq C^{p,q}$, while $\aut(D^{p,q}) \simeq \aut(C^{p,q})$.
This means that for the domains $D^{p,q}$, $p,q>1$,
the characterization by their automorphism groups does not hold.

\begin{corollary}
Let M be a connected complex manifold of dimension $n+1$ that is holomorphically separable
and admits a smooth envelope of holomorphy. 
If $\mathrm{Aut}(M)$ is isomorphic to $\mathrm{Aut}(\C \times \B^n)$ $($or $\mathrm{Aut}(\C^* \times \B^n))$
as topological groups,
then $M$ is biholomorphic to $\C \times \B^n$ $(\C^* \times \B^n$, respectively$)$.
\end{corollary}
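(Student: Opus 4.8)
The plan is to reduce the statement to the classification Theorem~\ref{M1} (or to Theorem~\ref{M2} when $n=1$, i.e. $\dim M = 2$) and then to separate $\C\times\B^n$ (respectively $\C^*\times\B^n$) from the remaining members of the list by invariants of $\aut(M)$ as a topological group. First recall from the introduction that $GU(n,1)$ acts effectively and holomorphically on $\C\times\B^n$ and on $\C^*\times\B^n$, so there is an injective homomorphism $GU(n,1)\hookrightarrow\aut(\C\times\B^n)$ (resp. $\aut(\C^*\times\B^n)$). Composing with the assumed topological-group isomorphism $\aut(M)\simeq\aut(\C\times\B^n)$ (resp. $\simeq\aut(\C^*\times\B^n)$) yields an injective homomorphism $\rho_0:GU(n,1)\hookrightarrow\aut(M)$. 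Since $M$ is connected, holomorphically separable, admits a smooth envelope of holomorphy, and has dimension $n+1$, Theorem~\ref{M1} (for $n\geq2$) or Theorem~\ref{M2} (for $n=1$) applies, so $M$ is biholomorphic to one of $\C^{n+1}$, $\C^{n+1}\setminus\{0\}$, $D^{n,1}$, $C^{n,1}\simeq\C^*\times\B^n$, $\C\times\B^n$, where $D^{n,1}$ and $C^{n,1}$ coincide when $n=1$.

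It then remains to eliminate all candidates but the intended one, using only the isomorphism type of $\aut(M)$. I would use three invariants. \emph{(1) Connectedness.} By Theorem~\ref{autCB} the group $\aut(\C\times\B^n)$ is a continuous surjective image of the product of $SU(n,1)$, the additive group $O(\B^n)$, and the multiplicative group $O^*(\B^n)$ of nowhere-vanishing holomorphic functions; each factor is connected ($SU(n,1)$ and the vector group $O(\B^n)$ obviously, and $O^*(\B^n)$ because $\B^n$ is simply connected so every nowhere-vanishing holomorphic function has a holomorphic logarithm), whence $\aut(\C\times\B^n)$ is connected. By contrast, Theorem~\ref{autC*B} shows $\aut(\C^*\times\B^n)=\aut(C^{n,1})$ is disconnected: the alternative $f_0=cz_0^{-1}$ interchanges the two ends of the $\C^*$-factor, whereas $f_0=cz_0$ preserves them, giving a surjective locally constant homomorphism onto $\Z/2\Z$. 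This separates $\C\times\B^n$ from $\C^*\times\B^n$. \emph{(2) Lie versus non-Lie.} For $n\geq2$, Theorem~\ref{autDpq} gives $\aut(D^{n,1})\simeq GU(n,1)$, a finite-dimensional Lie group, while Theorems~\ref{autCB} and \ref{autC*B} exhibit the infinite-dimensional families $a,b$ (resp. $c$) inside $\aut(\C\times\B^n)$ and $\aut(\C^*\times\B^n)$, which are therefore not Lie groups; this rules out $D^{n,1}$. \emph{(3) Maximal compact subgroups.} I will show $U(n+1)$ does not act effectively on $\C\times\B^n$ nor on $\C^*\times\B^n$, while it does act effectively (unitarily) on $\C^{n+1}$ and $\C^{n+1}\setminus\{0\}$. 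Since an effective holomorphic $U(n+1)$-action gives a compact subgroup of the automorphism group isomorphic to $U(n+1)$, this rules out $\C^{n+1}$ and $\C^{n+1}\setminus\{0\}$. Combining (1)--(3) pins down $M\simeq\C\times\B^n$ (resp. $\C^*\times\B^n$); for $n=1$ only (1) and (3) are needed, since the list has four members and $D^{1,1}=C^{1,1}\simeq\C^*\times\D$.

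The hard part is step (3), the non-existence of an effective $U(n+1)$-action. I would prove it by bounding the dimension of maximal compact subgroups from the explicit description in Theorem~\ref{autCB}. Projection to the $\B^n$-factor defines a continuous homomorphism $\pi:\aut(\C\times\B^n)\to SU(n,1)$, $(a,b,g)\mapsto g$, whose kernel is the group of fiberwise maps $(z_0,z')\mapsto(a(z')z_0+b(z'),z')$, isomorphic to $O^*(\B^n)\ltimes O(\B^n)$. For a compact subgroup $K$, the image $\pi(K)$ lies in a maximal compact subgroup of $SU(n,1)$, namely $S(U(n)\times U(1))\simeq U(n)$, of dimension $n^2$. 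If $(a,b,I)\in K$, then $a$ generates a relatively compact subgroup of $O^*(\B^n)$, which forces $|a|\equiv1$ and hence $a$ constant by the maximum principle, while the vector group $O(\B^n)$ has no nontrivial compact subgroup; thus $K\cap\ker\pi$ embeds in $U(1)$. Consequently $\dim K\leq n^2+1<(n+1)^2=\dim U(n+1)$ for every $n\geq1$, so $U(n+1)$ cannot occur as a compact subgroup. The same projection argument applies to $\C^*\times\B^n$ through Theorem~\ref{autC*B}. Alternatively, one may simply invoke the classification of effective $U(n+1)$-actions in \cite{IK}, in whose list neither $\C\times\B^n$ nor $\C^*\times\B^n$ occurs. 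This would complete the proposed proof.
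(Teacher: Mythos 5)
Your proposal is correct and follows essentially the same route as the paper, which proves this corollary by the argument of Corollary~\ref{cor1}: transport the effective $GU(n,1)$-action through the isomorphism, invoke Theorem~\ref{M1} (or \ref{M2}), and then separate the candidates by connectedness, the Lie/non-Lie dichotomy, and the non-existence of an effective $U(n+1)$-action on $\C\times\B^n$ and $\C^*\times\B^n$. Your only real deviation is that for the last point you supply a self-contained compact-subgroup dimension count (via the projection to $\aut(\B^n)$, whose target is more precisely $PU(n,1)$ rather than $SU(n,1)$, though this does not affect the bound) where the paper simply cites the classification of effective $U(m)$-actions in \cite{IK}; both are valid.
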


The proof is similar to that of Corollary \ref{cor1}.
A direct proof of the characterization for $\C \times \B^n$
was given in \cite{BKS0},
and for $\C^* \times \B^n$ in \cite{BKS}
in which the characterization is given for any
direct product of a ball with Euclidean spaces and
punctured Euclidean spaces, respectively.

\section{Preliminary}
\label{prel}

\subsection{Reinhardt domains}
\label{reinh}

In order to establish terminology and notation, we recall some basic facts
about Reinhardt domains,
following Kodama and Shimizu \cite{KS1}, \cite{KS}.
Let $G$ be a Lie group and
$\Omega$ a domain in $\mathbb{C}^{n}$.
Consider a
continuous group homomorphism $\rho : G \longrightarrow \mathrm{Aut}(\Omega)$. 
Then the mapping
\begin{equation*} 
G \times \Omega \ni (g, x) \longmapsto (\rho(g))(x) \in \Omega
\end{equation*}
is continuous, and in fact $C^{\omega}$.
We say that $G$ acts on $\Omega$
as a Lie transformation group through $\rho$. 
Let $T^{n} = (U(1))^{n}$, the $n$-dimensional
torus. 
$T^{n}$ acts 
as a holomorphic automorphism group
on $\mathbb{C}^n$ in the following standard manner:
\begin{equation*} 
T^n \times \mathbb{C}^n \ni (\alpha, z) \longmapsto 
\alpha \cdot z := (\alpha_1 z_1, \ldots, \alpha_n z_n) \in \mathbb{C}^n.
\end{equation*}
A {\it Reinhardt domain} $\Omega$ in $\mathbb{C}^{n}$ is, by definition,  
a domain which is stable under
this standard action of $T^{n}$. Namely, there exists a continuous map
$T^{n} \hookrightarrow \mathrm{Aut}(\Omega)$.
We denote the image of $T^{n}$ of this inclusion map by $T(\Omega)$.

Let $f$ be a holomorphic function on a Reinhardt domain $\Omega$.  
Then $f$ can be expanded uniquely into a Laurent series
\begin{eqnarray*}
f(z) = \sum_{\nu \in \mathbb{Z}^n} a_{\nu}z^{\nu},
\end{eqnarray*}
which converges absolutely and uniformly on any compact set in $\Omega$.
Here $z^{\nu} = z_1^{\nu_1} \cdots z_n^{\nu_n}$ for 
$\nu = (\nu_1, \ldots, \nu_n) \in \mathbb{Z}^n$.

\begin{lemma}\label{LU}
Let $\Omega$ be a Reinhardt domain in $\mathbb{C}^n$ and $n=p+q$. 
If $p>1$ and $U(p)$ acts by linear transformations on $\C^n$
to first $p$ variables
$z_1, \ldots, z_p$, and the action preserves $\Omega$,
then the Laurent series of a holomorphic function on $\Omega$
does not have negative degree terms of $z_1, \ldots, z_p$.
\end{lemma}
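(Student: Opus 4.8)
The plan is to combine the Reinhardt (torus) structure with the extra $U(p)$-symmetry. The key point is that the torus alone fixes only the moduli $|z_i|$, whereas a unitary rotation inside the first $p$ coordinates can move a nonzero coordinate to $0$; this is exactly where the hypothesis $p>1$ enters.

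First I would isolate the individual monomials. Write $f(z)=\sum_{\nu\in\Z^n}a_\nu z^\nu$ for the Laurent expansion and let $d\alpha$ denote the normalized Haar measure on $T^n$. Using the torus-invariance of $\Omega$ I would form, for each $\nu$, the integral $F_\nu(z)=\int_{T^n}\alpha^{-\nu}f(\alpha\cdot z)\,d\alpha$. Since $\alpha\cdot\Omega=\Omega$ and the Laurent series converges uniformly on the compact torus-orbit of any point, term-by-term integration yields $F_\nu=a_\nu z^\nu$, while $F_\nu$ is holomorphic on all of $\Omega$ (holomorphic dependence on $z$ being standard, e.g.\ by differentiating under the integral sign). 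Thus, whenever $a_\nu\neq0$, the monomial $z^\nu$ extends to a function holomorphic on the whole of $\Omega$, not merely on $\Omega\cap\{z_1\cdots z_n\neq0\}$.

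Next I would argue by contradiction. Suppose $a_\nu\neq0$ for some $\nu$ with $\nu_1<0$. Because $\Omega$ is open and nonempty and $\{z_1\cdots z_n\neq0\}$ is dense, $\Omega$ contains a point $\zeta$ with all coordinates nonzero. Since $p>1$, there is a unitary rotation $A\in U(2)\subset U(p)$ acting on the pair $(z_1,z_2)$ with $(A\zeta)_1=0$; then $(A\zeta)_2$ has modulus $\sqrt{|\zeta_1|^2+|\zeta_2|^2}\neq0$ while the remaining coordinates $z_3,\dots,z_n$ are unchanged. By $U(p)$-invariance the point $z^0:=A\zeta$ lies in $\Omega$, it has $z^0_1=0$, and all of $z^0_2,\dots,z^0_n$ are nonzero. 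Approaching $z^0$ inside $\Omega$ along the sequence $z^{(k)}=(1/k,z^0_2,\dots,z^0_n)$, which lies in $\Omega$ for large $k$ by openness and has all coordinates nonzero, the holomorphic function $F_\nu$ takes the values $a_\nu k^{-\nu_1}(z^0_2)^{\nu_2}\cdots(z^0_n)^{\nu_n}$; since $-\nu_1>0$ and the trailing factor is a fixed nonzero constant, $|F_\nu(z^{(k)})|\to\infty$, contradicting the continuity of $F_\nu$ at $z^0\in\Omega$. Hence $\nu_1\geq0$.

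Finally, the same argument applied to any index $i\in\{1,\dots,p\}$, rotating in the $(z_i,z_j)$-plane for some other $j\leq p$ which exists precisely because $p>1$, shows $\nu_i\geq0$ for $1\leq i\leq p$, which is the assertion. The only delicate step is the third one: manufacturing a point of $\Omega$ at which the suspect monomial genuinely blows up. Openness and the density of $\{z_1\cdots z_n\neq0\}$ supply a fully nonzero point, and it is the $U(p)$-rotation, unavailable when $p=1$, that then drives one of the first $p$ coordinates to $0$ while keeping all others away from $0$, so that no cancellation from the other factors can rescue the negative power.
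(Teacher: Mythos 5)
Your proof is correct and follows essentially the same route as the paper's: the paper observes that the $U(p)$-action (with $p>1$) forces $\Omega\cap\{z_i=0\}\neq\emptyset$ for $1\leq i\leq p$ and that the Laurent series is globally defined on the Reinhardt domain $\Omega$, and declares the conclusion trivial. You have simply supplied the details the paper omits, namely the torus-averaging identity $F_\nu=a_\nu z^\nu$ showing each monomial extends holomorphically to all of $\Omega$, and the explicit blow-up at a rotated point with $z_i=0$ and all other coordinates nonzero.
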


\begin{proof}
Since $\Omega \cap \{z_i = 0\} \neq \emptyset$, for $1 \leq i \leq p$,
by the $U(p)$-action on $\Omega$,
and since the Laurent series are globally defined on the Reinhardt domain
$\Omega$,
the lemma is trivial.

\end{proof}

$(\mathbb{C}^*)^n$ acts holomorphically on $\mathbb{C}^n$ as follows: 
\begin{equation*} 
(\mathbb{C}^*)^n \times \mathbb{C}^n \ni 
((\alpha_1, \ldots, \alpha_n), (z_1, \ldots, z_n)) \longmapsto 
(\alpha_1 z_1, \ldots, \alpha_n z_n) \in  \mathbb{C}^n.
\end{equation*}
We denote by $\Pi(\mathbb{C}^n)$ the group of all automorphisms of $\mathbb{C}^n$ of this form.
For a Reinhardt domain $\Omega$ in $\mathbb{C}^n$, we denote by $\Pi(\Omega)$
the subgroup of $\Pi(\mathbb{C}^n)$
consisting of all elements of $\Pi(\mathbb{C}^n)$ leaving $\Omega$ invariant.

\begin{lemma}[\cite{KS1}]\label{KS1}
Let $\Omega$ be a Reinhardt domain in $\mathbb{C}^n$ . 
Then $\Pi(\Omega)$ is the centralizer of $T(\Omega)$ in $\mathrm{Aut}(\Omega)$.
\end{lemma}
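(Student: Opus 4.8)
The plan is to prove the two inclusions separately, writing $N$ for the centralizer of $T(\Omega)$ in $\aut(\Omega)$. The inclusion $\Pi(\Omega)\subseteq N$ is immediate from the definitions: an element of $\Pi(\Omega)$ acts by $z\mapsto(\alpha_1z_1,\ldots,\alpha_nz_n)$ for some $(\alpha_1,\ldots,\alpha_n)\in(\C^*)^n$, and any such diagonal scaling commutes with every diagonal torus map $z\mapsto(t_1z_1,\ldots,t_nz_n)$, $t\in T^n$. Hence every element of $\Pi(\Omega)$ centralizes $T(\Omega)$, so the work lies entirely in the reverse inclusion $N\subseteq\Pi(\Omega)$.

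For that direction I would take $f=(f_1,\ldots,f_n)\in\aut(\Omega)$ commuting with every $\tau_t\in T(\Omega)$, where $\tau_t(z)=(t_1z_1,\ldots,t_nz_n)$. The relation $f\circ\tau_t=\tau_t\circ f$ reads componentwise as the functional equation $f_j(t\cdot z)=t_jf_j(z)$ for all $t\in T^n$, all $z\in\Omega$, and each $j$. The crucial tool is the unique Laurent expansion of a holomorphic function on a Reinhardt domain recalled above: writing $f_j(z)=\sum_{\nu\in\Z^n}a^{(j)}_\nu z^\nu$ and substituting into the functional equation gives $\sum_\nu a^{(j)}_\nu t^\nu z^\nu=\sum_\nu a^{(j)}_\nu t_j z^\nu$. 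By uniqueness of the Laurent coefficients, for every multi-index $\nu$ one obtains $a^{(j)}_\nu\,(t^\nu-t_j)=0$ for all $t\in T^n$.

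Since the monomial characters $t\mapsto t^\nu$ of the torus are pairwise distinct, the identity $t^\nu=t_j$ can hold for all $t\in T^n$ only when $\nu=e_j$, the $j$-th standard basis vector. Therefore $a^{(j)}_\nu=0$ unless $\nu=e_j$, so $f_j(z)=a^{(j)}_{e_j}z_j=:\alpha_jz_j$, and $f(z)=(\alpha_1z_1,\ldots,\alpha_nz_n)$. To conclude $f\in\Pi(\Omega)$ I would then check that each $\alpha_j$ is nonzero: if some $\alpha_j=0$ the map would collapse the $j$-th coordinate and fail to be injective, contradicting $f\in\aut(\Omega)$. Thus $(\alpha_1,\ldots,\alpha_n)\in(\C^*)^n$, so $f\in\Pi(\C^n)$, and since $f$ preserves $\Omega$ by hypothesis, $f\in\Pi(\Omega)$.

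The argument is short once the Laurent machinery is available, and I do not expect a serious obstacle. The only points deserving genuine care are the character-separation step, namely that $t^\nu=t_j$ holding identically on $T^n$ forces $\nu=e_j$ (equivalently, that $t^{\nu-e_j}\equiv 1$ only when $\nu=e_j$), and the final verification that all diagonal entries are nonzero, so that the resulting map is an honest element of $\Pi(\Omega)$ rather than a degenerate monomial map.
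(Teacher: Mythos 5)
Your argument is correct and complete: the functional equation $f_j(t\cdot z)=t_jf_j(z)$, combined with uniqueness of Laurent coefficients on a Reinhardt domain and the fact that $t^{\nu-e_j}\equiv 1$ on $T^n$ forces $\nu=e_j$, gives exactly the diagonal form, and the nonvanishing of the $\alpha_j$ follows as you say from injectivity of $f$ on the open set $\Omega$. The paper itself gives no proof of this lemma --- it is quoted from the reference [KS1] of Kodama and Shimizu --- and your argument is the standard one for this result, so there is nothing to compare against and no gap to report.
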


Next lemma is the key to prove Theorems~\ref{M1}, \ref{M2} and \ref{M3}.

\begin{lemma}[Generalized Standardization Theorem~ \cite{KS}]\label{4}
Let M be a connected complex manifold of dimension $n$ that is holomorphically separable
and admits a smooth envelope of holomorphy, 
and let K be a connected compact Lie group of rank $n$.
Assume that an injective continuous group homomorphism 
$\rho$ of K into $\mathrm{Aut}(\Omega)$ is given.
Then there exists a biholomorphic map F of M onto a Reinhardt domain 
$\Omega$ in $\mathbb{C}^{n}$ such that
\begin{equation*} 
F\rho(K)F^{-1} = U(n_1) \times \cdots \times U(n_s) \subset \mathrm{Aut}(\Omega),
\end{equation*}
where $\sum_{j=1}^s n_j = n$.
\end{lemma}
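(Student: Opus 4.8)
The plan is to reduce the statement to a standardization of the maximal torus and then upgrade to the full group $K$. \textbf{Step 1: pass to the envelope of holomorphy.} Since $M$ admits a smooth envelope of holomorphy $\widehat{M}$, which is a Stein manifold of the same dimension $n$, and holomorphic automorphisms of $M$ extend uniquely to $\widehat{M}$ by the universal property of the envelope, I would first extend the action: each $\rho(g)$ extends to $\widehat{\rho}(g)\in\aut(\widehat{M})$, and $g\mapsto\widehat{\rho}(g)$ is again an injective continuous homomorphism, so $K$ acts effectively and holomorphically on the Stein manifold $\widehat{M}$. Because $\widehat{\rho}(g)$ restricts to $\rho(g)$ on $M$, the subdomain $M\hookrightarrow\widehat{M}$ is $K$-invariant. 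It therefore suffices to standardize the action on $\widehat{M}$: if $\widehat{F}\colon\widehat{M}\to\widehat{\Omega}$ is an equivariant biholomorphism onto a Reinhardt domain, then $F:=\widehat{F}|_M$ maps $M$ onto the $T$-invariant open set $\widehat{F}(M)$, which is Reinhardt.

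\textbf{Step 2: standardize the maximal torus.} Let $T\subset K$ be a maximal torus; since $K$ has rank $n$, $T\cong(U(1))^n$ acts effectively and holomorphically on the Stein $n$-manifold $\widehat{M}$. The key point is that the generic $T$-orbit is totally real of real dimension $n$: if a positive-dimensional subtorus fixed an open set it would fix $\widehat{M}$ by the identity theorem, contradicting effectiveness. Complexifying the torus action to a holomorphic $(\C^*)^n$-action, which is possible since $\widehat{M}$ is Stein, the generic orbit becomes open, so $\widehat{M}$ contains a dense open $(\C^*)^n$-orbit biholomorphic to $(\C^*)^n$. Choosing equivariant coordinates on this orbit and extending them across $\widehat{M}$ — concretely, linearizing $T$ near a fixed point, when one exists, to produce weight-$e_j$ holomorphic functions $w_1,\dots,w_n$, and matching the $T$-weight decomposition of $\mathcal{O}(\widehat{M})$ with a Laurent expansion — I would construct a $T$-equivariant biholomorphism of $\widehat{M}$ onto a Reinhardt domain carrying $T$ to the standard torus $T(\widehat{\Omega})$.

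\textbf{Step 3: upgrade to $K$.} Having standardized $T$, I would study the conjugated group $G:=\widehat{F}\,\widehat{\rho}(K)\,\widehat{F}^{-1}\subset\aut(\widehat{\Omega})$, a connected compact subgroup whose maximal torus is exactly $T(\widehat{\Omega})=(U(1))^n$. By Lemma~\ref{KS1} the centralizer of $T(\widehat{\Omega})$ is $\Pi(\widehat{\Omega})$, which controls how elements of $G$ interact with the torus. Combining this with the rigidity of automorphisms of Reinhardt domains and with the structural fact (of Borel--de Siebenthal type) that a connected compact subgroup of $U(n)$ containing the full diagonal torus is, after a permutation of coordinates, a block-diagonal product $U(n_1)\times\cdots\times U(n_s)$, I would conclude that $G$ acts as such a block product on a partition of the coordinates, giving $\widehat{F}\,\widehat{\rho}(K)\,\widehat{F}^{-1}=U(n_1)\times\cdots\times U(n_s)$ with $\sum_j n_j=n$.

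\textbf{Main obstacle.} The heart of the argument is Step 2: globally standardizing the torus action, that is, producing the equivariant biholomorphism onto a genuine Reinhardt domain rather than merely an abstract toric-type manifold. The delicate points are the global complexification of the $T$-action and the construction of the equivariant coordinate functions, the use of a fixed point to pin down the weights together with a separate treatment of the fixed-point-free case (where $K$ is forced to be abelian, so $K=T$ and the conclusion is immediate), and the proof that the coordinate map is injective and open, which is where holomorphic separability enters decisively. Step 3 is then comparatively formal, resting on the structure theory of compact Lie groups and the description of $\aut(\widehat{\Omega})$ furnished by Lemma~\ref{KS1}.
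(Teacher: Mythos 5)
First, note that the paper does not prove this lemma at all: it is imported verbatim from Kodama--Shimizu \cite{KS} (which in turn rests on the torus-standardization theorem of \cite{KS1} and of Barrett--Bedford--Dadok), so there is no in-paper argument to compare against. Your overall architecture --- extend the action to the envelope of holomorphy, standardize the maximal torus, then upgrade to $K$ --- does match the strategy of the cited works, but several of your intermediate claims are false and the hard step is left essentially unproved.

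The most serious problems are in Step 2. The complexified $(\C^*)^n$-action does not live on $\widehat{M}$: Heinzner-type complexification produces a larger Stein $(\C^*)^n$-manifold containing $\widehat{M}$, and $\widehat{M}$ itself is in general not $(\C^*)^n$-invariant. Consequently the assertion that $\widehat{M}$ contains a dense open orbit biholomorphic to $(\C^*)^n$ is simply wrong: the bidisc $\D^2$ satisfies all hypotheses with $K=T^2$, yet no open subset of $\D^2$ is biholomorphic to $(\C^*)^2$. Likewise, your argument that the generic orbit is totally real only shows it has real dimension $n$; ruling out complex tangencies is precisely where holomorphic separability must enter, and you do not use it there. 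The dichotomy you propose at the end of Step 2 fails on both horns: $U(n)$ acts on $\C^n\setminus\{0\}$ with no $T^n$-fixed point and $K$ nonabelian, and in the abelian case $K=T$ the conclusion is not ``immediate'' --- it is the entire content of the torus standardization theorem. Finally, Step 3 presupposes that the conjugated group $G$ already sits inside $U(n)$ acting linearly before the block-decomposition of compact subgroups containing the diagonal torus can be invoked; establishing that linearity (for instance by analyzing the Laurent expansions of one-parameter subgroups transverse to the torus) is a substantive step that Lemma~\ref{KS1} alone does not supply, since that lemma only controls the centralizer of $T(\Omega)$ and the relevant elements of $G$ do not centralize it.
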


We remark on an action of $U(n,1)$.
In contrast to Lemma \ref{4},
for a non-compact case, $GU(n,1)$ act on $\C \times \B^n$,
which is not linearizable.

Let us consider $U(q) \times U(p)$ as a subgroup of 
$GU(p,q)$ in the natural way.
Then $U(q) \times U(p)$
is a maximal compact subgroup of $GU(p,q)$.
We also identify $U(p)$ and $U(q)$ with $\{E_q\} \times U(p)$
and $U(q) \times \{E_p\}$ in $GU(p,q)$, respectively.
Put the center of the group $U(q) \times U(p)$ in $GU(p,q)$ by
\begin{equation}\label{Tqp}
T_{q,p}=
\left\{
\mathrm{diag}[u_1E_q, u_2E_p]: u_1, u_2 \in U(1) \right\} \subset GU(p,q).
\end{equation}
Here we apply Lemma~\ref{4} to the hypothesis of Theorems~\ref{M1}, \ref{M2}
and \ref{M3}.
Since there exists an injective homomorphism of topological groups
$\rho_0 : GU(p,q) \longrightarrow \mathrm{Aut}(M)$,
and $U(q)\times U(p) \subset GU(p,q)$, 
there is a biholomorphic map $F$ from $M$ onto a Reinhardt domain 
$\Omega$ in $\mathbb{C}^{p+q}$ such that
\begin{equation*} 
F\rho_0(U(q) \times U(p))F^{-1} 
= U(n_1) \times \cdots \times U(n_s) \subset \mathrm{Aut}(\Omega),
\end{equation*}
where $\sum_{j=1}^s n_j = p+q$.
Then, after a permutation of coordinates if we need, we may assume
$F\rho_0(U(q) \times U(p))F^{-1} = U(q) \times U(p)$,
whose action on $\Omega$ is matrix multiplication.
We define an injective homomorphism
\begin{equation*} 
\rho: GU(p,q) \longrightarrow \mathrm{Aut}(\Omega)
\end{equation*}
by
$\rho(g) := F \circ \rho_0(g) \circ F^{-1}$.
We prove in Section $3,4,5$ that $\Omega$ is biholomorphic to  one of  the domains
in the statement of the theorems.

\subsection{Some results on Lie group actions}

We record some results, which will be used in the proof of
the theorems several times.

\begin{lemma}  \label{notsimple}
Let $p, q, k$ be non-negative integers and $p + q \geq 2$. For $k < p + q$,
any Lie group homomorphism
\begin{equation*} 
\rho : SU(p,q) \longrightarrow GL(k, \mathbb{C})
\end{equation*}
is trivial.
\end{lemma}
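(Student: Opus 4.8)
The plan is to pass to the Lie algebra level and exploit the simplicity of $\mathfrak{su}(p,q)$ together with the fact that the minimal dimension of a nontrivial complex representation of $\mathfrak{sl}(p+q,\mathbb{C})$ is exactly $p+q$. Since $SU(p,q)$ is connected, it suffices to prove that the induced Lie algebra homomorphism $d\rho : \mathfrak{su}(p,q) \to \mathfrak{gl}(k,\mathbb{C})$ vanishes identically; a connected Lie group is generated by any neighborhood of the identity, so $d\rho = 0$ forces $\rho$ to be trivial.

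Next I would use that $\mathfrak{su}(p,q)$ is a real simple Lie algebra whenever $p+q \geq 2$. This covers the compact cases $p=0$ or $q=0$, where the algebra is $\mathfrak{su}(p+q)$, as well as the borderline case $p+q=2$, where $\mathfrak{su}(1,1) \cong \mathfrak{sl}(2,\mathbb{R})$ is simple. Consequently $\ker d\rho$ is an ideal, hence either $0$ or all of $\mathfrak{su}(p,q)$, and it remains only to exclude the possibility that $d\rho$ is injective.

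To this end I would complexify. A complex-linear representation of a real Lie algebra $\mathfrak{g}$ on $\mathbb{C}^k$ extends uniquely to a complex-linear representation of its complexification $\mathfrak{g}_{\mathbb{C}}$; here $\mathfrak{su}(p,q)_{\mathbb{C}} \cong \mathfrak{sl}(p+q,\mathbb{C})$. If $d\rho$ were nonzero, its extension $\mathfrak{sl}(p+q,\mathbb{C}) \to \mathfrak{gl}(k,\mathbb{C})$ would be a nonzero homomorphism out of the simple complex Lie algebra $\mathfrak{sl}(p+q,\mathbb{C})$, hence a nontrivial complex representation on $\mathbb{C}^k$. By Weyl's theorem this representation is completely reducible and therefore contains a nontrivial irreducible summand; by the highest-weight classification, every nontrivial irreducible representation of $\mathfrak{sl}(m,\mathbb{C})$ has dimension at least $m$, with the standard representation realizing this minimum. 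Thus $k \geq p+q$, contradicting the hypothesis $k < p+q$, and so $d\rho = 0$. I expect the only genuinely nontrivial ingredient to be this minimal-dimension bound for $\mathfrak{sl}(p+q,\mathbb{C})$, which is a standard consequence of the representation theory of semisimple complex Lie algebras; the remaining steps (connectedness, simplicity, and the complexification of representations) are routine.
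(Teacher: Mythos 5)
Your proof is correct, and it shares the paper's overall skeleton (reduce to $d\rho$, note that $\mathfrak{su}(p,q)\otimes_{\mathbb{R}}\mathbb{C}\cong\mathfrak{sl}(p+q,\mathbb{C})$ is simple, so $d\rho_{\mathbb{C}}$ is either injective or zero), but the decisive step is genuinely different. The paper rules out injectivity by a crude dimension count on the Lie algebras themselves: $\dim_{\mathbb{C}}\mathfrak{sl}(n,\mathbb{C})=n^2-1>k^2=\dim_{\mathbb{C}}\mathfrak{gl}(k,\mathbb{C})$ whenever $k<n$ and $n\geq 2$, so no injective homomorphism can exist. You instead invoke Weyl complete reducibility and the highest-weight classification to get the sharper fact that every nontrivial irreducible representation of $\mathfrak{sl}(m,\mathbb{C})$ has dimension at least $m$, hence $k\geq p+q$, a contradiction. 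The paper's count is more elementary (linear algebra plus simplicity, no representation theory beyond Schur-type reasoning), and it happens to suffice exactly in the range $k<p+q$; your argument requires heavier machinery but is the "right" bound in the sense that it would still apply if one only knew $\mathfrak{gl}(k,\mathbb{C})$ contained no copy of $\mathfrak{sl}(n,\mathbb{C})$ for representation-theoretic rather than dimensional reasons. One small point worth making explicit in your write-up: $d\rho$ is a priori only a real Lie algebra homomorphism into $\mathfrak{gl}(k,\mathbb{C})$, and it is its unique complex-linear extension to $\mathfrak{su}(p,q)\otimes_{\mathbb{R}}\mathbb{C}$ that you then analyze; you say this, but the phrase ``complex-linear representation of a real Lie algebra'' could be read as assuming more than is given. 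Your connectedness argument for deducing triviality of $\rho$ from $d\rho=0$ is fine and is implicit in the paper as well.
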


\begin{proof}
Put $n = p + q$.
It is enough to show that the Lie algebra homomorphism
\begin{equation*} 
d\rho : \mathfrak{su}(p,q) \longrightarrow \mathfrak{gl}(k, \mathbb{C})
\end{equation*}
is trivial.
Consider its complex linear extension
\begin{equation*} 
d\rho_\mathbb{C} : \mathfrak{su}(p,q) \otimes_{\mathbb{R}} \mathbb{C} 
\longrightarrow \mathfrak{gl}(k, \mathbb{C}).
\end{equation*}
Since $\mathfrak{su}(p,q) \otimes_{\mathbb{R}} \mathbb{C} = \mathfrak{sl}(n, \mathbb{C})$
and $\mathfrak{sl}(n, \mathbb{C})$ is a simple Lie algebra,
$d\rho_\mathbb{C}$ is injective or trivial.
On the other hand, $\dim_{\mathbb{C}} \mathfrak{su}(p,q) \otimes_{\mathbb{R}} \mathbb{C} 
= n^2 - 1 > k^2 = \dim_{\mathbb{C}} \mathfrak{gl}(k, \mathbb{C})$.
Thus $d\rho_\mathbb{C}$ must be trivial,
and so is $d\rho$.

\end{proof}

Similarly we have
\begin{lemma}  \label{notsimple2}
Let $p, q, k$ be non-negative integers and $p, q > 0$.
For $p+q > k+1$, any Lie group homomorphism
\begin{equation*} 
\rho : SU(p,q) \longrightarrow PU(k,1)
\end{equation*}
is trivial.
\end{lemma}

\begin{lemma}[\cite {Kutzsche} Proposition 2.3]\label{extG}
Let $G$ be a real Lie group acting by holomorphic transformations on $\C^n$.
Then the action extends to a holomorphic action of the universal
complexification $G^{\C}$ on $\C^n$.
\end{lemma}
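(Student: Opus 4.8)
The plan is to descend to the infinitesimal level, complexify there, and then re\-integrate over $G^{\C}$. Let $\mathfrak{g}$ be the Lie algebra of $G$ and let $\iota : G \to G^{\C}$ be the canonical homomorphism into the universal complexification, whose Lie algebra is $\mathfrak{g}^{\C} = \mathfrak{g}\otimes_{\R}\C$; I treat the identity component of $G$ first and record the component group at the end. A continuous, hence real-analytic, action $\rho : G \to \aut(\C^n)$ by holomorphic transformations differentiates to a Lie algebra homomorphism $\lambda : \mathfrak{g} \to \mathfrak{h}$ (with the usual convention for the infinitesimal action), where $\mathfrak{h}$ denotes the Lie algebra of globally defined holomorphic vector fields on $\C^n$. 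Each $\lambda(\xi)$ is a \emph{complete} holomorphic vector field, its real flow being the one-parameter group $t \mapsto \rho(\exp t\xi)$.

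First I would complexify $\lambda$. Since $\mathfrak{h}$ is a complex vector space whose bracket is $\C$-bilinear, the $\R$-linear homomorphism $\lambda$ extends to a $\C$-linear map $\lambda^{\C} : \mathfrak{g}^{\C} \to \mathfrak{h}$, $\lambda^{\C}(\xi + i\eta) = \lambda(\xi) + i\,\lambda(\eta)$, and comparing with the bracket on $\mathfrak{g}^{\C}$ shows at once that $\lambda^{\C}$ is again a homomorphism of complex Lie algebras. Thus the infinitesimal problem extends for free, and the whole content of the lemma is the \emph{integration} of $\lambda^{\C}$ to a global holomorphic action of the group $G^{\C}$.

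The main obstacle is completeness in the newly created imaginary directions: to integrate $\lambda^{\C}$ one must know that each field $i\,\lambda(\xi)$ is complete on $\C^n$, equivalently that the real flow of $\lambda(\xi)$, which is jointly holomorphic in $(t,z)$ and already defined for all real $t$, extends to a holomorphic flow defined for all \emph{complex} times on all of $\C^n$. I would establish this by holomorphically continuing $s \mapsto \rho(\exp s\xi)(z)$ in the time variable off the real axis, noting that the maximal domain of continuation is open and invariant under the one-parameter group law $\Phi_{s_1+s_2} = \Phi_{s_1}\circ\Phi_{s_2}$; it therefore suffices to produce a fixed horizontal strip $|\mathrm{Im}\,s| < \delta$ on which the flow is defined uniformly in $z$, after which iterating $\Phi_{s} = (\Phi_{s/N})^{N}$ fills out all of $\C$. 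This step, rather than any formal manipulation, is where the real work lies, and it is exactly what uses that the ambient space is $\C^n$ and that $\lambda(\xi)$ comes from a genuine (hence complete) $G$-flow rather than from an arbitrary holomorphic field.

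Granting completeness, I would integrate $\lambda^{\C}$ to a local holomorphic action of $G^{\C}$ near the identity via the exponential and the Baker--Campbell--Hausdorff formula, and then globalize: $G^{\C}$ is generated by $\iota(G)$ together with $\exp(i\,\mathfrak{g})$, and since all the corresponding flows are complete and globally defined, the local action propagates to a genuine holomorphic action $G^{\C} \times \C^n \to \C^n$, which restricts along $\iota$ to $\rho$; the component group is accommodated because $\iota$ and $\rho$ already agree on it, and uniqueness follows from connectedness of the identity component of $G^{\C}$ together with the identity theorem. Equivalently---and this is the cleaner formulation if one is willing to invoke it---the completeness argument is precisely what legitimizes regarding $\aut(\C^n)$ as an (infinite-dimensional) complex Lie group, whereupon the desired extension is immediate from the universal property of $G^{\C}$ applied to the real-analytic homomorphism $\rho$.
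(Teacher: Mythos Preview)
The paper does not prove this lemma at all; it is quoted from Kutzschebauch and used as a black box, so there is no in-paper argument to compare against. Your outline---differentiate to a Lie-algebra map into the holomorphic vector fields, complexify it $\C$-linearly, prove completeness in the imaginary directions, and integrate---is the standard route and is structurally sound.

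The gap is exactly where you flag it. You reduce the $\C$-completeness of each $\lambda(\xi)$ to the existence of a horizontal strip $\{|\mathrm{Im}\,s|<\delta\}$ on which the complex-time flow is defined \emph{uniformly in} $z\in\C^n$, and you correctly observe that once such a strip exists the semigroup law $\Phi_s=(\Phi_{s/N})^N$ propagates it to all of $\C$. But you never produce that uniform $\delta$; you only say that ``this is where the real work lies'' and that it ``uses that the ambient space is $\C^n$''. That sentence is an acknowledgement, not an argument. A priori the strip width available at $z$ can shrink to $0$ as $z\to\infty$, and ruling this out is precisely the nontrivial content of the result (equivalently, that an $\R$-complete holomorphic vector field on $\C^n$ is automatically $\C$-complete). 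Nothing in your write-up explains \emph{how} the ambient space being $\C^n$ forces a uniform strip; until that is supplied, the proof is incomplete at its decisive step. If you want to close the gap rather than cite it, you need to invoke or reprove that $\R$-complete $\Rightarrow$ $\C$-complete statement for holomorphic fields on $\C^n$; the rest of your integration argument then goes through as written.
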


We recall the notion of categorical quotient for an action 
$G\times X\rightarrow X$ of a Lie group $G$ on a complex space $X$,
following Kutzschebauch \cite {Kutzsche}.

\begin{definition}[\cite {Kutzsche} P. 86]\label{cateQ}
A complex space $X/\hspace{-1mm}/G$ together with a $G$-invariant holomorphic map
$\pi_{X} : X\rightarrow X/\hspace{-1mm}/G$ is called categorical quotient for the action
$G\times X\rightarrow X$ if it satisfies the following universality property:

For every holomorphic $G$-invariant map $\psi : X\rightarrow Y$ from $X$ to
some complex $G$-space $Y$,
there exists a unique holomorphic $G$-invariant map
$\tilde{\psi} : X/\hspace{-1mm}/G\rightarrow Y$ such that the diagram
\begin{equation*} 
\begin{array}[c]{ccccc}
X\,\,\,\,\,\,\,\,&\stackrel{\psi}{\longrightarrow} &\,\,\,\,\,\,\,\,Y\\
\scriptstyle{\pi_{X}}\,\,\searrow&&\nearrow\,\,\scriptstyle{\tilde{\psi}}\\
&X/\hspace{-1mm}/G&
\end{array}
\end{equation*} 
commutes.
\end{definition}

As a topological space, when exists, $X/\hspace{-1mm}/G$ is just the topological quotient of
$X$ with respect to the equivalence relation $R$
associated to the algebra $\mathcal{O}^{G}(X)$ of $G$-invariant 
holomorphic functions on $X$:
\begin{equation*}
R=\{(x, y)\in X\times X: f(x)=f(y) \,\,\,\, \forall f\in \mathcal{O}^{G}(x)\}
\end{equation*}
It is known that,
if $X=\C^n$ and $G$ is a complex reductive group or a compact group,
there exists the categorical quotients.
In the proof of Theorems~\ref{M1}, \ref{M2} and \ref{M3},
we use the following lemma with $X=\C^n$ and $G=GL(n,\C)$, $SL(n,\C)$ or $U(n)$.
In these cases, $X/\hspace{-1mm}/G$ are a one-point set.

\begin{lemma}[\cite {Kutzsche} P. 87]\label{linG}
Any holomorphic action of a complex reductive group $G$ on $\C^n$
through $\rho$
with the zero dimensional categorical quotient is linearizable, i.e.
there exists $\gamma \in\aut(\C^n)$ such that
$\gamma^{-1}\rho(G)\gamma\subset GL(n,\C)$.
\end{lemma}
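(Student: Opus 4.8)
The plan is to reconstruct the linearization from the holomorphic slice theorem for reductive groups acting on Stein spaces, exploiting the zero-dimensionality of the quotient twice: once to produce a global fixed point, and once to globalize a local normal form. Throughout, write $\pi : \C^n \to \C^n/\hspace{-1mm}/G$, and let $K \subset G$ be a maximal compact subgroup, so that $G = K^{\C}$.

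First I would record that, since $\C^n$ is connected, the zero-dimensional quotient $\C^n/\hspace{-1mm}/G$ is a single point $\ast$; equivalently, every $G$-invariant holomorphic function on $\C^n$ is constant. By the structure theory of reductive quotients in the Stein category, the unique fiber $\pi^{-1}(\ast) = \C^n$ contains a unique closed $G$-orbit $\mathcal{O} = G/H$, where the stabilizer $H$ is again reductive: distinct closed orbits would be separated by invariants, so uniqueness is exactly the statement that the invariants are constant. Applying the holomorphic slice theorem at a point $x_0 \in \mathcal{O}$ produces a $G$-saturated neighborhood of $\mathcal{O}$ of the form $G \times_H W$, where $W$ is an $H$-stable Stein slice through $x_0$ with $W/\hspace{-1mm}/H$ a point. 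Because this neighborhood is saturated and covers the whole one-point quotient, it is \emph{all} of $\C^n$; thus $\C^n \cong G \times_H W$ $G$-equivariantly.

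The key step is to show $\mathcal{O}$ is a point, i.e.\ $H = G$. Since $W/\hspace{-1mm}/H$ is a point, its unique closed orbit is the $H$-fixed point $x_0$, and the $K$-invariant Kempf--Ness flow (the gradient flow of a $K$-invariant strictly plurisubharmonic exhaustion on the Stein manifold $W$) retracts $W$ onto this closed orbit. Hence $\C^n \cong G \times_H W$ is homotopy equivalent to $G \times_H \{x_0\} = G/H = \mathcal{O}$. As $\C^n$ is contractible, the Stein homogeneous space $\mathcal{O} = G/H$ is contractible; but such a space is homotopy equivalent to the compact homogeneous space $K/(K \cap H)$, which is contractible only when it is a point. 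Therefore $H = G$, and $x_0$ is a genuine $G$-fixed point.

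With the fixed point $x_0$ in hand, I would linearize. The isotropy representation of $G$ on $V := T_{x_0}\C^n \cong \C^n$ is linear, and the holomorphic slice theorem at the fixed point supplies a $G$-equivariant biholomorphism $\Phi$ from a saturated neighborhood of $0$ in $V$ onto a saturated neighborhood of $x_0$ in $\C^n$ with $d\Phi_0 = \mathrm{id}$. The germ identification $\C^n/\hspace{-1mm}/G \cong V/\hspace{-1mm}/G$ shows $V/\hspace{-1mm}/G$ is a point near $0$; since the quotient of a linear action is a cone, this forces $V/\hspace{-1mm}/G$ to be a single point globally, so both saturated neighborhoods exhaust their ambient spaces and $\Phi : V \to \C^n$ is a \emph{global} $G$-biholomorphism intertwining the linear $G$-action on $V$ with the given action. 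Identifying $V \cong \C^n$ linearly and setting $\gamma := \Phi \in \aut(\C^n)$, we get $\gamma^{-1}\rho(G)\gamma \subset GL(n,\C)$, which is the assertion. I expect the main obstacle to be the two global upgrades, namely that the saturated neighborhoods exhaust $\C^n$ and that the slice retracts onto its closed orbit; both rest on the nontrivial holomorphic slice theorem and Kempf--Ness theory (Heinzner, Snow) rather than on any elementary manipulation, and this is exactly the content packaged in the cited result.
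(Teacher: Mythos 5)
The paper does not prove this lemma at all: it is quoted verbatim from Kutzschebauch's survey (\cite{Kutzsche}, p.~87), so there is no in-paper argument to compare yours against. Judged on its own, your reconstruction is correct and is in substance the argument underlying the cited result: Heinzner's holomorphic slice theorem for reductive actions on Stein manifolds, plus the Kempf--Ness/Neeman--Schwarz retraction of a Stein $G$-manifold onto the Kempf--Ness set lying in the unique closed orbit. The two pivots you isolate are exactly the right ones. First, a one-point quotient means the only nonempty saturated open set is the whole space, so the slice model $G\times_H W$ is global and, later, the equivariant chart $V\supset U\to\C^n$ at the fixed point is forced to be all of $V$ because the invariants of a linear reductive action are generated by homogeneous elements and hence are constant globally once they are constant near $0$. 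Second, the topological step forcing $H=G$: here you should name what you are using, namely Mostow's fibration $G/H\cong K\times_{K\cap H}\R^m$ identifying the Stein homogeneous space with a vector bundle over $K/(K\cap H)$, so that contractibility of $\C^n\simeq G/H$ forces the closed manifold $K/(K\cap H)$ to be a point and hence $K\subset H$ and $H\supset K^{\C}=G$. With those two references made explicit (Heinzner for the slice theorem and existence of the Stein quotient, Mostow plus Kempf--Ness for the retraction), your proof is complete and is the standard one; it is neither more elementary nor essentially different from the source the paper cites.
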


In the proof of Theorems~\ref{M3} and \ref{autDpq},
we need the following.

\begin{lemma}\label{condGU}
Let $p, q>0$.
If $f \in GL(p+q,\C)$ preserves $D^{p,q}$,
then we have $f \in GU(p,q)$.
\end{lemma}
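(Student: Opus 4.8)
The plan is to show that any $f\in GL(p+q,\C)$ preserving $D^{p,q}$ must preserve the Hermitian form $\langle z,z\rangle := -|z_1|^2-\cdots-|z_q|^2+|z_{q+1}|^2+\cdots+|z_{p+q}|^2$ up to a positive scalar, which is exactly the condition $f^*Jf=\nu J$ with $\nu>0$ defining $GU(p,q)$. Writing $Q(z)=\langle z,z\rangle = z^*Jz$, the domain is $D^{p,q}=\{Q>0\}$. Since $f$ is linear and preserves the open set $\{Q>0\}$, and since $f$ is a bijection of $\C^{p+q}$, I first observe that $f$ must map the boundary cone $\{Q=0\}$ into itself: a linear isomorphism is a homeomorphism of $\C^{p+q}$, so it carries the topological boundary of $\{Q>0\}$ to the boundary of $f(\{Q>0\})=\{Q>0\}$, and that boundary is precisely the real-analytic hypersurface $\{Q=0\}$. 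Thus $Q(fz)=0$ whenever $Q(z)=0$.

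The key step is then a Nullstellensatz-type argument on Hermitian forms. Set $R(z)=Q(fz)=z^*(f^*Jf)z$; this is again a real-valued Hermitian form on $\C^{p+q}$, associated to the Hermitian matrix $H:=f^*Jf$. We have shown $R$ vanishes on the real hypersurface $\{Q=0\}$. The hard part is to conclude from ``$R=0$ on the zero locus of $Q$'' that $R$ is a real scalar multiple of $Q$; this is the main obstacle, since for a single real polynomial vanishing on a hypersurface one wants an irreducibility/reality statement. Because both $Q$ and $R$ are Hermitian forms, I would argue at the level of matrices: polarizing the identity $R(z)=\lambda Q(z)$ is what we want to prove, so instead I would use that $\{Q=0\}$ is a nondegenerate real quadric and that the space of Hermitian forms vanishing on it is one-dimensional. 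Concretely, after diagonalizing we may normalize $Q$ by $J$; then the condition that the Hermitian form $H$ vanish on $\{z^*Jz=0\}$ forces $H=\lambda J$ for a real $\lambda$, because one may test $H$ against vectors $z=e_k\pm e_\ell$ and $e_k\pm i\,e_\ell$ where $e_k,e_\ell$ are chosen in index blocks of opposite signature (so that $Q(z)=0$), and these relations pin down all off-diagonal entries of $H$ to be zero and all diagonal entries to have the same ratio to the corresponding entries of $J$. This uses $p,q>0$ so that indices of both signs exist.

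It remains to determine the sign of $\lambda$. Since $f$ maps $\{Q>0\}$ onto $\{Q>0\}$ (a bijection preserves the region, not merely its closure), there is some $z_0$ with $Q(z_0)>0$ and $R(z_0)=Q(fz_0)>0$; combined with $R=\lambda Q$ this gives $\lambda>0$. Therefore $f^*Jf=\lambda J$ with $\lambda\in\R_{>0}$, which is precisely the defining relation of $GU(p,q)$, and hence $f\in GU(p,q)$. I expect the matrix-pinning step (showing the space of Hermitian forms vanishing on the quadric is spanned by $J$) to require the most care, but it reduces to elementary evaluations on explicit null vectors and is where the hypotheses $p,q>0$ are used.
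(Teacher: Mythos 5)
Your overall strategy is the same as the paper's: reduce the problem to showing that the Hermitian matrix $H=f^*Jf$ is a positive real multiple of $J$, by using that the associated Hermitian form vanishes on the null cone $\{z^*Jz=0\}$ and then evaluating on explicit null vectors; the positivity of the scalar then comes from testing one interior point. The preliminary observations (a linear bijection preserving $\{Q>0\}$ preserves its boundary $\{Q=0\}$; the sign of $\lambda$ is forced) are fine and match what the paper does implicitly.

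There is, however, a concrete incompleteness in your matrix-pinning step. The test vectors you list, $e_k\pm e_\ell$ and $e_k\pm i e_\ell$ with $k,\ell$ in blocks of opposite signature, only yield the relations $h_{k\ell}=0$ for \emph{cross-block} pairs and $h_{kk}=-h_{\ell\ell}$ for such pairs; they say nothing about the off-diagonal entries $h_{kk'}$ with both indices in the same block (which exist as soon as $p\geq 2$ or $q\geq 2$), since $e_k\pm e_{k'}$ is then not a null vector. To kill those entries you need null vectors with at least three nonzero coordinates, e.g. $e_k+\zeta e_{k'}+\sqrt{2}\,e_\ell$ with $|\zeta|=1$ and $\ell$ in the opposite block, which (after the cross-block entries and diagonal are already determined) forces $\mathrm{Re}(\zeta h_{kk'})=0$ for all such $\zeta$ and hence $h_{kk'}=0$. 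This is exactly the extra family of null vectors the paper uses (its second computation, with two nonzero coordinates in each block). The gap is easily repaired, and the claim that the space of Hermitian forms vanishing on the nondegenerate quadric is spanned by $J$ is correct, but as written your list of test vectors does not establish it.
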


\begin{proof}
Since $f \in GL(p+q,\C)$ preserves $D^{p,q}$, $f$ preserves $C^{p,q}$ and the null cone
\[
\partial D^{p,q} =
\{(z_1, \ldots, z_{p+q}) \in \mathbb{C}^{p+q}:
-|z_{1}|^{2} - \cdots -|z_{q}|^{2} + |z_{1+q}|^{2}+ \cdots + |z_{p+q}|^{2}=0\},
\]
Put
\[
f =
\begin{pmatrix}
A & B \\
C & D 
\end{pmatrix}
\in GL(p+q,\C),
\]
where $A = (a_{ij}) \in M(q,\C)$, $B = (b_{ij}) \in M(q,p,\C)$,
$C = (c_{ij}) \in M(p,q,\C)$
and $D = (d_{ij}) \in M(p,\C)$.
We will show that
\begin{eqnarray}\label{mat1}
\hspace{1.5cm}
\begin{pmatrix}
{}^t\!A & {}^t\!C \\
{}^t\!B & {}^t\!D 
\end{pmatrix}
\begin{pmatrix}
-E_q & 0 \\
0 & E_p
\end{pmatrix}
\begin{pmatrix}
\overline A & \overline B \\
\overline C & \overline D 
\end{pmatrix}
=
\left(\sum_{i=1}^q |a_{i1}|^2 - \sum_{i=1}^p |c_{i1}|^2 \right)
\begin{pmatrix}
-E_q & 0 \\
0 & E_p
\end{pmatrix}
.
\end{eqnarray}
Since $f \in GL(p+q,\C)$ preserves $C^{p,q}$ and $f(1,0.\ldots,0)=(a_{11},\ldots,a_{q1},c_{11},\ldots,c_{p1})$,
it follows that
$\sum_{i=1}^q |a_{i1}|^2 - \sum_{i=1}^p |c_{i1}|^2$
is positive,
and therefore we will find $f \in GU(p,q)$.

The left-hand side of (\ref{mat1}) equals
\[
\begin{pmatrix}
-{}^t\!A\overline A + {}^t\!C\overline C & -{}^t\!A\overline B + {}^t\!C\overline D \\
-{}^t\!B\overline A + {}^t\!D\overline C & -{}^t\!B\overline B + {}^t\!D\overline D
\end{pmatrix}
.
\]
Put coordinates $z = (z_1,\ldots,z_q) \in \C^q$ and
$z' = (z_1,\ldots,z_p) \in \C^p$.
For $(z,z') \in \partial D^{p,q}$,
we have
\[
-|| Az + Bz' ||^{2} + || Cz + Dz' ||^{2}=0,
\]
where $|| \cdot ||$ is a usual euclidean norm.
If $z=(0,\ldots,0,z_j,0,\ldots,0)$, $z'=(0,\ldots,0,z_k,0,\ldots,0)$
and
$-|z_j|^2+|z_k|^2=0$,
then we have
\[
-\sum_{i=1}^q |a_{ij}z_j + b_{ik}z_k|^{2} +
\sum_{i=1}^p |c_{ij}z_j + d_{ik}z_k|^{2} = 0.
\]
Putting $z_k=e^{\sqrt[]{-1}\theta}z_j$, $\theta\in\R$,
we can easily derive from this equation that
\begin{eqnarray*}
&&-\sum_{i=1}^q \left(|a_{ij}|^2 + |b_{ik}|^{2}\right) +
\sum_{i=1}^p \left(|c_{ij}|^2 + |d_{ik}|^{2}\right) = 0,\\
&&-\sum_{i=1}^q a_{ij} \overline{b_{ik}} +
\sum_{i=1}^p c_{ij} \overline{d_{ik}} = 0.
\end{eqnarray*}
The second equation means
$-{}^t\!A\overline B + {}^t\!C\overline D = 0$ and
$-{}^t\!B\overline A + {}^t\!D\overline C = 0$.
The first equation means
\[
-\sum_{i=1}^q |b_{ik}|^{2} +
\sum_{i=1}^p |d_{ik}|^{2}
= \sum_{i=1}^q |a_{ij}|^2 -
\sum_{i=1}^p |c_{ij}|^2.
\]
Therefore all diagonal components of $-{}^t\!A\overline A + {}^t\!C\overline C$
equal
$- \sum_{i=1}^q |a_{i1}|^2 + \sum_{i=1}^p |c_{i1}|^2$,
and those of $-{}^t\!B\overline B + {}^t\!D\overline D$ 
equal $\sum_{i=1}^q |a_{i1}|^2 - \sum_{i=1}^p |c_{i1}|^2$.

If $z=(0,\ldots,0,z_j,0,\ldots,0,z_{j'},0,\ldots,0)$,
$z'=(0,\ldots,0,z_k,0,\ldots,0,z_{k'},0,\ldots,0)$
and\\
$-|z_j|^2-|z_{j'}|^2+|z_k|^2+|z_{k'}|^2=0$,
then we have
\[
-\sum_{i=1}^q |a_{ij}z_j + a_{ij'}z_{j'} + b_{ik}z_k + b_{ik'}z_{k'}|^{2} +
\sum_{i=1}^p |c_{ij}z_j + c_{ij'}z_{j'} + d_{ik}z_k + d_{ik'}z_{k'}|^{2} = 0.
\]
Replacing $z_{j'}, z_k$ and $z_{k'}$ with $e^{\sqrt[]{-1}\theta_{j'}}z_{j'}, e^{\sqrt[]{-1}\theta_k}z_k$ and $e^{\sqrt[]{-1}\theta_{k'}}z_{k'}$,
respectively,
we can derive from this equation that
\begin{eqnarray*}
&&-\sum_{i=1}^q a_{ij} \overline{a_{ij'}} +
\sum_{i=1}^p c_{ij} \overline{c_{ij'}} = 0,\\
&&
-\sum_{i=1}^q b_{ik} \overline{b_{ik'}} +
\sum_{i=1}^p d_{ik} \overline{d_{ik'}} = 0.
\end{eqnarray*}
Therefore all non diagonal components of $-{}^t\!A\overline A + {}^t\!C\overline C$
and $-{}^t\!B\overline B + {}^t\!D\overline D$
vanish.
Thus we obtain the assertion.

\end{proof}

\section{The actions of $GU(n,1)$}
\label{sec:3}
 
Now we prove the following theorem.

\begin{theorem}\label{M1}
Let $M$ be a connected complex manifold of dimension $n+1 > 2$ 
that is holomorphically separable
and admits a smooth envelope of holomorphy.
Assume that there exists an injective homomorphism of topological groups
$\rho_0 : GU(n,1) \longrightarrow \mathrm{Aut}(M)$.
Then $M$ is biholomorphic to one of the domains
$\C^{n+1}$, $\C^{n+1} \setminus \{0\}$, $D^{n,1}$,
$C^{n,1} \simeq \C^* \times \B^n$ or $\C \times \B^n$.
\end{theorem}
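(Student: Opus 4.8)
The plan is to tame the maximal compact subgroup with the standardization theorem and then to pin down the few remaining noncompact directions of $GU(n,1)$ by a direct analysis.

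I start from the standardized situation already prepared in the preliminary section (via Lemma~\ref{4}): an injective $\rho\colon GU(n,1)\to\aut(\Omega)$ onto a Reinhardt domain $\Omega\subset\C^{n+1}$ with $\rho(U(1)\times U(n))$ acting by matrix multiplication. The shape $U(1)\times U(n)$, rather than some other product $U(n_1)\times\cdots\times U(n_s)$, is forced because $n\ge 2$ makes $\mathfrak{su}(n)$ the unique nonabelian simple ideal of $\mathfrak{u}(1)\oplus\mathfrak{u}(n)$, so matching simple ideals and comparing the dimension of the centre leaves only $\{n_1,n_2\}=\{1,n\}$. Writing coordinates $(z_0,z_1,\dots,z_n)$ with $U(1)$ on $z_0$ and $U(n)$ on $(z_1,\dots,z_n)$, this symmetry encodes $\Omega$ entirely in its profile $R=\{(|z_0|^2,\,|z_1|^2+\cdots+|z_n|^2)\}$ in the quarter plane; Lemma~\ref{LU} together with the smooth envelope of holomorphy rigidifies $R$ (globally defined Laurent series, logarithmic convexity). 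The theorem reduces to showing that $R$ is one of five explicit regions.

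Next I would read off the diagonal behaviour of the centre. The centre $\C^*\subset GU(n,1)$ commutes with $U(1)\times U(n)$, hence with $T(\Omega)=T^{n+1}$, so Lemma~\ref{KS1} places $\rho(\C^*)$ inside $\Pi(\Omega)$; commuting in addition with the irreducible $U(n)$ forces
\[
\rho(\lambda)(z_0,z_1,\dots,z_n)=\bigl(\alpha_0(\lambda)\,z_0,\;\alpha(\lambda)z_1,\dots,\alpha(\lambda)z_n\bigr)
\]
for continuous homomorphisms $\alpha_0,\alpha\colon\C^*\to\C^*$, whence $R$ is invariant under $(s,t)\mapsto(|\alpha_0(\lambda)|^2s,\,|\alpha(\lambda)|^2t)$. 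The dichotomy $\alpha\equiv 1$ versus $\alpha$ nontrivial is exactly what separates the ball-type answers $\C\times\B^n$, $\C^*\times\B^n$ from the scaling-type answers $\C^{n+1}$, $\C^{n+1}\setminus\{0\}$, $D^{n,1}$, $C^{n,1}$, and it organizes the final case split.

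The substantive work is the noncompact part, carried by $SU(n,1)$. If its action respects the linear structure, then in the cases $\Omega=\C^{n+1}$ and $\C^{n+1}\setminus\{0\}$ (extending across the origin by Hartogs in the punctured case) I would complexify by Lemma~\ref{extG} to an action of the universal complexification $SL(n+1,\C)$ and linearize by Lemma~\ref{linG}, the categorical quotient reducing to a point (the only $SU(n,1)$-invariant holomorphic functions being constants); the proper profiles are then recognized as $D^{n,1}$ and $C^{n,1}$ using Lemma~\ref{condGU}, which forces a linear automorphism preserving such a domain to lie in $GU(n,1)$. If instead the action is of fractional-linear type, I would match it with the explicit $GU(n,1)$-actions on $\C\times\B^n$ and $\C^*\times\B^n$ from the Introduction, invoking Lemma~\ref{notsimple2} to ensure the ball appearing is the full $\B^n$ and Lemma~\ref{notsimple} to suppress spurious low-dimensional linear factors. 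The hard part --- and the point at which holomorphic separability and the smooth envelope of holomorphy are indispensable --- is establishing this dichotomy itself: standardization controls only the compact subgroup, so one must prove that the noncompact one-parameter subgroups rigidify $R$ to precisely one of the five regions and force the global action to be either linearizable or of ball type, excluding every intermediate profile.
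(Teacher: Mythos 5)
Your setup matches the paper's: standardize the maximal compact via Lemma~\ref{4}, use Lemma~\ref{KS1} to diagonalize the centre as $\rho(e^{2\pi i(s+it)})=\mathrm{diag}[e^{2\pi i\{a_1s+(b_1+ic_1)t\}},e^{2\pi i\{a_2s+(b_2+ic_2)t\}}E_n]$, and split on the behaviour of the moduli $(c_1,c_2)$. But the proposal stops exactly where the proof begins. You write that ``the hard part \dots is establishing this dichotomy itself,'' and indeed that is the content of Section~3 of the paper, none of which you supply. Concretely, three things are missing. First, when $c_1c_2\neq0$ you need that $\lambda=c_2/c_1=a_2/a_1$ is a \emph{nonzero integer} (Lemma~\ref{cla1}): this is proved by showing that the first-degree truncation $Pf$ of the Laurent expansions of $f\in\rho(GU(n,1))$ is multiplicative, hence yields a nontrivial homomorphism $SU(n,1)\to GL(n+1,\C)$, contradicting Lemma~\ref{notsimple}; without integrality of $\lambda$ the normalizing maps $\varpi_\lambda(z)=(z_0,z_0^\lambda z_1,\dots,z_0^\lambda z_n)$ used to identify $C^{n,1}$ and to exclude other profiles are not even defined. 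Second, your ``$\alpha\equiv1$ versus $\alpha$ nontrivial'' dichotomy is really a trichotomy: the case $c_1=0$, $c_2\neq0$ must be shown \emph{not to occur} (Lemma~\ref{3.3}, which needs Lemma~\ref{notsimple2} to kill a putative $SU(n,1)$-action on a disc), and only $c_1\neq0$, $c_2=0$ gives the ball-type answers. Third, and most substantially, the classification of the admissible profiles $R$ is a long orbit-by-orbit analysis of $\partial\Omega$ under $G=\rho(G(U(1)\times U(n)))$: one must exclude $D^-_{a,\lambda}$, $C^-_{a,\lambda}$, the annular domains $\C^*\times\B^n(a,b)$, and the slit/punctured variants $C_1,C_3,C_4$ of Case (I-iii), each via either Lemma~\ref{5.0} (some leading coefficient $a^{(i)}_{(\lambda,0,\dots,0)}$ must be nonzero, which is incompatible with preserving $\C^*\times\{0\}$ or meeting $\{z_0=0\}$) or a linearization contradiction. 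Your proposal gestures at ``excluding every intermediate profile'' but provides no mechanism for doing so.

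Two smaller inaccuracies: Lemma~\ref{condGU} is not used in the proof of Theorem~\ref{M1} to recognize $D^{n,1}$ and $C^{n,1}$ (it enters only in Theorem~\ref{M3} and Theorem~\ref{autDpq}); in Theorem~\ref{M1} these domains are identified by explicit coordinate changes once $\partial\Omega=A_{a,\lambda}$ and $\lambda=1$ (respectively $\lambda$ arbitrary) are established. And the complexification-plus-linearization argument via Lemmas~\ref{extG} and~\ref{linG} is deployed in the paper to \emph{rule out} candidates such as $D'=\C^n$ and $\Omega=\C^*\times\C^n$, not to handle the cases $\Omega=\C^{n+1}$ or $\C^{n+1}\setminus\{0\}$, where $GU(n,1)$ simply acts by matrix multiplication.
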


\begin{proof}
By Lemma~\ref{4} and the comments after that, 
we can assume that $M$ is a Reinhardt domain $\Omega$ in $\C^{n+1}$,
$U(1) \times U(n)$-action on $\Omega \subset \C^{n+1}$ is linear
and $\rho(T^{n+1}) = T^{n+1}$.
We will 
prove that $\Omega$ is biholomorphic to  one of  the five domains
$\C^{n+1}$, $\C^{n+1} \setminus \{0\}$, $D^{n,1}$,
$C^{n,1} \simeq \C^* \times \B^n$ or $\C \times \B^n$.

Put a coordinate $(z_0, z_1, \ldots, z_n)$ of $\mathbb{C}^{n+1}$.
Since $\rho(\mathbb{C}^*)$ is 
commutative with $\rho(T^{n+1}) = T^{n+1} \subset \aut(\Omega)$,
Lemma~\ref{KS1} tells us that $\rho(\mathbb{C}^*) \subset \Pi(\Omega)$,
that is, $\rho(\mathbb{C}^*)$ is represented by diagonal matrices.
Furthermore, $\rho(\mathbb{C}^*)$ commutes with 
$\rho(U(1) \times U(n)) = U(1) \times U(n)$,
so that we have 
\begin{eqnarray}\label{C*1}
\rho\left(e^{2\pi i(s+it)}\right) =
\mathrm{diag}\left[
e^{2\pi i\{a_1s+(b_1+ic_1)t\}}, e^{2\pi i\{a_2s+(b_2+ic_2)t\}}E_n
\right]
\in \rho(\mathbb{C}^*),
\end{eqnarray}
where
$s,t \in \mathbb{R}$, $a_1, a_2 \in \mathbb{Z}, b_1,b_2,c_1,c_2 \in \mathbb{R}$.
Since $\rho$ is injective,
$a_1, a_2$ are
relatively prime
and $(c_1, c_2) \neq (0, 0)$.
Since $T_{1,n}$
is the center of the group $U(1) \times U(n)$ (see (\ref{Tqp})),
we have $\rho(T_{1,n}) = T_{1,n} \subset \mathrm{Aut}(\Omega)$.
Hence $\rho(T_{1,n})$
is described as
\begin{eqnarray}\label{T1n}
\rho\left(
\mathrm{diag}\left[
e^{2\pi is_1}, e^{2\pi is_2}E_n
\right]\right)
=
\mathrm{diag}\left[
e^{2\pi i(as_1+bs_2)}, e^{2\pi i(cs_1+ds_2)}E_n
\right]
\in \rho\left(T_{1,n}\right),
\end{eqnarray}
where
\begin{eqnarray*}
\begin{pmatrix}
a & b\\
c & d
\end{pmatrix}
\in GL(2, \mathbb{Z}),
\end{eqnarray*}
and $s_1, s_2 \in \mathbb{R}$, and we have $a+b=a_1$ and $c+d=a_2$.
To consider the actions of $\mathbb{C}^*$ and $U(1) \times U(n)$ 
on $\Omega$ together,
we put
\begin{equation*} 
G(U(1) \times U(n)) = 
\left\{e^{-2\pi t} \cdot
\mathrm{diag}\left[
u_0, U
\right]
\in GU(n,1) : t \in \mathbb{R}, u_0 \in U(1), U \in U(n)
\right\}.
\end{equation*}
Then we have
\begin{eqnarray*} 
G &:=&\rho(G(U(1) \times U(n))) \\
&{}=&
\left\{
\mathrm{diag}\left[
e^{-2\pi c_1t}u_0, e^{-2\pi c_2t}U
\right]
\in GL(n+1,\mathbb{C}) : t \in \mathbb{R}, u_0 \in U(1), U \in U(n)
\right\}.
\end{eqnarray*}
Note that $G$ is the centralizer
of $T_{1,n} = \rho(T_{1,n})$ in $\rho(GU(n, 1))$.

Let $f =(f_0, f_1, \ldots, f_n) \in \rho(GU(n, 1))$ and consider 
the Laurent series of the components:\\
\begin{eqnarray}\label{eqfi}
f_i(z_0, \ldots, z_n) 
= \sum_{\nu \in \mathbb{Z}^{n+1}} a^{(i)}_{\nu}z^{\nu},
\end{eqnarray}
for $0 \leq i \leq n$.
By Lemma~\ref{LU},
there are no negative degree terms 
of $z_1,\ldots,z_n$ in $(\ref{eqfi})$.
Write $\nu=(\nu_0, \nu')=(\nu_0, \nu_1, \ldots, \nu_n) \in \Z^{n+1}$
and $|\nu'|=\nu_1 + \cdots + \nu_n$.
Let us consider $\nu' \in \mathbb{Z}^n_{\geq 0}$
and put
\begin{eqnarray*}
{\sum_{\nu}}' = \sum_{\nu_0\in \mathbb{Z}, \nu' \in \mathbb{Z}^n_{\geq 0}}\,\,\,
\mathrm{and}\,\,\,\,\,\,\,\,\,\,
(z')^{\nu'} = z_1^{\nu_1}\cdots z_n^{\nu_n},
\end{eqnarray*}
from now on.
When we need to distinguish $\nu$ for $f_i$, $0 \leq i \leq n$,
we write
$\nu = \nu^{(i)} = (\nu_0^{(i)}, \nu_1^{(i)}, \ldots, \nu_n^{(i)})$.

If $f \in \rho(GU(n, 1)) \setminus G$ is a linear map of the form
\begin{equation*}
\begin{pmatrix}
a^{(0)}_{(1,0,\ldots,0)} & 0 & \cdots & 0 &\\
0 & a^{(1)}_{(0,1,0,\ldots,0)} & \cdots & a^{(1)}_{(0,\ldots,0,1)}\\
\vdots  & \vdots &  \ddots & \vdots\\
0 & a^{(n)}_{(0,1,0,\ldots,0)} & \cdots & a^{(n)}_{(0,\ldots,0,1)}
\end{pmatrix}
\in GL(n+1, \mathbb{C}),
\end{equation*}
then $f$ commutes with $\rho(T_{1,n})$, which contradicts $f \notin G$.
Thus 
we have:

\begin{lemma}\label{cl1}
For any $f \in \rho(GU(n, 1)) \setminus G$,
there exists $\nu \in \mathbb{Z}^{n+1}$, $\neq e_1$,
such that $a^{(0)}_{\nu} \neq 0$ in $(\ref{eqfi})$,
or 
there exists
$\nu \in \mathbb{Z}^{n+1}$, $\neq e_2$, 
$\ldots,$ $e_{n+1}$,
such that
$a^{(i)}_{\nu} \neq 0$ in $(\ref{eqfi})$ for some $1 \leq i \leq n$,
where $e_1=(1,0,\ldots,0)$, $e_2=(0,1,0\ldots,0)$,
$\ldots,e_{n+1}=(0,\ldots,0,1)$ are
the natural basis of $\Z^{n+1}$.
\end{lemma}

\medskip
Since $\mathbb{C}^*$ is the center of $GU(n,1)$, 
it follows that, 
for $f \in \rho(GU(n,1))$, $s,t \in \R$,
\begin{eqnarray*}
\rho\left(e^{2\pi i(s+it)}\right) \circ f =
f \circ \rho\left(e^{2\pi i(s+it)}\right).
\end{eqnarray*}
By $(\ref{eqfi})$,
this equation means, for $i=0$,
\begin{eqnarray*}
e^{2\pi i\{a_1s+(b_1+ic_1)t\}} {\sum_{\nu}}' a^{(0)}_{\nu}z^{\nu}
&=& 
{\sum_{\nu}}' a^{(0)}_{\nu}
\left(e^{2\pi i\{a_1s+(b_1+ic_1)t\}}z_0\right)^{\nu_0^{(0)}}
\left(e^{2\pi i\{a_2s+(b_2+ic_2)t\}}z'\right)^{\nu'}\\
&=&
{\sum_{\nu}}' a^{(0)}_{\nu}
e^{2\pi i\{a_1s+(b_1+ic_1)t\}\nu_0^{(0)}} e^{2\pi i\{a_2s+(b_2+ic_2)t\}|\nu'|}
z^{\nu},
\end{eqnarray*}
and, for $1 \leq i \leq n$,
\begin{eqnarray*}
e^{2\pi i\{a_2s+(b_2+ic_2)t\}} {\sum_{\nu}}' a^{(i)}_{\nu}z^{\nu}
&=&
{\sum_{\nu}}' a^{(i)}_{\nu}
\left(e^{2\pi i\{a_1s+(b_1+ic_1)t\}}z_0\right)^{\nu_0^{(i)}}
\left(e^{2\pi i\{a_2s+(b_2+ic_2)t\}}z'\right)^{\nu'}\\
 &=&
{\sum_{\nu}}' a^{(i)}_{\nu}
e^{2\pi i\{a_1s+(b_1+ic_1)t\}\nu_0^{(i)}} e^{2\pi i\{a_2s+(b_2+ic_2)t\}|\nu'|} z^{\nu}.
\end{eqnarray*}
Thus for each $\nu \in \mathbb{Z}^{n+1}$, we have
\begin{eqnarray*}
e^{2\pi i\{a_1s+(b_1+ic_1)t\}} a^{(0)}_{\nu}
= e^{2\pi i\{a_1s+(b_1+ic_1)t\}\nu_0^{(0)}} e^{2\pi i\{a_2s+(b_2+ic_2)t\}|\nu'|} a^{(0)}_{\nu},
\end{eqnarray*}
and, for $1 \leq i \leq n$,
\begin{eqnarray*}
e^{2\pi i\{a_2s+(b_2+ic_2)t\}} a^{(i)}_{\nu}
= e^{2\pi i\{a_1s+(b_1+ic_1)t\}\nu_0^{(i)}} e^{2\pi i\{a_2s+(b_2+ic_2)t\}|\nu'|} a^{(i)}_{\nu}.
\end{eqnarray*}
Therefore, if
$a_{\nu}^{(0)}\neq0$ for 
$\nu=(\nu_0^{(0)}, \nu') =
(\nu_0^{(0)}, \nu_1^{(0)}, \ldots, \nu_n^{(0)}) \in \Z \times \Z^n_{\geq 0}$,
we have
\begin{eqnarray}\label{ac0}
\hspace{-3cm}
\left\{\begin{array}{l}
a_1(\nu_0^{(0)}-1) + a_2(\nu_1^{(0)} + \cdots + \nu_n^{(0)}) = 0,\\
c_1(\nu_0^{(0)}-1) + c_2(\nu_1^{(0)} + \cdots + \nu_n^{(0)}) = 0,
\end{array}
\right.
\end{eqnarray}
and if $a_{\nu}^{(i)} \neq 0$
for $\nu=(\nu_0^{(i)}, \nu')
= (\nu_0^{(i)}, \nu_1^{(i)}, \ldots, \nu_n^{(i)}) \in \Z \times \Z^n_{\geq 0}$,
we have
\begin{eqnarray}\label{aci}
\hspace{-33mm}
\left\{\begin{array}{l}
a_1\nu_0^{(i)} + a_2(\nu_1^{(i)} + \cdots + \nu_n^{(i)}-1) = 0,\\
c_1\nu_0^{(i)} + c_2(\nu_1^{(i)} + \cdots + \nu_n^{(i)}-1) = 0,
\end{array}
\right.
\end{eqnarray}
for $1 \leq i \leq n$.

\bigskip
First, we consider the case $c_1c_2 = 0$.

\begin{lemma}\label{c_20}
If $c_1 \neq 0$, $c_2 = 0$,
then $\Omega$ is biholomorphic to $\C^* \times \B^n$ or $\C \times \B^n$.
\end{lemma}

\begin{proof}
In this case,
$\Omega \subset  \C^{n+1}$ can be written
of the form $(\C \times D) \cup  (\C^* \times D')$,
where $D$ and $D'$ are open sets in $\C^n$.
Indeed, 
$\Omega = (\Omega \cap \{z_0 = 0\}) \cup (\Omega \cap \{z_0 \neq 0\})$.
Then $\{0\} \times D := \Omega \cap \{z_0 = 0\} \subset \Omega$
implies $\C \times D \subset \Omega$ 
by $G$-action on $\Omega$.
On the other hand,
$\Omega \cap \{z_0 \neq 0\} = \C^* \times D'$
for some open set $D' \subset \C^n$ 
by the $G$-action.
Thus, $\Omega = (\C \times D) \cup  (\C^* \times D')$.
Furthermore $D \subset D'$, and
since $\Omega$ is connected, $D'$ is a connected open set in $\subset \C^n$.

We shall prove now that $D'$ is biholomorphic to $\B^n$
and $D$ coincides with $D'$ or $\emptyset$.
For $f = (f_0, \ldots, f_n) \in \rho(GU(n,1))$, the functions
$f_i$, for $1 \leq i < n$, do not depend on $z_0$ by (\ref{aci}),
therefore $GU(n,1)$ acts on $D$ and $D'$.
Since $U(n)$ acts linearly on $D'$,
the domain $D'$ must be one of the open sets $\C^n$, 
$\C^n \setminus \{0\}$, $\C^n \setminus \overline{\B^n_r}$,
$\B^n_r$, $\B^n_r \setminus \{0\}$ and $\B^n_r \setminus \overline{\B^n_{r'}}$,
where $\B^n_r = \{ z \in \C^n : |z| < r\}$ and $r>r'>0$.
Thus we have a topological group homomorphism from $GU(n,1)$ to 
one of the topological groups
$\aut(\C^n)$,
$\aut(\C^n \setminus \{0\})$, $\aut(\C^n \setminus \overline{\B^n_r})$,
$\aut(\B^n_r)$, $\aut(\B^n_r \setminus \{0\})$
and 
$\aut(\B^n_r \setminus \overline{\B^n_{r'}})$.

We now prove $D'\neq\C^n$.
Suppose the contrary were the case.
Then, by Lemma~\ref{extG},
the $U(n,1)$-action extends to a holomorphic action of $GL(n+1,\C)$.
The categorical quotient $\C^n/\hspace{-1mm}/GL(n+1,\C)$ is then one point
(see the sentence before Lemma~\ref{linG}).
By Lemma~\ref{linG}, the $GL(n+1,\C)$-action is linearizable.
However, the restriction of this action to $SU(n,1)$ is non-trivial
since $SU(n)\subset \rho(SU(n,1))$, $n>1$, acts non-trivially on $\C^n$.
This contradicts Lemma~\ref{notsimple}.
Furthermore, $D'\neq \C^n \setminus \{0\}$,
$\C^n \setminus \overline{\B^n}$,
since the $U(n,1)$-action extends to the action on $\C^n$
by the Hartogs extension theorem.
These cases come down to the $\C^n$-case.

$SU(n,1)$ can neither act non-trivially on $\B^n_r \setminus \{0\}$ nor
$\B^n_r \setminus \overline{\B^n_{r'}}$.
Indeed, if $SU(n,1)$ acts,
then $\rho(SU(n,1)) \subset U(n)$.
Here we used $\aut(\B^n_r \setminus \{0\})=\aut(\B^n_r \setminus \overline{\B^n_{r'}})=U(n)$.
By Lemma~\ref{notsimple}, this action is trivial.
Thus $D'\neq\B^n_r \setminus \{0\}$, $\B^n_r \setminus \overline{\B^n_{r'}}$,
since $SU(n) \subset \rho(SU(n,1))$ acts non-trivially on $D'$.

Clearly, $\B_r^n$ is biholomorphic to the unit ball $\B^n$,
and therefore,
$GU(n,1)$ acts on $\C\times\B_r^n$ as fractional
linear transformations (see Introduction).
Consequently, we see that $D' = \B_r^n$ for some $r>0$.

If $D = \emptyset$, then $\Omega = \C^* \times \B_r^n$.
If $D \neq \emptyset$, then $D = D' = \B_r^n$.
Indeed,
by $U(n)$-action,
$D$ is an union of certain sets of the forms $\B^n_{r'}$, $\B^n_s \setminus \overline{\B^n_{s'}}$
and $\B^n_s \setminus \{0\}$,
where $r'\leq r$, $s'<s\leq r$.
If $D \subsetneq D'$,
the $GU(n,1)$-action does not preserve $D$,
since $U(n) \subsetneq \rho(GU(n,1))|_D \subset PU(n,1)$ as above.
Here $\rho(GU(n,1))|_D$ is the induced action on $D'$ of $\rho(GU(n,1))$.
Thus $D = D' = \B_r^n$,
and therefore $\Omega = \C \times \B_r^n$.
This proves the lemma.
\end{proof}

\begin{remark}\label{rem0}
We remark here that this Lemma~\ref{c_20} is also valid when $n=1$.
It should be checked that $D'\neq \C$, $\C^*$, $\C\setminus  \overline{\D_r}$,
$\D_r \setminus \{0\}$ and $\D_r \setminus \overline{\D_r'}$,
where $\D_r = \{ z \in \C : |z|<r\}$ and $r>r'>0$.
This can be proven as the proof of the following lemma.
\end{remark}

\begin{lemma}\label{3.3}
The case $c_1 = 0$ and $c_2 \neq 0$ does not occur.
\end{lemma}

\begin{proof}
If $a_1 \neq 0$, then by (\ref{ac0}) and (\ref{aci}), $\nu_0^{(0)} = 1$, 
$\nu^{(0)}_1+\cdots+\nu^{(0)}_n=0$,
$\nu_0^{(i)} = 0$ and
$\nu^{(i)}_1+\cdots+\nu^{(i)}_n=1$ for $1 \leq i \leq n$. 
However this is a contradiction to Lemma \ref{cl1}.

We consider the case $a_1 = 0$.
As the proof of Lemma \ref{c_20}, $\Omega \subset  \mathbb{C}^{n+1}$ can be written
of the form $(D'' \times \mathbb{C}^n) \cup  (D''' \times (\mathbb{C}^n \setminus \{0\}) )$
by the $G$-action on $\Omega$,
where $D''$ and $D'''$ are open sets in $\mathbb{C}$, $D''\subset D'''$, and
since $\Omega$ is connected, $D'''$ is a connected open set in $\C$.
On the other hand,
the function $f_0$ of $f = (f_0, \ldots, f_n) \in \rho(GU(n,1))$ does
not depend on the variables $(z_1, \ldots, z_n)$ by the second equation of (\ref{ac0}).
Hence $f_0$ is an automorphism of $D'''$.
Since $U(1)$ acts linearly on $D'''$,
the domain $D'''$ must be one of the open sets
$\C$, $\C^*$, $\C \setminus \overline{\D_r}$, 
$\D_r$, $\D_r \setminus \{0\}$ and $\D_r \setminus \overline{\D_r'}$, 
where $\D_r = \{ z \in \C : |z|<r\}$ and $r>r'>0$.

By $a_1 = 0$, we have $b=-a$ in (\ref{T1n}), and therefore, if $s_2 = -s_1/n$,
we have $as_1+bs_2 \not\equiv k \in \Z$ for $s_1\in \R$.
Note that
\begin{eqnarray}
\mathrm{diag}\left[
e^{2\pi is_1}, e^{-2\pi is_1/n}E_n
\right]
\in S(U(1) \times U(n)).
\end{eqnarray}
Thus, by (\ref{T1n}), 
we have a nontrivial $SU(n,1)$-action on the domain $D'''$ in $\C$.

We now prove $D'''\neq\C$.
By Lemma~\ref{extG},
the $SU(n,1)$-action extends to a holomorphic action of $SL(n+1,\C)$.
The categorical quotient $\C^n/\hspace{-1mm}/SL(n+1,\C)$ is then one point
(see the sentence before Lemma~\ref{linG}).
By Lemma~\ref{linG}, the $SL(n+1,\C)$-action is linearizable.
However, the restriction of this action to $SU(n,1)$ is non-trivial.
This contradicts Lemma~\ref{notsimple}.

Next we prove $D'''\neq\C^*$.
Assume the contrary.
Since $\aut(\C^*)=\{az^{\pm 1}:a\in \C^* \}\simeq \C^* \rtimes \{\pm 1\}$
and $SU(n,1)$ is connected Lie group,
we have a non-trivial Lie group homomorphism from $SU(n,1)$
to $\C^*$. However this contradicts Lemma~\ref{notsimple}.

$SU(n,1)$ can not act non-trivially on $\C \setminus \overline{\D_r}$,
$\D_r$, $\D_r \setminus \{0\}$ and $\D_r \setminus \overline{\D_r'}$.
Indeed, the automorphism groups of these domains are isomorphic to $U(1)$.
If $SU(n,1)$ acts,
then $\rho(SU(n,1)) \subset U(1)$.
By Lemma~\ref{notsimple}, this action is trivial.
Since $SU(n,1)$ acts nontrivially on $D'''$, we see that $D'''\neq \C \setminus \overline{\D_r}$,
$\D_r$, $\D_r \setminus \{0\}$ and $\D_r \setminus \overline{\D_r'}$.

Finally, we prove $D'''\neq \D_r$.
Assume the contrary. Then we have
a non-trivial Lie group homomorphism from $SU(n,1)$
to $\aut(\D_r) = PU(1,1)$.
However this contradicts Lemma~\ref{notsimple2}, since$n>1$.
We have proven Lemma~\ref{3.3}.
\end{proof}

\bigskip
We consider, henceforce, the case $c_1c_2 \neq 0$.

\begin{lemma}\label{cla1}
If $c_1c_2 \neq 0$, then $\lambda := c_2/c_1 = a_2/a_1 \in \Z  \setminus \{0\}$.
\end{lemma}

\begin{proof}
Take $f \in \rho(GU(n, 1)) \setminus G$
with the Laurent expansions (\ref{eqfi}).
Recall Lemma~\ref{cl1}.
If $a_{\nu}^{(0)}\neq0$ for some 
$\nu=(\nu_0^{(0)}, \nu_1^{(0)}, \ldots, \nu_n^{(0)}) \in \Z \times \Z^n_{\geq 0}$,
$\neq (1,0,\ldots,0)$,
then it follows form
(\ref{ac0}) and the assumption $c_1c_2 \neq 0$
that $\nu_0^{(0)}-1 \neq 0$ and $\nu_1^{(0)} + \cdots + \nu_n^{(0)} \neq 0$.
Hence $c_2/c_1 = a_2/a_1 \in \mathbb{Q}$ and $(a_1,a_2) \neq (\pm1,0), (0,\pm1)$
by (\ref{ac0}).
On the other hand,
if $a_{\nu}^{(i)}\neq0$ for some 
$1 \leq i \leq n$ and
$\nu=(\nu_0^{(i)}, \nu_1^{(i)}, \ldots, \nu_n^{(i)}) \in \Z \times \Z^n_{\geq 0}$,
$\neq (0,1,0 \ldots,0)$, 
$\ldots,$ $(0,0 \ldots,0,1)$,
then it follows from (\ref{aci}) and the assumption $c_1c_2 \neq 0$
that
$\nu_0^{(i)} \neq 0$ and $\nu_1^{(i)} + \cdots + \nu_n^{(i)} -1 \neq 0$.
In this case, we also obtain $c_2/c_1 = a_2/a_1 \in \mathbb{Q}$ and
$(a_1,a_2) \neq (\pm1,0), (0,\pm1)$ by (\ref{aci}).
Consequently, we have
\begin{eqnarray*}
\lambda= a_2/a_1 = c_2/c_1 \in \mathbb{Q} \setminus \{0\}.
\end{eqnarray*}

\medskip

We now prove that $\lambda$ is a nonzero integer.
For the purpose, we assume $\lambda \notin \mathbb{Z}$, that is, $a_1 \neq \pm1$.
First we consider the case $\lambda < 0$.
Since 
$\nu_1^{(i)} + \cdots + \nu_n^{(i)} \geq 0$ for $0 \leq i \leq n$
and $a_1, a_2$ are relatively prime (see (\ref{C*1})),
we have, by (\ref{ac0}),
\begin{eqnarray*}
\nu_0^{(0)} = 1+k|a_2| \geq 1
\,\,\,\,\,\mathrm{and}\,\,\,\,\, 
\nu_1^{(0)} + \cdots + \nu_n^{(0)} = k|a_1| \geq 0,
\end{eqnarray*}
where $k \in \Z_{\geq 0}$,
and, by (\ref{aci}), for $1 \leq i \leq n$,
\begin{eqnarray*}
\nu_0^{(i)} = l|a_2| \geq 0
\,\,\,\,\,\mathrm{and}\,\,\,\,\,
\nu_1^{(i)} + \cdots + \nu_n^{(i)} = 1+l|a_1| \geq 1,
\end{eqnarray*}
where $l \in \Z_{\geq 0}$.
Hence, the Laurent series of the components of $f \in \rho(GU(n,1))$ are
\begin{eqnarray}\label{form0}
f_0(z_0, \ldots, z_n) 
= \sum_{k=0}^{\infty} \, {\sum_{|\nu'|=k|a_1|}}^{\hspace{-10pt}\prime}\,\,\,\,\,
a^{(0)}_{\nu'}z_0^{1+k|a_2|}(z')^{\nu'}
\end{eqnarray}
and
\begin{eqnarray}\label{formi}
f_i(z_0, \ldots, z_n) 
= \sum_{k=0}^{\infty} \, {\sum_{|\nu'|=1+k|a_1|}}^{{\hspace{-14pt}\prime}}\,\,\,\,\,
a^{(i)}_{\nu'}z_0^{k|a_2|}(z')^{\nu'}
\end{eqnarray}
for $1 \leq i \leq n$.
Here we have written $a^{(0)}_{\nu'}=a^{(0)}_{(1+k|a_2|,\nu')}$
and $a^{(i)}_{\nu'}=a^{(i)}_{(k|a_2|,\nu')}$, for $1\leq i \leq n$,
and we will use this notation if it is clear from the context what it means.
We focus on the first degree terms of the Laurent expansions.
We put
\begin{eqnarray}\label{Pf}
Pf (z) :=
\left(a^{(0)}_{(1,0,\ldots,0)}z_0, {\sum_{|\nu'|=1}}^{{\hspace{-4pt}\prime}}\,\,\,\, a^{(1)}_{\nu'}(z')^{\nu'}, \ldots,
{\sum_{|\nu'|=1}}^{{\hspace{-4pt}\prime}}\,\,\,\, a^{(n)}_{\nu'}(z')^{\nu'}\right).
\end{eqnarray}
As a matrix we can write
\begin{eqnarray*}
Pf = 
\begin{pmatrix}
a^{(0)}_{(1,0,\ldots,0)} & 0 & \cdots & 0 &\\
0 & a^{(1)}_{(0,1,0,\ldots,0)} & \cdots & a^{(1)}_{(0,\ldots,0,1)}\\
 \vdots  & \vdots &  \ddots & \vdots\\
0 & a^{(n)}_{(0,1,0,\ldots,0)} & \cdots & a^{(n)}_{(0,\ldots,0,1)}
\end{pmatrix}.
\end{eqnarray*}
Then it follows from (\ref{form0}) and (\ref{formi}) that
\begin{eqnarray*}\label{}
P(f \circ h) = Pf \circ Ph, \,\,\,{\rm and}\,\,\, P{\rm id} = {\rm id},
\end{eqnarray*}
where $h \in \rho(GU(n,1))$,
and therefore 
\begin{eqnarray*}\label{}
Pf  \in GL(n+1, \mathbb{C})
\end{eqnarray*}
since $f$ is an automorphism.
Hence we have a representation of $GU(n,1)$ given by
\begin{eqnarray*}
GU(n,1) \ni g \longmapsto Pf \in GL(n+1,\mathbb{C}),
\end{eqnarray*}
where $f = \rho(g)$.
The restriction of this representation to the simple Lie group $SU(n,1)$
is nontrivial since $\rho(U(1) \times U(n)) = U(1) \times U(n)$.
However this contradicts Lemma~\ref{notsimple}.
Thus it does not occur that $\lambda$ is a negative non-integer.

\medskip
Next we consider the case $\lambda >0$ and $\lambda \not \in \mathbb{Z}$.
Since $\nu_1^{(i)} + \cdots + \nu_n^{(i)} \geq 0$ for $0 \leq i \leq n$
and $a_1, a_2$ are relatively prime,
we have, by (\ref{ac0}), 
\begin{eqnarray*}
\nu_0^{(0)} = 1-k|a_2| \leq 1
\,\,\,\,\,\mathrm{and}\,\,\,\,\, 
\nu_1^{(0)} + \cdots + \nu_n^{(0)} = k|a_1| \geq 0,
\end{eqnarray*}
where $k \in \Z_{\geq 0}$,
and, by (\ref{aci}), for $1 \leq i \leq n$,
\begin{eqnarray*}
\nu_0^{(i)} = -l|a_2| \leq 0
\,\,\,\,\,\mathrm{and}\,\,\,\,\,
\nu_1^{(i)} + \cdots + \nu_n^{(i)} = 1+l|a_1| \geq 1,
\end{eqnarray*}
where $l \in \Z_{\geq 0}$.
Hence, the Laurent series of components of $f \in \rho(GU(n,1))$ are
\begin{eqnarray*}
f_0(z_0, \ldots, z_n) 
= \sum_{k=0}^{\infty} \, {\sum_{|\nu'|=k|a_1|}}^{\hspace{-10pt}\prime}\,\,\,\,\,
a^{(0)}_{\nu'}z_0^{1-k|a_2|}(z')^{\nu'}
\end{eqnarray*}
and
\begin{eqnarray*}
f_i(z_0, \ldots, z_n) 
= \sum_{k=0}^{\infty} \, {\sum_{|\nu'|=1+k|a_1|}}^{\hspace{-14pt}\prime}\,\,\, 
a^{(i)}_{\nu'}z_0^{-k|a_2|}(z')^{\nu'}
\end{eqnarray*}
for $1\leq i \leq n$.
We claim that $a^{(0)}_{(1,0,\ldots,0)} \neq 0$.
Suppose the contrary.
$f$ and $f^{-1}$ are defined near the points $(z_0,0,\ldots,0)$ for fixed $z_0$,
and therefore 1-to-1 near that point.
However, since $a^{(0)}_{(1,0,\ldots,0)} = 0$,
$f(z_0, 0, \ldots, 0) = (0, \ldots, 0) \in \mathbb{C}^{n+1}$ for each $z_0$,
a contradiction.
Take another $h \in \rho(GU(n,1))$
and put the Laurent series of its components: 
\begin{eqnarray*}
h_0(z_0, \ldots, z_n) 
= \sum_{k=0}^{\infty} \, {\sum_{|\nu'|=k|a_1|}}^{\hspace{-10pt}\prime}\,\,\,
b^{(0)}_{\nu'}z_0^{1-k|a_2|}(z')^{\nu'}
\end{eqnarray*}
and
\begin{eqnarray*}
h_i(z_0, \ldots, z_n)
= \sum_{k=0}^{\infty} \, {\sum_{|\nu'|=1+k|a_1|}}^{\hspace{-14pt}\prime}\,\,\, 
b^{(i)}_{\nu'}z_0^{-k|a_2|}(z')^{\nu'}
\end{eqnarray*}
for $1\leq i \leq n$. We have $b^{(0)}_{(1,0,\ldots,0)} \neq 0$ as above.
We consider the first degree terms of $f \circ h$.
For the first component
\begin{eqnarray*}
f_0(h_0, \ldots, h_n) 
&=& a^{(0)}_{(1,0,\ldots,0)}h_0 + 
\sum_{k=1}^{\infty} \, {\sum_{|\nu'|=k|a_1|}}^{\hspace{-10pt}\prime}\,\,\,\,\,\,
a^{(0)}_{\nu'}h_0^{1-k|a_2|}(h')^{\nu'},
\end{eqnarray*}
where $h=(h_1,\ldots,h_n)$.
Then, for $k>0$,
\begin{eqnarray*}
h_0(z)^{1-k|a_2|} &=& \left(\sum_{l=0}^{\infty} \, {\sum_{|\nu'|=l|a_1|}}^{\hspace{-8pt}\prime}
b^{(0)}_{\nu'}z_0^{1-l|a_2|}(z')^{\nu'}\right)^{1-k|a_2|}
=
z_0^{1-k|a_2|} \left(\sum_{l=0}^{\infty} \, {\sum_{|\nu'|=l|a_1|}}^{\hspace{-8pt}\prime} 
b^{(0)}_{\nu'}z_0^{-l|a_2|}(z')^{\nu'}\right)^{1-k|a_2|}\\
&=&
\left(b^{(0)}_{(1,0,\ldots,0)}z_0\right)^{1-k|a_2|} \left(1 +
\frac{1-k|a_2|}{b^{(0)}_{(1,0,\ldots,0)}} z_0^{-|a_2|} {\sum_{|\nu'|=|a_1|}}^{\hspace{-8pt}\prime}b^{(0)}_{\nu'}(z')^{\nu'}
+ \cdots \right)
\end{eqnarray*}
Thus
$h_0(z)^{1-k|a_2|}$ has the maximum degree of $z_0$ at most $1 - k|a_2| < 1$
and has the minimum degree of $z'$ at least $|a_1| > 1$
in its Laurent expansion.
For $|\nu'|=k|a_1|$ and $k>0$,
$(h')^{\nu'}$ has the maximum degree of $z_0$ at most $-|a_2| < 0$
and
the first degree terms of $z'$ are with coefficients of a negative degree $z_0$ term
in its Laurent expansion.
Hence the first degree term of Laurent expansion of $f_0(h_0, \ldots, h_n)$ 
is $a^{(0)}_{(1,0,\ldots,0)}b^{(0)}_{(1,0,\ldots,0)}z_0$.

Similarly, consider
\begin{equation*}
f_i(h_0, \ldots, h_n) 
= {\sum_{|\nu'|=1}}^{\hspace{-5pt}\prime}\,\,\, a^{(i)}_{\nu'}(h')^{\nu'} + 
\sum_{k=1}^{\infty} \, 
{\sum_{|\nu'|=1+k|a_1|}}^{\hspace{-14pt}\prime}\,\,\,a^{(i)}_{\nu'}h_0^{-k|a_2|}(h')^{\nu'}
\end{equation*}
for $1\leq i \leq n$.
Then, for $k>0$,
\begin{eqnarray*}
h_0^{-k|a_2|}
&=& \left(b^{(0)}_{(1,0,\ldots,0)}z_0\right)^{-k|a_2|} \left(1 + 
\frac{-k|a_2|}{b^{(0)}_{(1,0,\ldots,0)}} z_0^{-|a_2|} {\sum_{|\nu'|=|a_1|}}^{\hspace{-8pt}\prime}b^{(0)}_{\nu'}(z')^{\nu'}
+ \cdots \right).
\end{eqnarray*}
Thus $h_0^{-k|a_2|}$ has the maximum degree of $z_0$ at most $- k|a_2| < 0$
 and has the minimum degree of $z'$ at least $|a_1| > 1$
 in its Laurent expansion.
For $|\nu'|=1+k|a_1|$ and $k>0$, $(h')^{\nu'}$
has the maximum degree of $z_0$ at most $-|a_2| < 0$
and the first degree terms of $z'$ are with coefficients of negative degree $z_0$ term 
in its Laurent expansion.
Hence the first degree terms of the Laurent expansions of $f_i(h_0, \ldots, h_n)$ 
is 
\begin{eqnarray*}
\sum_{j=1}^n {\sum_{|\nu'|=1}}^{\hspace{-5pt}\prime}\,\,\,\, a^{(i)}_{e'_j}b^{(j)}_{\nu'}(z')^{\nu'},
\end{eqnarray*}
where $e'_1=(1,0,\ldots,0)$, $\ldots$, $e'_n=(0,\ldots,0,1) \in \Z^n$.
We put $Pf$ as (\ref{Pf}).
Consequently, 
\begin{eqnarray*}\label{}
P(f \circ h) = Pf \circ Ph, \,\,\,{\rm and}\,\,\, P{\rm id} = {\rm id},
\end{eqnarray*}
and therefore 
\begin{eqnarray*}\label{}
Pf  \in GL(n+1, \mathbb{C})
\end{eqnarray*}
since $f$ is an automorphism.
Then the same argument as that in previous case, $\lambda < 0$, shows that
this is a contradiction.
Thus it does not occur that $\lambda$ is positive non-integer.
We have shown that $\lambda \in \mathbb{Z} \setminus \{0\}$.

\end{proof}

\begin{remark}\label{rem1}
We remark for the Laurent series of the components of $f \in \rho(GU(n,1))$.
We have $\lambda = a_2/a_1 = c_2/c_1 \in \mathbb{Z}$
and $a_1 = \pm 1$ by Lemma~\ref{cla1}.
By Lemma~\ref{LU}, $\nu_1^{(i)} + \cdots + \nu_n^{(i)} \geq 0$ 
for $0 \leq i \leq n$.
For the Laurent series of $f_0$ in (\ref{eqfi}),
we have
$\nu_0^{(0)} = 1-k\lambda$ and 
$\nu_1^{(0)} + \cdots + \nu_n^{(0)} = k \geq 0$ by (\ref{ac0}), 
where $k \in \Z_{\geq 0}$.
For the Laurent series of $f_i$ in (\ref{eqfi}),
$\nu_0^{(i)} = -l\lambda$
and 
$\nu_1^{(i)} + \cdots + \nu_n^{(i)} = 1+l$, for $1 \leq i \leq n$,
where $l \in \Z$ and $l\geq -1$ by (\ref{aci}).
This is the difference between the case $\lambda \not \in \Z$ and $\lambda \in \Z$.
Hence, the Laurent series of components of
$f \in \rho(GU(n,1))$ are
\begin{eqnarray}\label{ac0'}
f_0(z_0, \ldots, z_n) 
= \sum_{k=0}^{\infty} \, {\sum_{|\nu'|=k}}^{\hspace{-4pt}\prime}\,\,\, a^{(0)}_{\nu'}z_0^{1-k\lambda}(z')^{\nu'},
\end{eqnarray}
and
\begin{eqnarray}\label{aci'}
f_i(z_0, \ldots, z_n) 
= a^{(i)}_{(\lambda,0,\ldots,0)}z_0^{\lambda} + 
\sum_{k=0}^{\infty} \, {\sum_{|\nu'|=1+k}}^{\hspace{-9pt}\prime}\,\,\,\,
a^{(i)}_{\nu'}z_0^{-k\lambda}(z')^{\nu'}
\end{eqnarray}
for $1\leq i \leq n$.
\end{remark}

We record a lemma which can be proven
in a similar way of the proof of Lemma~\ref{cla1}.

\begin{lemma}\label{5.0}
There exists an automorphism $f \in \rho(GU(n,1)) \setminus G$ such that,
in the Laurent expansions $(\ref{aci'})$ above,
at least one of $a^{(i)}_{(\lambda,0,\ldots,0)}$, for $1 \leq i \leq n$, does not vanish.
\end{lemma}

\begin{proof}
If, for any $f$, all $a^{(i)}_{(\lambda,0,\ldots,0)}$ vanish,
then 
\begin{eqnarray*}
Pf = 
\begin{pmatrix}
a^{(0)}_{(1,0,\ldots,0)} & a^{(0)}_{(0,1,0,\ldots,0)} & \cdots & a^{(0)}_{(0,\ldots,0,1)} &\\
0 & a^{(1)}_{(0,1,0,\ldots,0)} & \cdots & a^{(1)}_{(0,\ldots,0,1)}\\
 \vdots  & \vdots &  \ddots & \vdots\\
0 & a^{(n)}_{(0,1,0,\ldots,0)} & \cdots & a^{(n)}_{(0,\ldots,0,1)}
\end{pmatrix}
\end{eqnarray*}
gives a homomorphism from $SU(n,1)$ to $GL(n+1,\C)$
(when $\lambda=1$, $a^{(0)}_{(0,1,0,\ldots,0)}, \ldots, a^{(0)}_{(0,\ldots,0,1)}$ may not be zero),
as the proof of Lemma~\ref{cla1}.
Moreover it is nontrivial on $\{1\}\times SU(n)$. 
However,
this contradicts Lemma~\ref{notsimple}.

\end{proof}

\medskip
Since $G=\rho(G(U(1) \times U(n)))$ acts as linear transformations on 
$\Omega \subset \mathbb{C}^{n+1}$, 
it preserves the boundary $\partial \Omega$ of $\Omega$.
We now study the action of $G$ on $\partial \Omega$.
The $G$-orbits of points in $\mathbb{C}^{n+1}$ 
consist of four types as follows:

(i) If 
$p = (p_0, p_1, \ldots, p_n) \in \mathbb{C}^* \times
(\mathbb{C}^n \setminus \{0\})$,
then
\begin{equation} \label{GU1}
G \cdot p = \{ (z_0, \ldots, z_n) \in \mathbb{C}^{n+1} \setminus \{0\}
 : -a|z_0|^{2\lambda} + |z_1|^2 + \cdots + |z_n|^2 = 0\},
\end{equation}
where
$a:=(|p_1|^2 + \cdots + |p_n|^2)/|p_0|^{2\lambda} > 0$
and $\lambda \in \Z \setminus \{0\}$ by Lemma \ref{cla1}.

\smallskip

(ii) If $p' = (0, p'_1, \ldots, p'_n) \in \mathbb{C}^{n+1} \setminus \{0\}$,
then
\begin{equation} \label{GU2}
G \cdot p' = \{0\} \times (\mathbb{C}^n \setminus \{0\}).
\end{equation}

\smallskip

(iii) If $p'' = (p''_0, 0, \ldots, 0) \in \mathbb{C}^{n+1} \setminus \{0\}$,
then
\begin{equation} \label{GU3}
G \cdot p'' = \mathbb{C}^* \times \{(0,\ldots,0)\in \C^n\}.
\end{equation}

\smallskip

(iv) If $p''' = (0, \ldots, 0) \in \mathbb{C}^{n+1}$,
then
\begin{equation} \label{GU4}
G \cdot p''' = \{0\} \subset \mathbb{C}^{n+1}.
\end{equation}

\smallskip

\begin{lemma}\label{3.6}
Assume $c_1c_2 \neq 0$.
If 
\begin{equation*} 
\Omega \cap (\C^* \times (\C^n \setminus \{0\})) 
= \C^* \times (\C^n \setminus \{0\}),
\end{equation*}
then $\Omega$ is equal to $\C^{n+1}$ or $\C^{n+1}  \setminus \{0\}$.
\end{lemma}

\begin{proof}
If $\Omega \cap (\mathbb{C}^* \times (\mathbb{C}^n \setminus \{0\}))
= \mathbb{C}^* \times (\mathbb{C}^n \setminus \{0\})$,
then 
$\Omega$ equals one of the following domains 
by the $G$-actions of the type (\ref{GU2}), (\ref{GU3}) and (\ref{GU4}) above:
\begin{equation*} 
\C^{n+1},
\C^{n+1}  \setminus \{0\},
\C \times (\C^n \setminus \{0\}),
\C^* \times \C^n
\,\,\mathrm{or} \,\,\C^* \times (\C^n \setminus \{0\}).
\end{equation*}
Clearly $GU(n,1)$ acts on $\C^{n+1}$ and $\C^{n+1} \setminus \{0\}$
by matrix multiplications.
We will prove that, 
for the latter three domains, $GU(n,1)$ does not act effectively.
By (\ref{ac0'}) and (\ref{aci'}), Laurent expansions of 
$f \in \rho(GU(n,1)) \subset \aut(\Omega)$ are
\begin{eqnarray*}
f_0(z_0, \ldots, z_n) 
= \sum_{k=0}^{\infty} \, {\sum_{|\nu'|=k}}^{\hspace{-5pt}\prime}\,\,\,\, 
a^{(0)}_{\nu'}z_0^{1-k\lambda}(z')^{\nu'},
\end{eqnarray*}
and
\begin{eqnarray*}
f_i(z_0, \ldots, z_n) 
= a^{(i)}_{(\lambda,0,\ldots,0)}z_0^{\lambda} + 
\sum_{k=0}^{\infty} \, {\sum_{|\nu'|=1+k}}^{\hspace{-9pt}\prime}\,\,\,\,\,
a^{(i)}_{\nu'}z_0^{-k\lambda}(z')^{\nu'}
\end{eqnarray*}
for $1\leq i \leq n$, where $\lambda \in \Z \setminus \{0\}$.

Suppose $\Omega = \C^* \times (\C^n \setminus \{0\})$.
$f$ and $f^{-1} \in \aut(\C^* \times (\C^n \setminus \{0\}))$
extend holomorphically on $\C^* \times \C^n$, therefore 
$f \in \aut(\C^* \times \C^n)$.
There exists $f \in \rho(GU(n,1))$ such that some 
$a^{(i)}_{(\lambda,0,\ldots,0)}$ does not vanish, by Lemma \ref{5.0}.
However such $f$ does not
preserve $\C^* \times \{0\}$, this contradicts that 
$f \in \aut(\C^* \times (\C^n \setminus \{0\}))$ and
$f \in \aut(\C^* \times \C^n)$.

We can also prove $\Omega \neq \C \times (\C^n \setminus \{0\})$
in the same way.

Finally we suppose $\Omega = \C^* \times \C^n$.
Put an automorphism $\varpi_{\lambda} \in \aut(\Omega)$ by 
$\varpi_\lambda(z) = (z_0, z_0^{\lambda}z_1, \ldots, z_0^{\lambda}z_n)$.
Then
$\varpi_{\lambda}^{-1} \rho(GU(n,1)) \varpi_{\lambda}$
is a subgroup of $\aut(\Omega)$.
For
$h = \varpi_{\lambda}^{-1} \circ f \circ \varpi_{\lambda} =:(h_0,\ldots,h_n)$,
$f \in \rho(GU(n,1))$,
we have
\begin{eqnarray}\label{Lh0}
h_0(z_0, z_1, \ldots, z_n) 
= z_0 \sum_{k=0}^{\infty} \, {\sum_{|\nu'|=k}}^{\hspace{-4pt}\prime}\,\,\,\,
a^{(0)}_{\nu'}(z')^{\nu'},
\end{eqnarray}
and
\begin{eqnarray}\label{Lhi}
h_i(z_0, z_1, \ldots, z_n) 
= \frac{a^{(i)}_{(\lambda,0,\ldots,0)} + 
\sum_{k=0}^{\infty} \, {\sum'_{|\nu'|=1+k}} \,\,
a^{(i)}_{\nu'}(z')^{\nu'}}{\left(\sum_{k=0}^{\infty} \, {\sum'_{|\nu'|=k}} \,a^{(0)}_{\nu'}(z')^{\nu'}\right)^{\lambda}}
\end{eqnarray}
for $1 \leq i \leq n$.
Thus we have an action of $GU(n,1)$ on $\C^n$ by $(h_1,\ldots,h_n)$,
since $h_i$ for $1 \leq i \leq n$ 
do not depend on $z_0$.
Note that $U(n)$-action on $\C^n$ is still linear.
Then, by Lemma~\ref{extG} and Lemma~\ref{linG},
$GU(n,1)$-action on $\C^n$ is linearizable, 
since the categorical quotient $\C^n/\hspace{-3pt}/U(n)$ is just one point and 
$GU(n,1)$ is a reductive group.
However this contradicts Lemma~\ref{notsimple}
since $\{1\} \times SU(n) \subset SU(n,1)$, $n>1$,
acts non-trivially.

\end{proof}

Let us consider the case 
$\Omega \cap (\mathbb{C}^* \times (\mathbb{C}^n \setminus \{0\}))
\neq \mathbb{C}^* \times (\mathbb{C}^n \setminus \{0\})$.
Then we have
$\partial \Omega \cap (\mathbb{C}^* \times (\mathbb{C}^n \setminus \{0\}))
\neq \emptyset$.
Thus we can take a point 
\begin{eqnarray*} 
p=(p_0, \ldots, p_n) \in \partial \Omega \cap 
(\mathbb{C}^* \times (\mathbb{C}^n \setminus \{0\})).
\end{eqnarray*} 
Let 
\begin{eqnarray*} 
&&a := (|p_1|^2 + \cdots + |p_n|^2)/|p_0|^{2\lambda} > 0,\\
&&A_{a,\lambda} := \{ (z_0, \ldots, z_n) \in \mathbb{C}^{n+1}: 
-a|z_0|^{2\lambda} + |z_1|^2 + \cdots + |z_n|^2 = 0\}.
\end{eqnarray*}
Note that 
\begin{equation*} 
\partial \Omega  \supset A_{a,\lambda},
\end{equation*}
by the $G$-action of the type (\ref{GU1}).
If $\lambda > 0$,
then $\Omega$ is contained in
\begin{equation*} 
D^{+}_{a,\lambda} = \{ |z_1|^2 + \cdots + |z_n|^2 > a|z_0|^{2\lambda}\}
\end{equation*}
or 
\begin{equation*} 
C^{+}_{a,\lambda} = \{ |z_1|^2 + \cdots + |z_n|^2 < a|z_0|^{2\lambda}\}.
\end{equation*}
If $\lambda < 0$,
then $\Omega$ is contained in
\begin{equation*} 
D^{-}_{a,\lambda} = \{ (|z_1|^2 + \cdots + |z_n|^2)|z_0|^{-2\lambda} > a\}
\end{equation*}
or 
\begin{equation*} 
C^{-}_{a,\lambda} = \{ (|z_1|^2 + \cdots + |z_n|^2)|z_0|^{-2\lambda} < a\}.
\end{equation*}

Let us first consider the case $\partial \Omega = A_{a,\lambda}$,
that is, $\Omega = D^{+}_{a,\lambda}$, $C^{+}_{a,\lambda}$,
$D^{-}_{a,\lambda}$ or $C^{-}_{a,\lambda}$.

\begin{lemma}\label{eqthm}
If $\Omega=D^{+}_{a,\lambda}$,  
then $\lambda=1$ and $\Omega$ is biholomorphic to $D^{n,1}$.
\end{lemma}

\begin{proof}
If $\lambda=1$, there exists a biholomorphic map from $\Omega$ to $D^{n,1}$ by
\begin{eqnarray*}
\Phi : \C^{n+1} \ni (z_0, z_1, \ldots, z_n) \longmapsto (a^{1/2}z_0, z_1, \ldots, z_n) \in \C^{n+1}.
\end{eqnarray*}
If $\lambda \neq 1$, then by {\it Remark} \ref{rem1},
for $f \in \rho(GU(n,1))$, the Laurent expansions of its components are
\begin{eqnarray*}
f_0(z_0, \ldots, z_n) 
= \sum_{k=0}^{\infty} {\sum_{|\nu'|=k}}^{\hspace{-4pt}\prime}\,\,\,\, 
a^{(0)}_{\nu'}z_0^{1-k\lambda}(z')^{\nu'},
\end{eqnarray*}
and
\begin{eqnarray*}
f_i(z_0, \ldots, z_n) 
= a^{(i)}_{(\lambda,0,\ldots,0)}z_0^{\lambda} 
+ \sum_{k=0}^{\infty} {\sum_{|\nu'|=1+k}}^{\hspace{-9pt}\prime}\,\,\,\,
a^{(i)}_{\nu'}z_0^{-k\lambda}(z')^{\nu'},
\end{eqnarray*}
for $1\leq i \leq n$.
Since $D^{+}_{a,\lambda} \cap \{z_0=0\} \neq \emptyset$,
it follows that the 
negative degree terms of $z_0$ do not appear in the Laurent expansions.
Therefore
\begin{eqnarray*}
f_0(z_0, \ldots, z_n) 
= a^{(0)}_{(1,0,\ldots,0)}z_0,
\end{eqnarray*}
and
\begin{eqnarray*}
f_i(z_0, \ldots, z_n) 
= a^{(i)}_{(\lambda,0,\ldots,0)}z_0^{\lambda} 
+ {\sum_{|\nu'|=1}}^{\hspace{-4pt}\prime}\,\,\,\,
a^{(i)}_{\nu'}(z')^{\nu'},
\end{eqnarray*}
for $1\leq i \leq n$.
Consider 
\begin{eqnarray*}
Pf (z) = \left(a^{(0)}_{(1,0,\ldots,0)}z_0,
{\sum_{|\nu'|=1}}^{\hspace{-4pt}\prime}\,\,\,\, a^{(1)}_{\nu'}(z')^{\nu'},
\ldots,
{\sum_{|\nu'|=1}}^{\hspace{-4pt}\prime}\,\,\,\, a^{(n)}_{\nu'}(z')^{\nu'}\right).
\end{eqnarray*}
Then 
$Pf$ gives a representation of $GU(n,1)$ by
\begin{eqnarray*}
P\rho : GU(n,1) \ni g \longmapsto P(\rho(g)) \in GL(n+1,\mathbb{C}),
\end{eqnarray*}
as in the proof of Lemma \ref{cla1},
and
we showed that this can not occur by Lemma~\ref{notsimple}.
Thus $\lambda=1$ and $\Omega$ is biholomorphic to $D^{n,1}$.

\end{proof}

\begin{lemma}\label{eqthm1}
If $\Omega=C^{+}_{a,\lambda}$,  
then $\Omega$ is biholomorphic to $C^{n,1}$.
\end{lemma}

\begin{proof}
Indeed, there exists a biholomorphic map from $\Omega$ to $C^{n,1}$ by
\begin{eqnarray*}
\Phi' : \C^{n+1} \ni (z_0, z_1, \ldots, z_n) \longmapsto 
(a^{1/2}z_0, z_0^{1-\lambda}z_1, \ldots, z_0^{1-\lambda}z_n) \in \C^{n+1}.
\end{eqnarray*}

\end{proof}

\begin{lemma}\label{CB}
$\Omega \neq D^{-}_{a,\lambda}$.
\end{lemma}

\begin{proof}
Assume $\Omega = D^{-}_{a,\lambda}$.
Without loss of generality, we may assume $a=1$.
Then $D^{-}_{a,\lambda}$ is biholomorphic to
$\mathbb{C}^* \times (\C^n \setminus \overline{\B^n})$ by a biholomorphism
\begin{equation}\label{A-1}
\varpi_{\lambda}^{-1}:(z_0, z_1, \ldots, z_n) \mapsto (z_0, z_0^{-\lambda}z_1, \ldots, z_0^{-\lambda}z_n).
\end{equation}
The $GU(n,1)$-action on $\C^* \times (\C^n \setminus \overline{\B^{n}})$ 
induced by this biholomorphism are given by 
$\varpi_{\lambda}^{-1} \circ f \circ \varpi_{\lambda}$ for $f \in \rho(GU(n,1))$,
and $U(1) \times U(n)$-action on 
$\mathbb{C}^* \times (\mathbb{C}^n \setminus \overline{\mathbb{B}^n})$ 
is still linear.
Then, by $n>1$,
the $GU(n,1)$-action extends holomorphically on $\C^* \times \C^n$.
However we have shown in the proof of Lemma~\ref{3.6} that
$GU(n,1)$ does not act effectively on $\C^* \times \C^n$ through $\varpi_{\lambda}^{-1} \rho \varpi_{\lambda}$.
Thus this is a contradiction, and the lemma is proven.

\end{proof}

\begin{lemma} \label{cla4}
$\Omega \neq C^{-}_{a,\lambda}$.
\end{lemma}

\begin{proof}
Suppose $\Omega = C^{-}_{a,\lambda}$.
Then, for $f \in \rho(GU(n,1))$,
the Laurent series of the components are
\begin{eqnarray*}
f_0(z_0, \ldots, z_n) 
=  \sum_{k=0}^{\infty} {\sum_{|\nu'|=k}}^{\hspace{-4pt}\prime}\,\,  
a^{(0)}_{\nu'}z_0^{1-k\lambda}(z')^{\nu'},
\end{eqnarray*}
and
\begin{eqnarray*}
f_i(z_0, \ldots, z_n) = 
a^{(i)}_{(\lambda,0,\ldots,0)}z_0^{\lambda} + 
 \sum_{k=0}^{\infty} {\sum_{|\nu'|=1+k}}^{\hspace{-9pt}\prime}\,\,\,\,
a^{(i)}_{\nu'}z_0^{-k\lambda}(z')^{\nu'}
\end{eqnarray*}
for $1\leq i \leq n$.
Since $C^{-}_{a,\lambda} \cap \{z_0=0\} \neq \emptyset$,
$a^{(i)}_{(\lambda,0,\ldots,0)}$ must be vanish.
However this contradicts Lemma \ref{5.0}.

\end{proof}

\medskip
Let us consider the case $\partial \Omega \neq A_{a,\lambda}$.
We prove that, in this case, $GU(n,1)$ does not act
effectively on $\Omega$, except Case (I-iii) below.

\smallskip

{\bf Case (I)}:
$(\partial \Omega \setminus A_{a,\lambda}) \cap (\C^* \times (\C^n \setminus \{0\}))
 = \emptyset$.
\\
In this case,
$\partial \Omega$ is the union of $A_{a,\lambda}$ and some of the following sets
\begin{equation}\label{sets}
\{0\} \times (\C^n \setminus \{0\}),
\C^* \times \{0\}
\,\,\mathrm{or} \,\,
\{0\} \subset \C^{n+1},
\end{equation}
by the $G$-actions on the boundary of the type 
(\ref{GU2}), (\ref{GU3}) and (\ref{GU4}).
If $\Omega \subset D^{-}_{a,\lambda}$,
then the sets in (\ref{sets})
can not be contained in the boundary of $\Omega$.
Thus we consider only the cases $\Omega \subsetneq D^{+}_{a,\lambda}$,
$C^{+}_{a,\lambda}$ and $C^{-}_{a,\lambda}$.

\smallskip
{\bf Case (I-i)}:
$\Omega \subsetneq D^{+}_{a,\lambda}$.\\
In this case, $\mathbb{C}^* \times \{0\}$ can not be a subset of the boundary of $\Omega$,
and $\{0\} \in A_{a,\lambda}$.
Thus 
\begin{eqnarray*}
&&\partial \Omega = A_{a,\lambda} \cup (\{0\} \times \C^n),\\
&&\Omega = D^{+}_{a,\lambda} \setminus (\{0\} \times \C^n).
\end{eqnarray*}
Then $\Omega$ is biholomorphic to $\C^* \times (\C^n \setminus \overline{\B^{n}})$ by a biholomorphism of the form in (\ref{A-1}).
Thus, as the proof of Lemma~\ref{CB},
this case does not occur.

\smallskip
{\bf Case (I-ii)}:
$\Omega \subsetneq C^{+}_{a,\lambda}$.\\
In this case, $\{0\} \times (\mathbb{C}^n \setminus \{0\})$ 
can not be a subset of the boundary of $\Omega$,
and $\{0\} \in A_{a,\lambda}$.
Thus 
\begin{eqnarray*}
&&\partial \Omega = A_{a,\lambda} \cup (\mathbb{C} \times \{0\}),\\
&&\Omega = C^{+}_{a,\lambda} \setminus (\mathbb{C} \times \{0\}).
\end{eqnarray*}
Then $a^{(i)}_{(\lambda,0,\ldots,0)}$ in (\ref{aci'})
must vanish since $f \in \rho(GU(n,1))$ preserves $\C^* \times \{0\}$,
and this contradicts Lemma~\ref{5.0}.
Thus this case does not occur.

\smallskip

{\bf Case (I-iii)}:
$\Omega \subsetneq C^{-}_{a,\lambda}$.\\
In this case, $\Omega$ coincides with one of the followings:
\begin{eqnarray*}
&&C_1=C^{-}_{a,\lambda} \setminus (\{0\} \times \mathbb{C}^n) \cup(\mathbb{C} \times \{0\}),
\\
&&C_2=C^{-}_{a,\lambda} \setminus (\{0\} \times \mathbb{C}^n),\\
&&C_3=C^{-}_{a,\lambda} \setminus (\mathbb{C} \times \{0\}),\\
&&C_4=C^{-}_{a,\lambda} \setminus \{0\in \C^{n+1}\}.
\end{eqnarray*}
Then
$\Omega \neq C_1$, since $f \in \rho(GU(n,1))$
preserves $\C^* \times \{0\}$ only if all
$a^{(i)}_{(\lambda,0,\ldots,0)}$ in (\ref{aci'})
vanish, and this does not occur by Lemma \ref{5.0}.
$C_2$ is biholomorphic to $C^{n,1}$ by a biholomorphism
\begin{equation*}
(z_0, z_1, \ldots, z_n) \mapsto 
(z_0, a^{-1/2}z_0^{1-\lambda}z_1, \ldots, a^{-1/2}z_0^{1-\lambda}z_n),
\end{equation*}
and therefore $GU(n,1)$ acts effectively.
If $\Omega = C_3$ or $C_4$, 
then $\Omega \cap \{z_0= 0\} \neq \emptyset$.
Hence in the Laurent expansions (\ref{aci'}) of $f \in \rho(GU(n,1))$,
$a^{(i)}_{(\lambda,0,\ldots,0)}$, for $1 \leq i \leq n$, must vanish,
and this contradicts Lemma \ref{5.0}.
Thus these cases do not occur either.

\bigskip

{\bf Case (I\hspace{-.1em}I)}: 
$(\partial \Omega \setminus A_{a,\lambda}) \cap (\mathbb{C}^* \times (\mathbb{C}^n \setminus \{0\})) \neq \emptyset$.
\\
In this case, we can take a point 
$p'=(p'_0, \ldots, p'_n) \in (\partial \Omega \setminus A_{a,\lambda}) \cap (\mathbb{C}^* \times (\mathbb{C}^n \setminus \{0\}))$.
We put 
\begin{eqnarray*}
&&b := (|p'_1|^2 + \cdots + |p'_n|^2)/|p'_0|^{2\lambda} > 0,\\
&&B_{b,\lambda} := \{(z_0, \ldots, z_n) \in \mathbb{C}^{n+1} : -b|z_0|^{2\lambda} + |z_1|^2 + \cdots + |z_n|^2 = 0\}.
\end{eqnarray*}
We may assume $a>b$ without loss of generality.

\smallskip

{\bf Case (I\hspace{-.1em}I-i)}:
$\partial \Omega = A_{a,\lambda} \cup B_{b,\lambda}$. \\
Since $\Omega$ is connected, it coincides with
\begin{eqnarray*} 
C^{+}_{a,\lambda} \cap D^{+}_{b,\lambda} 
= \{ b|z_0|^{2\lambda} < |z_1|^2 + \cdots + |z_n|^2 < a|z_0|^{2\lambda} \},
\end{eqnarray*}
or
\begin{eqnarray*} 
C^{-}_{a,\lambda} \cap D^{-}_{b,\lambda} 
= \{ b < (|z_1|^2 + \cdots + |z_n|^2)|z_0|^{-2\lambda} < a \}.
\end{eqnarray*}
By the biholomorphic map in (\ref{A-1}),
these domains are biholomorphic to $\mathbb{C}^* \times \mathbb{B}^n(a,b)$, 
where 
\begin{equation*} 
\mathbb{B}^n(a,b) = \{(z_1, \ldots, z_n) \in \mathbb{C}^n : b<|z_{1}|^{2} + \cdots + |z_{n}|^{2} < a\}.
\end{equation*}
However this implies that $a^{(i)}_{(\lambda,0,\ldots,0)} = 0$ in (\ref{aci'}) (see also (\ref{Lhi}))
since the 
$GU(n,1)$-action extends to the domain $\mathbb{C}^* \times \mathbb{B}^n_a$ 
and preserves $\mathbb{C}^* \times \mathbb{B}^n_b$.
Therefore this contradicts Lemma \ref{5.0}.
Thus this case does not occur.

\smallskip

{\bf Case (I\hspace{-.1em}I-ii)}: 
$\partial \Omega \supsetneq  A_{a,\lambda} \cup B_{b,\lambda}$.
\\
Suppose $(\partial \Omega \setminus (A_{a,\lambda} \cup B_{b,\lambda}) )\cap (\mathbb{C}^* \times \mathbb{C}^n \setminus \{0\})
\neq \emptyset$.
Then we can take 
\begin{equation*} 
p''=(p''_0, \ldots, p''_n) \in (\partial \Omega \setminus (A_{a,\lambda} \cup B_{b,\lambda})) \cap (\mathbb{C}^* \times \mathbb{C}^n 
\setminus \{0\}).
\end{equation*}
We put
\begin{eqnarray*} 
&&c=(|p''_1|^2 + \cdots + |p''_n|^2)/|p''_0|^{2\lambda},\\
&&C_{c,\lambda} = \{(z_0, \ldots, z_n) \in \mathbb{C}^{n+1} : 
-c|z_0|^{2\lambda} + |z_1|^2 + \cdots + |z_n|^2 = 0\}.
\end{eqnarray*}
Then we have $A_{a,\lambda} \cup B_{b,\lambda} \cup C_{c,\lambda} \subset \partial \Omega$.
However $\Omega$ is connected, this is impossible. 
Therefore this case does not occur.
Let us consider the remaining case:
\begin{eqnarray*} 
(\partial \Omega \setminus (A_{a,\lambda} \cup B_{b,\lambda}) )\cap (\mathbb{C}^* \times \mathbb{C}^n \setminus \{0\})
= \emptyset.
\end{eqnarray*}
However,
$\mathbb{C}^* \times \{0\}$, $\{0\} \times (\mathbb{C}^n \setminus \{0\})$
and $\{0\} \in \mathbb{C}^{n+1}$
can not be subsets of the boundary of $\Omega$
since $\Omega \subset C^{+}_{a,\lambda} \cap D^{+}_{b,\lambda}$
or
$\Omega \subset C^{-}_{a,\lambda} \cap D^{-}_{b,\lambda}$.
Thus this case does not occur either.

\smallskip
We have shown that if $c_1c_2 \neq 0$,
then $\Omega$ is biholomorphic to one of the following domains:
\begin{eqnarray*} 
\C^{n+1}, \C^{n+1} \setminus \{0\}, D^{n,1}\,\, \mathrm{or}\,\, C^{n,1} \simeq \C^* \times \B^n.
\end{eqnarray*}
This completes the proof.

\end{proof}

\section{The actions of $GU(1,1)$}
\label{sec:4}

Now we prove the following theorem.

\begin{theorem}  \label{M2}
Let $M$ be a connected complex manifold of dimension $2$ 
that is holomorphically separable
and admits a smooth envelope of holomorphy.
Assume that there exists an injective homomorphism of topological groups
$\rho_0 : GU(1,1) \longrightarrow \mathrm{Aut}(M)$.
Then $M$ is biholomorphic to one of the four domains $\C^{2}$, $\C^{2} \setminus \{0\}$, $D^{1,1}\simeq\C^*\times \D$ or
$\C \times \D$.
\end{theorem}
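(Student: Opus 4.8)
The plan is to mirror the strategy of Theorem~\ref{M1}, applying Lemma~\ref{4} to reduce $M$ to a Reinhardt domain $\Omega\subset\C^2$ on which $U(1)\times U(1)=T^2$ acts linearly as the maximal torus and $\rho(T^2)=T^2$, then to track the action of the central $\C^*\subset GU(1,1)$ through its diagonal representation. Writing $\rho(e^{2\pi i(s+it)})=\mathrm{diag}[e^{2\pi i\{a_1s+(b_1+ic_1)t\}},e^{2\pi i\{a_2s+(b_2+ic_2)t\}}]$ exactly as in \eqref{C*1}, with $a_1,a_2$ relatively prime and $(c_1,c_2)\neq(0,0)$ by injectivity, I would again split into the cases $c_1c_2=0$ and $c_1c_2\neq0$ and read off the exponent conditions \eqref{ac0} and \eqref{aci} governing which Laurent monomials can appear in the components of $f\in\rho(GU(1,1))$.

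\textbf{Main obstacle.} The crucial difference from Theorem~\ref{M1} is that the structural lemmas \ref{notsimple} and \ref{notsimple2} were stated for $SU(p,q)$ acting on spaces of low dimension and repeatedly used the fact that $SU(n)$, $n>1$, acts nontrivially on a ball factor; here $n=1$, so $SU(1)$ is trivial and those arguments collapse. Thus the hard part will be replacing every appeal to ``$SU(n)\subset\rho(SU(n,1))$ acts non-trivially'' by a direct analysis of the genuinely $3$-dimensional simple group $SU(1,1)\cong SL(2,\R)$. Concretely, I expect to re-derive the analogue of Lemma~\ref{cla1} — that $\lambda:=c_2/c_1=a_2/a_1$ is a nonzero integer — by the same $Pf$-linearization argument, since Lemma~\ref{notsimple} still applies verbatim for $p+q=2>k$ when $k=1$ and more generally forbids a faithful $2$-dimensional representation of $SU(1,1)$. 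The one-variable factors $D'$, $D'''$ that arise when $c_1c_2=0$ must be excluded using that $SU(1,1)$ admits no nontrivial homomorphism into $U(1)$, $\C^*$, or into itself via a proper sub-quotient, together with Lemma~\ref{notsimple2} to rule out $\D_r$, where the critical point is that $\aut(\D_r)=PU(1,1)$ and a nontrivial homomorphism $SU(1,1)\to PU(1,1)$ is now \emph{possible}; so this single branch, rather than being excluded, is exactly what produces the disk factor and the domains $\C\times\D$ and $\C^*\times\D\simeq D^{1,1}$.

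\textbf{Execution.} With $\lambda\in\Z\setminus\{0\}$ fixed, I would repeat the orbit analysis: the $G$-orbits in $\C^2$ fall into the four types \eqref{GU1}--\eqref{GU4}, and $\partial\Omega$ must be $G$-invariant. If $\Omega\cap(\C^*\times\C^*)=\C^*\times\C^*$ then the analogue of Lemma~\ref{3.6} gives $\Omega=\C^2$ or $\C^2\setminus\{0\}$, using Lemma~\ref{linG} and the nontriviality of $SU(1,1)$ on the relevant quotient. Otherwise $\partial\Omega\supset A_{a,\lambda}=\{-a|z_0|^{2\lambda}+|z_1|^2=0\}$, and the cases $\Omega=D^{+}_{a,\lambda}$, $C^{+}_{a,\lambda}$, $C^{-}_{a,\lambda}$ are handled exactly as in Lemmas~\ref{eqthm}, \ref{eqthm1}, \ref{cla4}: forcing $\lambda=1$ and $\Omega\simeq D^{1,1}$, or $\Omega\simeq C^{1,1}\simeq\C^*\times\D$, with the spurious cases excluded by the $z_0=0$ obstruction and the analogue of Lemma~\ref{5.0}. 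The branch where a one-dimensional disk factor survives yields $\C\times\D$. Since $D^{1,1}\simeq C^{1,1}$, the final list is exactly $\C^2$, $\C^2\setminus\{0\}$, $D^{1,1}\simeq\C^*\times\D$ and $\C\times\D$, completing the proof. The only genuinely new bookkeeping is verifying, in each exclusion step, that the one-variable target group is abelian (hence admits no nontrivial image of the perfect group $SU(1,1)$) except for the disk case governed by $PU(1,1)$, which must be retained rather than discarded.
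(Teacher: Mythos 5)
Your overall architecture matches the paper's (reduce to a Reinhardt domain via Lemma~\ref{4}, analyze the central $\C^*$ through the exponent equations, split on $c_1c_2$, classify by $G$-orbits on the boundary), and you correctly identify the one exclusion from Theorem~\ref{M1} that must be \emph{retained} here: the disk factor survives because a nontrivial homomorphism $SU(1,1)\to PU(1,1)$ is possible, which is what produces $\C\times\D$ and $\C^*\times\D$. However, there is a genuine gap at the step you treat as routine: the claim that $\lambda=c_2/c_1=a_2/a_1\in\Z\setminus\{0\}$ follows ``by the same $Pf$-linearization argument'' as Lemma~\ref{cla1}. That proof depends essentially on Lemma~\ref{LU}, which requires $p>1$: it is the linear $U(n)$-action, $n>1$, on the last $n$ variables that forces $\nu'\in\Z^n_{\geq0}$ and thereby pins down the signs of the $z_0$-exponents, which is exactly what makes $Pf$ well defined and multiplicative. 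In dimension $2$ both variables may carry negative Laurent exponents; the solution set of \eqref{f1}--\eqref{f2} is the full two-sided family $(\nu_1,\nu_2)=(1+ka_2,-ka_1)$, $k\in\Z$, and the $Pf$ argument cannot be run before one knows how $\Omega$ meets the coordinate axes. Indeed the paper only establishes $\lambda\in\Q$ (Lemma~\ref{n1cl}), carries $|a_1|,|a_2|$ through the entire orbit analysis (the boundary orbits are $\{-a|z_1|^{2|a_1|\lambda}+|z_2|^{2|a_1|}=0\}$ and the candidate domains are $\{|z_2|^{2|a_1|}\gtrless a|z_1|^{2|a_2|}\}$), and only in Lemma~\ref{4.4} shows $|a_2|=1$ and exhibits the extra monomial biholomorphism $(z_1,z_2)\mapsto(a^{1/2}z_1z_2^{1-|a_1|},z_2)$ onto $D^{1,1}$. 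Your subsequent case analysis, which presupposes $\lambda\in\Z$ and orbit sets of the form $A_{a,\lambda}=\{-a|z_0|^{2\lambda}+|z_1|^2=0\}$, therefore does not go through as written.

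A second, related omission: several branches that in Theorem~\ref{M1} are killed by ``$SU(n)\subset\rho(SU(n,1))$ acts nontrivially'' (the cases $\Omega=\C^*\times\C$, $\C^*\times\C^*$, $\Omega=D^{-}$, and Case (II-i)) have no such contradiction available when $n=1$, since $SU(1)$ is trivial. The paper disposes of them by a genuinely two-dimensional device absent from your sketch: conjugation by a monomial automorphism $\gamma_A$ of $\C^*\times\C^*$ attached to $A\in SL(2,\Z)$ with $A\,{}^t(a_2,-a_1)={}^t(1,0)$ (or by $\varpi_\lambda$ when $|a_1|=1$), which converts the $\C^*$-action into one with $c_1c_2=0$ and then contradicts Lemma~\ref{c_200}. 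Without either the integrality of $\lambda$ or this reduction, those branches remain open in your argument.
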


\begin{proof}
By Lemma~\ref{4} and the comments after that, 
we can assume $M$ is a Reinhardt domain $\Omega$ in $\C^{2}$,
and the $U(1) \times U(1)$-action on $\Omega \subset \C^{2}$ is linear.
We will 
prove that $\Omega$ is biholomorphic to  one of  the four domains
$\C^{2}$, $\C^{2} \setminus \{0\}$,
$D^{1,1} \simeq \C^* \times \D$ or $\C \times \D$.

Put a coordinate $(z_1, z_2)$ of $\C^2$.
Since $\rho(\mathbb{C}^*)$ is 
commutative with $\rho(U(1) \times U(1)) = U(1) \times U(1)$,
Lemma~\ref{KS1} tells us that $\rho(\mathbb{C}^*) \subset \Pi(\Omega)$,
so that we have 
\begin{eqnarray*}
\rho\left(e^{2\pi i(s+it)}\right) =
\mathrm{diag}\left[
e^{2\pi i\{a_1s+(b_1+ic_1)t\}}, e^{2\pi i\{a_2s+(b_2+ic_2)t\}}
\right]
\in \rho(\mathbb{C}^*),
\end{eqnarray*}
where
$s,t \in \mathbb{R}$, $a_1, a_2 \in \mathbb{Z}, b_1,b_2,c_1,c_2 \in \mathbb{R}$.
Since $\rho$ is injective,
$a_1, a_2$ are
relatively prime
and $(c_1, c_2) \neq (0, 0)$.
For $(e^{2\pi is_1}, e^{2\pi is_2}) \in U(1)\times U(1)$, $s_1, s_2 \in \mathbb{R}$,
we have
\begin{eqnarray}\label{T11}
\rho\left(
\mathrm{diag}\left[
e^{2\pi is_1}, e^{2\pi is_2}
\right]\right)
=
\mathrm{diag}\left[
e^{2\pi i(as_1+bs_2)}, e^{2\pi i(cs_1+ds_2)}
\right]
\in \rho\left(U(1)\times U(1)\right),
\end{eqnarray}
where
\begin{eqnarray*}
\begin{pmatrix}
a & b\\
c & d
\end{pmatrix}
\in GL(2, \mathbb{Z}),
\end{eqnarray*}
since $\rho$ is injective,
and we have $a+b=a_1$ and $c+d=a_2$.
To consider the actions of $\mathbb{C}^*$ and $U(1) \times U(1)$ 
on $\Omega$ together,
we put
\begin{equation*} 
G(U(1) \times U(1)) = 
\left\{e^{-2\pi t}
\mathrm{diag}\left[
u_1, u_2
\right]
\in GU(1,1) : t \in \mathbb{R}, u_1, u_2 \in U(1)
\right\}.
\end{equation*}
Then we have
\begin{eqnarray*} 
G &:=&\rho(G(U(1) \times U(n))) \\
&{}=&
\left\{
\mathrm{diag}\left[
e^{-2\pi c_1t}u_1, e^{-2\pi c_2t}u_2
\right]
\in GL(2,\mathbb{C}) : t \in \mathbb{R}, u_1, u_2 \in U(1)
\right\}.
\end{eqnarray*}
Note that $G$ is the centralizer
of $U(1) \times U(1)$ in $\rho(GU(n, 1))$.

Let $f =(f_1, f_2) \in \rho(GU(1, 1))$ and consider 
the Laurent series of its components:\\
\begin{eqnarray}\label{eqf1}
f_i(z_1, z_2) 
= \sum_{\nu \in \mathbb{Z}^{2}} a^{(i)}_{\nu}z^{\nu},
\end{eqnarray}
for $i=1,2$.
As Lemma \ref{cl1} in the proof of Theorem~\ref{M1}, we have:
\begin{lemma}\label{cl1'}
For any $f \in \rho(GU(1, 1)) \setminus G$,
there exists $\nu \in \mathbb{Z}^{2}$, $\neq (1,0)$,
such that $a^{(1)}_{\nu} \neq 0$ in $(\ref{eqf1})$,
or
there exists
$\nu \in \mathbb{Z}^{2}$, $\neq (0,1)$,
such that
$a^{(2)}_{\nu} \neq 0$ in $(\ref{eqf1})$.
\end{lemma}

By {\it Remark} \ref{rem0} after Lemma \ref{c_20} in the proof of Theorem~\ref{M1},
we have
\begin{lemma}\label{c_200}
If $c_1c_2=0$, then
$\Omega$ is biholomorphic to $\C^* \times \D$ or $\C \times \D$.
\end{lemma}

We consider the case $c_1c_2 \neq 0$.
Since $\mathbb{C}^*$ is the center of $GU(1,1)$, 
it follows that, 
for $f \in \rho(GU(1,1))$, 
\begin{eqnarray*}
\rho\left(e^{2\pi i(s+it)}\right) \circ f = f \circ \rho\left(e^{2\pi i(s+it)}\right).
\end{eqnarray*}
By $(\ref{eqf1})$,
this equation means
\begin{eqnarray*}
e^{2\pi i\{a_1s+(b_1+ic_1)t\}} \sum_{\nu} a^{(1)}_{\nu}z^{\nu}
&=& 
\sum_{\nu} a^{(1)}_{\nu}
\left(e^{2\pi i\{a_1s+(b_1+ic_1)t\}}z_1\right)^{\nu_1^{(1)}}
\left(e^{2\pi i\{a_2s+(b_2+ic_2)t\}}z_2\right)^{\nu_2^{(1)}}\\
&=&
\sum_{\nu} a^{(1)}_{\nu}
e^{2\pi i\{a_1s+(b_1+ic_1)t\}\nu_1^{(1)}} e^{2\pi i\{a_2s+(b_2+ic_2)t\}\nu_2^{(1)}}
z^{\nu},
\end{eqnarray*}
and
\begin{eqnarray*}
e^{2\pi i\{a_2s+(b_2+ic_2)t\}} \sum_{\nu} a^{(2)}_{\nu}z^{\nu}
&=&
\sum_{\nu} a^{(2)}_{\nu}
\left(e^{2\pi i\{a_1s+(b_1+ic_1)t\}}z_1\right)^{\nu_1^{(2)}}
\left(e^{2\pi i\{a_2s+(b_2+ic_2)t\}}z_2\right)^{\nu_2^{(2)}}\\
 &=&
\sum_{\nu} a^{(1)}_{\nu}
e^{2\pi i\{a_1s+(b_1+ic_1)t\}\nu_1^{(2)}} e^{2\pi i\{a_2s+(b_2+ic_2)t\}\nu_2^{(2)}} z^{\nu}.
\end{eqnarray*}
Thus for each $\nu \in \mathbb{Z}^{2}$, we have
\begin{eqnarray*}
e^{2\pi i\{a_1s+(b_1+ic_1)t\}} a^{(1)}_{\nu}
= e^{2\pi i\{a_1s+(b_1+ic_1)t\}\nu_1^{(1)}} e^{2\pi i\{a_2s+(b_2+ic_2)t\}\nu_2^{(1)}} a^{(1)}_{\nu},
\end{eqnarray*}
and
\begin{eqnarray*}
e^{2\pi i\{a_2s+(b_2+ic_2)t\}} a^{(2)}_{\nu}
= e^{2\pi i\{a_1s+(b_1+ic_1)t\}\nu_1^{(2)}} e^{2\pi i\{a_2s+(b_2+ic_2)t\}\nu_2^{(2)}} a^{(2)}_{\nu}
\end{eqnarray*}
Therefore, if
$a_{\nu}^{(1)}\neq0$ for $\nu=(\nu_1^{(1)}, \nu_2^{(1)})$,
we have
\begin{eqnarray}\label{f1}
\hspace{-3cm}
\left\{\begin{array}{l}
a_1(\nu_1^{(1)}-1) + a_2\nu_2^{(1)} = 0,\\
c_1(\nu_1^{(1)}-1) + c_2\nu_2^{(1)} = 0,
\end{array}
\right.
\end{eqnarray}
and if 
$a_{\nu}^{(2)} \neq 0$ for $\nu=(\nu_1^{(2)}, \nu_2^{(2)})$,
we have
\begin{eqnarray}\label{f2}
\hspace{-33mm}
\left\{\begin{array}{l}
a_1\nu_1^{(2)} + a_2(\nu_2^{(2)} - 1) = 0,\\
c_1\nu_1^{(2)} + c_2(\nu_2^{(2)} - 1) = 0.
\end{array}
\right.
\end{eqnarray}

\begin{lemma}\label{n1cl}
If $c_1c_2 \neq 0$, then $\lambda := c_2/c_1 = a_2/a_1 \in \Q$.
\end{lemma}

\begin{proof}
The proof appeared at the beginning of that of Lemma \ref{cla1}.
The lemma follows from Lemma \ref{cl1'} (\ref{f1}) and (\ref{f2}).
We omit the proof.
\end{proof}

Since $a_1$ and $a_2$ are relatively prime, 
we have
$(\nu_1^{(1)},\nu_2^{(1)}) = (1+ka_2,-ka_1)$ by (\ref{f1}),
and
$(\nu_1^{(2)},\nu_2^{(2)}) = (la_2,1-la_1)$ by (\ref{f2}),
where $k, l \in \Z$.
Then 
the Laurent series of the components of $f \in \rho(GU(1,1))$ are
\begin{eqnarray}\label{form1}
f_1(z_1, z_2) 
= \sum_{k \in \Z}
a^{(1)}_{\nu}z_1^{1+ka_2} z_2^{-ka_1},
\end{eqnarray}
and
\begin{eqnarray}\label{form2}
f_2(z_1, z_2) 
= \sum_{k \in \Z}
a^{(2)}_{\nu}z_1^{ka_2} z_2^{1-ka_1}.
\end{eqnarray}

Since $G=\rho(G(U(1) \times U(1)))$ acts as linear transformations on 
$\Omega \subset \mathbb{C}^{2}$, 
it preserves the boundary $\partial \Omega$ of $\Omega$.
We now study the action of $G$ on $\partial \Omega$.
The $G$-orbits of points in $\mathbb{C}^{2}$ 
consist of four types as follows:

(i) If 
$p = (p_1, p_2) \in \mathbb{C}^* \times \mathbb{C}^*$,
then
\begin{equation} \label{GU11}
G \cdot p = \{ (z_1, z_2) \in \mathbb{C}^* \times \mathbb{C}^*
 : -a|z_1|^{2|a_1|\lambda} + |z_2|^{2|a_1|} = 0\},
\end{equation}
where
$a := (|p_2|^2/|p_1|^{2\lambda})^{|a_1|} > 0$
and $\lambda = c_2/c_1 = a_2/a_1 \in \Q$ by Lemma \ref{n1cl}.

\smallskip

(ii) If $p' = (0, p'_2) \in \mathbb{C}^{2} \setminus \{0\}$,
then
\begin{equation} \label{GU22}
G \cdot p' = \{0\} \times \mathbb{C}^*.
\end{equation}

\smallskip

(iii) If $p'' = (p''_1, 0) \in \mathbb{C}^{2} \setminus \{0\}$,
then
\begin{equation} \label{GU33}
G \cdot p'' = \mathbb{C}^* \times \{0\}.
\end{equation}

\smallskip

(iv) If $p''' = (0, 0) \in \mathbb{C}^{2}$,
then
\begin{equation} \label{GU44}
G \cdot p''' = \{0\} \subset \mathbb{C}^{2}.
\end{equation}

\smallskip

\begin{lemma}\label{4.3}
If 
\begin{equation*} 
\Omega \cap (\mathbb{C}^* \times \mathbb{C}^*) 
= \mathbb{C}^* \times \mathbb{C}^*,
\end{equation*}
then $\Omega$ is equal to $\C^{2}$ or $\C^{2}  \setminus \{0\}$.
\end{lemma}

Lemma~\ref{4.3} is similar to Lemma~\ref{3.6}, but the proof is a bit different
from that of Lemma~\ref{3.6} and longer than it.

\begin{proof}
If $\Omega \cap (\C^* \times \C^*)
= \C^* \times \C^*$,
then 
$\Omega$ equals one of the following domains 
by the $G$-actions of the type (\ref{GU22}), (\ref{GU33}) and (\ref{GU44}) above:
\begin{equation*} 
\C^{2},
\C^{2}  \setminus \{0\},
\C \times \C^*,
\C^* \times \C
\,\,\mathrm{or} \,\,\C^* \times \C^*.
\end{equation*}
Clearly $GU(1,1)$ acts on $\C^{2}$ and $\C^{2} \setminus \{0\}$
by matrix multiplications.
For the latter three domains, $GU(1,1)$ does not act effectively.

{\bf Case: $\C^* \times \C$}\\
Suppose $\Omega = \C^* \times \C$.
Then, in (\ref{form1}) and (\ref{form2}),
there is no negative degree term of $z_2$ since
$\Omega \cap \{z_2=0\} \neq \emptyset$.

First we assume $|a_1| \neq 1$.
If $\lambda < 0$, then the Laurent series of the components of
$f \in \rho(GU(1,1))$ are
\begin{eqnarray*}\label{}
f_1(z_1, z_2) 
= \sum_{k \in \Z_{\geq 0}}
a^{(1)}_{\nu}z_1^{1+k|a_2|} z_2^{k|a_1|},
\end{eqnarray*}
and
\begin{eqnarray*}\label{}
f_2(z_1, z_2) 
= \sum_{k \in \Z_{\geq 0}}
a^{(2)}_{\nu}z_1^{k|a_2|} z_2^{1+k|a_1|}.
\end{eqnarray*}
We put
\begin{eqnarray*}\label{}
Pf (z) := \left(a^{(1)}_{(1,0)}z_1, a^{(2)}_{(0,1)}z_2\right).
\end{eqnarray*}
As a matrix we can write
\begin{eqnarray*}
Pf = 
\begin{pmatrix}
a^{(1)}_{(1,0)} & 0\\
0 & a^{(2)}_{(0,1)}
\end{pmatrix}.
\end{eqnarray*}
Then it follows that
\begin{eqnarray*}\label{}
P(f \circ h) = Pf \circ Ph, \,\,\,{\rm and}\,\,\, P{\rm id} = {\rm id},
\end{eqnarray*}
where $h \in \rho(GU(1,1))$, and therefore 
\begin{eqnarray*}\label{}
Pf  \in GL(2, \mathbb{C})
\end{eqnarray*}
since $f$ is an automorphism.
Hence we have a representation of $GU(1,1)$ given by
\begin{eqnarray*}
GU(1,1) \ni g \longmapsto Pf \in GL(2,\mathbb{C}),
\end{eqnarray*}
where $f = \rho(g)$.
The restriction of this representation to the simple Lie group $SU(1,1)$
is nontrivial since $\rho(U(1) \times U(1)) = U(1) \times U(1)$.
However this contradicts Lemma~\ref{notsimple}.
Thus the case $\lambda < 0$ does not occur.

If $\lambda > 0$, then the Laurent series of the components of
$f \in \rho(GU(1,1))$ are
\begin{eqnarray}\label{tay1}
f_1(z_1, z_2) 
= \sum_{k \in \Z_{\geq 0}}
a^{(1)}_{\nu}z_1^{1-k|a_2|} z_2^{k|a_1|}
\end{eqnarray}
and
\begin{eqnarray}\label{tay2}
f_2(z_1, z_2) 
= \sum_{k \in \Z_{\geq 0}}
a^{(2)}_{\nu}z_1^{-k|a_2|} z_2^{1+k|a_1|}.
\end{eqnarray}
We claim that $a^{(1)}_{(1,0)} \neq 0$.
Indeed, if $a^{(1)}_{(1,0)} = 0$,
then $f(z_0, 0) = (0, 0)$.
This is a contradiction
since $f$ is an automorphism.
Take another $h \in \rho(GU(1,1))$
and put the Laurent series of its components:
\begin{eqnarray*}
h_1(z_1, z_2) 
&=& \sum_{k \in \Z_{\geq 0}}
b^{(1)}_{\nu}z_1^{1-k|a_2|} z_2^{k|a_1|},
\\
h_2(z_1, z_2) 
&=& \sum_{k \in \Z_{\geq 0}}
b^{(2)}_{\nu}z_1^{-k|a_2|} z_2^{1+k|a_1|}.
\end{eqnarray*}
We have $b^{(1)}_{(1,0)} \neq 0$ as above.
We mention the first degree terms of $f \circ h$.
For the first component
\begin{eqnarray*}
f_1(h_1, h_2) 
&=& a^{(1)}_{(1,0)}h_1 + 
\sum_{k \in \Z_{\geq 1}}
a^{(1)}_{\nu}h_1^{1-k|a_2|} h_2^{k|a_1|}.
\end{eqnarray*}
Then, for $k \geq 1$,
\begin{eqnarray*}
h_1(z)^{1-k|a_2|} &=& \left(\sum_{l = 0}^{\infty}
b^{(1)}_{\nu} z_1^{1-l|a_2|} z_2^{l|a_1|}\right)^{1-k|a_2|}
=
z_1^{1-k|a_2|} \left(\sum_{l = 0}^{\infty}
b^{(1)}_{\nu} z_1^{-l|a_2|} z_2^{l|a_1|}\right)^{1-k|a_2|}\\
&=&
\left(b^{(1)}_{(1,0)}z_1\right)^{1-k|a_2|} 
\left(1+\frac{1-k|a_2|}{b^{(1)}_{(1,0)}} b^{(1)}_{(1-|a_2|, |a_1|)} z_1^{-|a_2|} z_2^{|a_1|}
+ \cdots \right)
\end{eqnarray*}
Thus
$h_1(z)^{1-k|a_2|}$ has the maximum degree of $z_1$ at most $1 - k|a_2| < 1$
and has the minimum degree of $z_2$ at least $|a_1| > 1$
in its Laurent expansion.
For $k>0$,
$h_2^{k|a_1|}$ has the maximum degree of $z_1$ at most $-|a_2| < 0$
and
has the minimum degree of $z_2$ at least $k|a_1| > 1$
in its Laurent expansion.
Hence the first degree term of Laurent expansion of $f_1(h_1, h_2)$ 
is $a^{(1)}_{(1,0)} b^{(1)}_{(1,0)} z_1$.
Similarly, consider
\begin{equation*}
f_2(h_1, h_2) 
= a^{(2)}_{(0,1)} h_2 + 
\sum_{k=1}^{\infty} a^{(2)}_{\nu} h_1^{-k|a_2|}h_2^{1+k|a_1|}.
\end{equation*}
Then, for $k>0$,
\begin{eqnarray*}
h_1^{-k|a_2|}
&=& \left(b^{(1)}_{(1,0)}z_1\right)^{-k|a_2|} 
\left(1-\frac{k|a_2|}{b^{(1)}_{(1,0)}} b^{(1)}_{(1-|a_2|, |a_1|)} z_1^{-|a_2|} z_2^{|a_1|}
+ \cdots \right),
\end{eqnarray*}
therefore, $h_1^{-k|a_2|}$ has the maximum degree of $z_1$ at most $- k|a_2| < 0$
 and has the minimum degree of $z_2$ at least $|a_1| > 1$
 in its Laurent expansion.
For $k>0$, $h_2^{1+k|a_1|}$
has the maximum degree of $z_1$ at most $-|a_2| < 0$
and has the minimum degree of $z_2$ at least $1+k|a_1| > 1$
in its Laurent expansion.
Hence the first degree term of the Laurent expansion of $f_2(h_1, h_2)$ 
is 
$a^{(2)}_{(0,1)} b^{(2)}_{(0,1)} z_2$.
Consequently, the first degree terms of the Laurent series of the
components of the composite
$f \circ h$ are the composites of the first degree terms of Laurent expansions of
$f$ and $h$.
We put
\begin{eqnarray*}\label{}
Pf (z) := \left(a^{(1)}_{(1,0)}z_1, a^{(2)}_{(0,1)}z_2\right).
\end{eqnarray*}
As a matrix we can write
\begin{eqnarray*}
Pf = 
\begin{pmatrix}
a^{(1)}_{(1,0)} & 0\\
0 & a^{(2)}_{(0,1)}
\end{pmatrix}.
\end{eqnarray*}
Then it follows from above computation that
\begin{eqnarray*}\label{}
P(f \circ h) = Pf \circ Ph, \,\,\,{\rm and}\,\,\, P{\rm id} = {\rm id},
\end{eqnarray*}
and therefore 
\begin{eqnarray*}\label{}
Pf  \in GL(2, \mathbb{C})
\end{eqnarray*}
since $f$ is an automorphism.
Hence we have a representation of $GU(1,1)$ given by
\begin{eqnarray*}
GU(1,1) \ni g \longmapsto Pf \in GL(2,\mathbb{C}),
\end{eqnarray*}
where $f = \rho(g)$.
The restriction of this representation to the simple Lie group $SU(1,1)$
is nontrivial since $\rho(U(1) \times U(1)) = U(1) \times U(1)$.
However this contradicts Lemma~\ref{notsimple}.
Thus the case $\lambda < 0$ does not occur either.

Next we assume $|a_1| = 1$, that is, $\lambda \in \Z$.
We put an automorphism $\varpi_{\lambda} \in \aut(\Omega)$ by 
$\varpi_{\lambda}(z) = (z_1,z_1^{\lambda}z_2)$.
Then $\varpi_{\lambda}^{-1} \rho(GU(1,1)) \varpi_{\lambda}$ is
a subgroup of $\aut(\Omega)$.
Then, 
\begin{eqnarray*}
\varpi_{\lambda}^{-1}\circ  \rho(e^{-2\pi t})\circ  \varpi_{\lambda} (z_1, z_2)
&=&\varpi_{\lambda}^{-1}(e^{2\pi i(b_1+ic_1)t}z_1, e^{2\pi i(b_2+ic_2)t}z_1^{\lambda}z_2)\\
&=&(e^{2\pi i(b_1+ic_1)t}z_1, e^{2\pi i\{-\lambda(b_1+ic_1)+(b_2+ic_2)\}t}z_2)\\
&=&(e^{2\pi i(b_1+ic_1)t}z_1, e^{2\pi i\lambda(-b_1+b_2)t}z_2).
\end{eqnarray*}
Here we used the equations
$-\lambda c_1+c_2=0$
by Lemma~\ref{n1cl}.
Hence this case turns out to the case $c_2=0$ of the $\C^*$-action.
However, by Lemma~\ref{c_200}, this is a contradiction.
Thus this case does not occur.

Consequently, the case $\Omega = \C^* \times \C$
does not occur.
Either, the case $\Omega = \C \times \C^*$ does not occur.

\medskip
{\bf Case: $\C^* \times \C^*$}\\
Suppose $\Omega = \C^* \times \C^*$.
Since $a_1$ and $a_2$ are relatively prime,
there exists 
\begin{eqnarray*}\label{}
A =
\begin{pmatrix}
a_{11} & a_{12}\\
a_{21} & a_{22}
\end{pmatrix}
\in SL(2,\Z),
\end{eqnarray*}
such that
\begin{eqnarray*}\label{}
\begin{pmatrix}
a_{11} & a_{12}\\
a_{21} & a_{22}
\end{pmatrix}
\begin{pmatrix}
a_2 \\
-a_1 
\end{pmatrix}
=
\begin{pmatrix}
1 \\
0 
\end{pmatrix}.
\end{eqnarray*}
We put an automorphism $\gamma_A \in \aut(\Omega)$ given by
$\gamma_A(z) = (z_1^{a_{11}}z_2^{a_{21}}, z_1^{a_{12}}z_2^{a_{22}})$.
The inverse of $\gamma_A$
is $\gamma_A^{-1}(z)=(z_1^{a_{22}}z_2^{-a_{21}}, z_1^{-a_{12}}z_2^{a_{11}})=\gamma_{A^{-1}}(z)$.
Then the group
$\gamma_A^{-1} \rho(GU(1,1)) \gamma_A$ is a subgroup of $\aut(\Omega)$,
and 
\begin{eqnarray*}
\gamma_A^{-1}\circ  \rho(e^{-2\pi t})\circ  \gamma_A (z_1, z_2)
&=&\gamma_A^{-1}(e^{2\pi i\{(b_1+ic_1)t\}}z_1^{a_{11}}z_2^{a_{21}}, e^{2\pi i\{(b_2+ic_2)t\}}z_1^{a_{12}}z_2^{a_{22}})\\
&=&(e^{2\pi i\{a_{22}(b_1+ic_1)-a_{21}(b_2+ic_2)\}t}z_1, e^{2\pi i\{-a_{12}(b_1+ic_1)+a_{11}(b_2+ic_2)\}t}z_2)\\
&=&(e^{2\pi i(a_{22}b_1-a_{21}b_2)t}z_1, e^{2\pi i\{(-a_{12}b_1+a_{11}b_2)+ic_1/a_1\}t}z_2).
\end{eqnarray*}
Here we used the equations
$a_{22}c_1-a_{21}c_2=c_1(a_{22}-a_{21}c_2/c_1)=c_1(a_{22}-a_{21}a_2/a_1)=c_1/a_1(a_{22}a_1-a_{21}a_2)=0$
and $-a_{12}c_1+a_{11}c_2=c_1(-a_{12}+a_{11}c_2/c_1)=c_1(-a_{12}+a_{11}a_2/a_1)=c_1/a_1$
which follows from Lemma~\ref{n1cl}.
Hence this case turns out to the case $c_1=0$ of the $\C^*$-action.
However, by Lemma~\ref{c_200}, this is a contradiction.
Thus this case does not occur.

\end{proof}

\medskip
Let us consider the case 
$\Omega \cap (\mathbb{C}^* \times \mathbb{C}^*)
\neq \mathbb{C}^* \times \mathbb{C}^*$.
Then we have
$\partial \Omega \cap (\mathbb{C}^* \times \mathbb{C}^*)
\neq \emptyset$.
Thus we can take a point 
\begin{eqnarray*} 
p=(p_1, p_2) \in \partial \Omega \cap 
(\mathbb{C}^* \times \mathbb{C}^*).
\end{eqnarray*} 
Let 
\begin{eqnarray*} 
&&a := \left(|p_2|^2/|p_1|^{2\lambda}\right)^{|a_1|} > 0,\\
&&A_{a,\lambda} := \{ (z_1, z_2) \in \mathbb{C}^{2} : 
-a|z_1|^{2|a_1|\lambda} + |z_2|^{2|a_1|} = 0\}.
\end{eqnarray*}
Note that 
\begin{equation*} 
\partial \Omega  \supset A_{a,\lambda},
\end{equation*}
by the $G$-action of type (\ref{GU11}).
If $\lambda > 0$,
then $\Omega$ is contained in
\begin{equation*} 
D^{+}_{a,a_1,a_2} = \{ |z_2|^{2|a_1|} > a|z_1|^{2|a_2|}\}
\end{equation*}
or 
\begin{equation*} 
C^{+}_{a,a_1,a_2} = \{ |z_2|^{2|a_1|} < a|z_1|^{2|a_2|}\}.
\end{equation*}
If $\lambda < 0$,
then $\Omega$ is contained in
\begin{equation*} 
D^{-}_{a,a_1,a_2} = \{ |z_2|^{2|a_1|}|z_1|^{2|a_2|} > a\}
\end{equation*}
or 
\begin{equation*} 
C^{-}_{a,a_1,a_2} = \{ |z_2|^{2|a_1|}|z_1|^{2|a_2|} < a\}.
\end{equation*}
Clearly, $D^{+}_{a,a_1,a_2}$ and
$C^{+}_{a^{-1},a_2,a_1}$ are biholomorphically equivalent,
so we analyze $D^{+}_{a,a_1,a_2}$.
Let us first consider the case $\partial \Omega = A_{a,\lambda}$,
that is, $\Omega = D^{+}_{a,a_1,a_2}$,
$D^{-}_{a,a_1,a_2}$ or $C^{-}_{a,a_1,a_2}$.

\begin{lemma}\label{4.4}
If $\Omega = D^{+}_{a,a_1,a_2}$,
then $|a_2|=1$ and $\Omega$ is biholomorphic to $D^{1,1}$.
\end{lemma}

\begin{proof}
Since
$\Omega \cap \{z_1=0\} \neq \emptyset$,
by (\ref{form1}) and (\ref{form2}), 
the Laurent series of the components of $f \in \rho(GU(1,1))$ are
\begin{eqnarray*}\label{}
f_1(z_1, z_2) 
= \sum_{k \in \Z_{\geq -1}}
a^{(1)}_{\nu}z_1^{1+k|a_2|} z_2^{-k|a_1|}
\end{eqnarray*}
and
\begin{eqnarray*}\label{}
f_2(z_1, z_2) 
= \sum_{k \in \Z_{\geq 0}}
a^{(2)}_{\nu}z_1^{k|a_2|} z_2^{1-k|a_1|}
\end{eqnarray*}
where, $a^{(1)}_{(1-|a_2|,|a_1|)}=0$ if $|a_2| \neq 1$.
However if $|a_2| \neq 1$, this contradicts Lemma~\ref{notsimple}
as the proof of Lemma \ref{4.3}, for {\bf Case: $\C^* \times \C$}, $\lambda>0$.
Therefore $\Omega = \{ |z_2|^{2|a_1|} > a|z_1|^2\}$,
and $\Omega$ is biholomorphic to $D^{1,1}$ by
\begin{eqnarray*}
\Phi : \Omega \ni (z_1, z_2) \longmapsto (a^{1/2}z_1z_2^{1-|a_1|}, z_2) \in D^{1,1}.
\end{eqnarray*}

\end{proof}

\begin{lemma}\label{4.61}
$\Omega \neq D^{-}_{a,a_1,a_2}$.
\end{lemma}

\begin{proof}
Assume $\Omega = D^{-}_{a,a_1,a_2}$.
Then
we see that, using the biholomorphism
$\gamma_A^{-1}(z)=(z_1^{a_{22}}z_2^{-a_{21}}, z_1^{-a_{12}}z_2^{a_{11}})$(see {\bf Case:} $\C^* \times \C^*$ in the proof of Lemma \ref{4.3}),
$D^{-}_{a,\lambda}$ is biholomorphic to
$(\mathbb{C} \setminus \overline{\D_a}) \times \C^*$.
Then the group
$\gamma_A^{-1} \rho(GU(1,1)) \gamma_A$ is a subgroup of $\aut(\gamma_A^{-1}(\Omega))$,
and
\begin{eqnarray*}
\gamma_A^{-1}\circ  \rho(e^{-2\pi t})\circ  \gamma_A (z_1, z_2)
=(e^{2\pi i(a_{22}b_1-a_{21}b_2)t}z_1, e^{2\pi i\{(-a_{12}b_1+a_{11}b_2)+ic_1/a_1\}t}z_2),
\end{eqnarray*}
by Lemma~\ref{n1cl}.
Hence this case turns out to the case $c_1=0$ of the $\C^*$-action.
However, by Lemma~\ref{c_200}, this is a contradiction.
Thus this case does not occur.

\end{proof}

\begin{lemma} \label{4.6}
$\Omega \neq C^{-}_{a,a_1,a_2}$.
\end{lemma}

\begin{proof}
Suppose $\Omega = C^{-}_{a,a_1,a_2}$.
By (\ref{form1}) and (\ref{form2}),
the Laurent series of the components of $f \in \rho(GU(1,1))$ are
\begin{eqnarray*}
f_1(z_1,z_2) 
=  \sum_{k=0}^{\infty} 
a^{(1)}_{\nu}z_1^{1+k|a_2|}z_2^{k|a_1|},
\end{eqnarray*}
and
\begin{eqnarray*}
f_2(z_1,z_2)
= \sum_{k=0}^{\infty}
a^{(2)}_{\nu}z_1^{k|a_2|}z_2^{1+k|a_1|},
\end{eqnarray*}
since $C^{-}_{a,a_1,a_2} \cap \{z_i=0\} \neq \emptyset$, for $i=1,2$.
Consider 
\begin{eqnarray*}
Pf (z) = \left(a^{(1)}_{(1,0)}z_1, a^{(2)}_{(0,1)}z_2\right).
\end{eqnarray*}
As in the proof of Lemma \ref{4.3},
it follows that
\begin{eqnarray*}\label{}
P(f \circ h) = Pf \circ Ph, \,\,\,{\rm and}\,\,\, P{\rm id} = {\rm id},
\end{eqnarray*}
and therefore 
\begin{eqnarray*}\label{}
Pf  \in GL(2, \mathbb{C})
\end{eqnarray*}
since $f$ is an automorphism.
Hence we have a representation of $GU(1,1)$ given by
\begin{eqnarray*}
GU(1,1) \ni g \longmapsto Pf \in GL(2,\mathbb{C}),
\end{eqnarray*}
where $f = \rho(g)$.
The restriction of this representation to the simple Lie group $SU(1,1)$
is nontrivial since $\rho(U(1) \times U(1)) = U(1) \times U(1)$.
However this contradicts Lemma~\ref{notsimple}.
Thus the lemma is proved.

\end{proof}

\medskip
Let us consider the case $\partial \Omega \neq A_{a,\lambda}$.
We prove that in this case $GU(1,1)$ does not act
effectively on $\Omega$, except {\bf Case (I'-ii)} below.

\smallskip

{\bf Case (I')}:
$(\partial \Omega \setminus A_{a,\lambda}) \cap (\C^* \times \C^*) = \emptyset$.
\\
In this case,
$\partial \Omega$ is the union of $A_{a,\lambda}$ and some of the following sets
\begin{equation}\label{set}
\{0\} \times \C^*,
\C^* \times \{0\}
\,\,\mathrm{or} \,\,
\{0\} \subset \C^{2},
\end{equation}
by the $G$-actions on the boundary of type 
(\ref{GU22}), (\ref{GU33}) and (\ref{GU44}).
If $\Omega \subset D^{-}_{a,a_1,a_2}$,
then the sets in (\ref{set})
can not be contained in the boundary of $\Omega$.
Thus we must consider only the case $\Omega \subsetneq D^{+}_{a,a_1,a_2}$
or $C^{-}_{a,a_1,a_2}$.

\smallskip
{\bf Case (I'-i)} : $\Omega \subsetneq D^{+}_{a,a_1,a_2}$.\\
In this case, $\mathbb{C}^* \times \{0\}$ can not be a subset of the boundary of
$\Omega$,
and $\{0\} \in A_{a,\lambda}$.
Thus 
\begin{eqnarray*}
&&\partial \Omega = A_{a,\lambda} \cup (\{0\} \times \C),\\
&&\Omega = D^{+}_{a,a_1,a_2} \setminus (\{0\} \times \C).
\end{eqnarray*}
Then $\Omega$ is biholomorphic to $D^{-}_{a,a_1,a_2}$,
and this contradicts Lemma~\ref{4.61}.
Thus this case does not occur.

\smallskip
{\bf Case (I'-ii)}:
$\Omega \subsetneq C^{-}_{a,a_1,a_2}$.\\
In this case, $\Omega$ coincides with one of the followings:
\begin{eqnarray*}
&&C_1=C^{-}_{a,a_1,a_2} \setminus (\mathbb{C} \times \{0\}) \cup (\{0\} \times \mathbb{C}),
\\
&&C_2=C^{-}_{a,a_1,a_2} \setminus (\mathbb{C} \times \{0\}),\\
&&C_3=C^{-}_{a,a_1,a_2} \setminus (\{0\} \times \mathbb{C}),\\
&&C_4=C^{-}_{a,a_1,a_2} \setminus \{0\}.
\end{eqnarray*}
Then we see that
$C_1 \subset \mathbb{C}^* \times \mathbb{C}^*$,
and using
biholomorphism
$\gamma_A^{-1}$ (see {\bf Case:} $\C^* \times \C^*$ in the proof of Lemma \ref{4.3}),
$C_1$ is biholomorphic to $\D_{a}\setminus\{0\} \times \C^*$.
Then, as the proof of Lemma~\ref{4.61}, this contradicts Lemma~\ref{c_200}.
Thus this case does not occur.

Suppose $\Omega = C_2$.
In this case, we have $\Omega \simeq D^{+}_{a,a_1,a_2}$, and therefore $\Omega \simeq D^{1,1}$
by Lemma~\ref{4.4}.

If $\Omega = C_3$,
then as the case $C_2$ above,
we see that $\Omega \simeq C^{+}_{a,a_1,a_2} \simeq D^{+}_{a^{-1},a_2,a_1}$, and $\Omega \simeq D^{1,1}$
by Lemma~\ref{4.4}.

Suppose $\Omega = C_4$.
Then $\Omega \cap \{z_i = 0\} \neq \emptyset$ for $i=1,2$.
Hence as the proof of Lemma \ref{4.6}, we derive a contradiction.
Thus this case does not occur.

\bigskip

{\bf Case (I\hspace{-.1em}I')}: 
$(\partial \Omega \setminus A_{a,\lambda}) \cap (\mathbb{C}^* \times \mathbb{C}^*) \neq \emptyset$.
\\
In this case, we can take a point 
$p'=(p'_1, p'_2) \in (\partial \Omega \setminus A_{a,\lambda}) \cap (\mathbb{C}^* \times \mathbb{C}^*)$.
We put 
\begin{eqnarray*}
&&b := (|p'_2|^2/|p'_1|^{2\lambda})^{|a_1|} > 0,\\
&&B_{b,\lambda} := \{(z_1, z_2) \in \mathbb{C}^{2} : -b|z_1|^{2|a_1|\lambda} + |z_2|^{2|a_1|} = 0\}.
\end{eqnarray*}
We may assume $a>b$ without loss of generality.

\smallskip

{\bf Case (I\hspace{-.1em}I'-i)}:
$\partial \Omega = A_{a,\lambda} \cup B_{b,\lambda}$. \\
Since $\Omega$ is connected, it coincides with
\begin{eqnarray*} 
C^{+}_{a,a_1,a_2} \cap D^{+}_{b,a_1,a_2} 
= \{ b|z_1|^{2|a_2|} < |z_2|^{2|a_1|} < a|z_1|^{2|a_2|} \},
\end{eqnarray*}
or
\begin{eqnarray*} 
C^{-}_{a,a_1,a_2} \cap D^{-}_{b,a_1,a_2} 
= \{ b < |z_1|^{2|a_2|}|z_2|^{2|a_1|} < a \}.
\end{eqnarray*}
These domains are subdomains of $\mathbb{C}^* \times \mathbb{C}^*$.
We see that 
using the biholomorphism
$\gamma_A^{-1}$ (see {\bf Case} $\C^* \times \C^*$ in the proof of Lemma \ref{4.3}),
these domains are biholomorphic to $\D_{(a,b)} \times \C^*$,
where
$\D_{(a,b)} = \{b < |z_1|^{2} < a\}$.
Indeed,
\[
\gamma_A^{-1}(C^{+}_{a,a_1,a_2} \cap D^{+}_{b,a_1,a_2}) = 
\{ b|z_1^{a_{11}}z_2^{a_{21}}|^{2|a_2|} < |z_1^{a_{12}}z_2^{a_{22}}|^{2|a_1|} < a|z_1^{a_{11}}z_2^{a_{21}}|^{2|a_2|} \}
= \{ b < |z_1|^{\pm 2} < a \},
\]
and similarly
\[
\gamma_A^{-1}(C^{-}_{a,a_1,a_2} \cap D^{-}_{b,a_1,a_2}) = 
\{ b < |z_1^{a_{11}}z_2^{a_{21}}|^{2|a_2|}|z_1^{a_{12}}z_2^{a_{22}}|^{2|a_1|} < a \}
= \{ b < |z_1|^{\pm 2} < a \}.
\]
Then, as the proof of Lemma~\ref{4.61}, this contradicts Lemma~\ref{c_200}.
Thus this case does not occur.

\smallskip

{\bf Case (I\hspace{-.1em}I'-ii)}: 
$\partial \Omega \neq A_{a,\lambda} \cup B_{b,\lambda}$.
\\
Suppose $(\partial \Omega \setminus A_{a,\lambda} \cup B_{b,\lambda}) \cap 
(\mathbb{C}^* \times \mathbb{C}^*) \neq \emptyset$,
then we can take 
\begin{equation*} 
p''= (p''_1, p''_2) \in (\partial \Omega \setminus A_{a,\lambda} \cup B_{b,\lambda}) 
\cap (\mathbb{C}^* \times \mathbb{C}^*).
\end{equation*}
Then put
\begin{eqnarray*} 
&&c=(|p''_2|^2/|p''_1|^{2\lambda})^{|a_1|},\\
&&C_{c,\lambda} = \{(z_1, z_2) \in \mathbb{C}^{2} : 
-c|z_1|^{2|a_1|\lambda} + |z_2|^{2|a_1|} = 0\}.
\end{eqnarray*}
We have $A_{a,\lambda} \cup B_{b,\lambda} \cup C_{c,\lambda} \subset \partial \Omega$.
However $\Omega$ is connected, this is impossible. 
Therefore this case does not occur.
Let us consider the remaining case:
\begin{eqnarray*} 
(\partial \Omega \setminus A_{a,\lambda} \cup B_{b,\lambda}) \cap
(\mathbb{C}^* \times \mathbb{C}^*) = \emptyset.
\end{eqnarray*}
However,
$\mathbb{C}^* \times \{0\}$, $\{0\} \times \mathbb{C}^*$
and $\{0\} \in \mathbb{C}^{2}$
can not be subsets of the boundary of $\Omega$
since $\Omega \subset C^{+}_{a,\lambda} \cap D^{+}_{b,\lambda}$
or
$\Omega \subset C^{-}_{a,\lambda} \cap D^{-}_{b,\lambda}$.
Thus this case does not occur either.

\smallskip
We have shown that if $c_1c_2 \neq 0$,
then $\Omega$ is biholomorphic to one of the following:
\begin{eqnarray*} 
\C^{2}, \C^{2} \setminus \{0\}\,\, \mathrm{or}\,\, D^{1,1}.
\end{eqnarray*}

\end{proof}

\section{The actions of $GU(p,q)$ for $p, q > 1$}
\label{sec:5}

Now we prove the following theorem.

\begin{theorem}  \label{M3}
Let $p, q >1$ and $n = p + q$.
Let $M$ be a connected complex manifold of dimension $n$ 
that is holomorphically separable
and admits a smooth envelope of holomorphy.
Assume that there exists an injective homomorphism of topological groups
$\rho_0 : GU(p,q) \longrightarrow \mathrm{Aut}(M)$.
Then $M$ is biholomorphic to one of $\C^{n}$, $\C^{n} \setminus \{0\}$, $D^{p,q}$ or
$C^{p,q}$.
\end{theorem}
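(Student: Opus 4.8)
The plan is to adapt the argument of Theorem~\ref{M1} in Section~\ref{sec:3}, exploiting a simplification available precisely because both $p>1$ and $q>1$. By Lemma~\ref{4} and the remarks following it, I may assume $M$ is a Reinhardt domain $\Omega\subset\C^{n}$ on which $U(q)\times U(p)$ acts linearly, as matrix multiplication on the first $q$ and the last $p$ coordinates respectively, and $\rho(T^{n})=T^{n}$. Writing the coordinates as $(z,z')$ with $z\in\C^{q}$, $z'\in\C^{p}$, Lemma~\ref{KS1} shows $\rho(\C^*)$ consists of diagonal matrices, and commutation with $U(q)\times U(p)$ forces
\[
\rho(e^{2\pi i(s+it)})=\mathrm{diag}\bigl[e^{2\pi i\{a_1 s+(b_1+ic_1)t\}}E_q,\ e^{2\pi i\{a_2 s+(b_2+ic_2)t\}}E_p\bigr],
\]
with $a_1,a_2\in\Z$ relatively prime and $(c_1,c_2)\neq(0,0)$. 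The decisive point is that Lemma~\ref{LU} now applies to \emph{both} blocks, so every component of every $f\in\rho(GU(p,q))$ is an honest power series in $z,z'$ with no negative exponents; this is what eliminates the $\C\times\B^{n}$--type domains occurring in Theorem~\ref{M1}.

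Assume for now that $c_1c_2\neq0$. As in Section~\ref{sec:3}, centrality of $\C^*$ gives, for each monomial $z^{\nu}$ occurring in a first--block component, the relations $a_1(|\nu^{(1)}|-1)+a_2|\nu^{(2)}|=0$ and $c_1(|\nu^{(1)}|-1)+c_2|\nu^{(2)}|=0$, and the analogous relations with $|\nu^{(2)}|-1$ for a second--block component, where $|\nu^{(1)}|,|\nu^{(2)}|\ge 0$ are the total degrees of $z^{\nu}$ in the two blocks. Passing to the degree--one part $Pf$ defines a homomorphism $GU(p,q)\to GL(n,\C)$ exactly as in Lemma~\ref{cla1}, whose restriction to $SU(p,q)$ is nontrivial because $\rho(U(q)\times U(p))=U(q)\times U(p)$. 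Unless $a_2/a_1=c_2/c_1=1$, the surviving degree--one monomials make $Pf$ block diagonal in $GL(q,\C)\times GL(p,\C)$; its restriction to $SU(p,q)$ would then land in two factors of dimension $<n$ and be trivial by Lemma~\ref{notsimple}, a contradiction. Hence $a_1=a_2=\pm1$ and $c_1=c_2$, and the exponent relations reduce to $|\nu^{(1)}|+|\nu^{(2)}|=1$ for every monomial, so every $f\in\rho(GU(p,q))$ is \emph{linear}.

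Next I rule out $c_1c_2=0$; say $c_2=0$, $c_1\neq0$ (the other case is symmetric, since both blocks carry $U(\,\cdot\,)$ with index $>1$). The relations force the second--block components to depend only on $z'$ and the first--block components to be linear in $z$ with $z'$--dependent coefficients, so $\rho(GU(p,q))$ acts through its second--block components on a $U(p)$--invariant slice $D'\subset\C^p$. If $D'$ is biholomorphic to a ball, the induced map $SU(p,q)\to\aut(D')=PU(p,1)$ is trivial by Lemma~\ref{notsimple2}, since $p+q>p+1$; if $D'$ is $\C^p$ or $\C^p\setminus\{0\}$ one linearizes by Lemmas~\ref{extG} and~\ref{linG} and applies Lemma~\ref{notsimple} ($p<n$). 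Either way the action on $D'$ would be trivial, contradicting that $\{E_q\}\times SU(p)\subset SU(p,q)$ acts nontrivially there. Thus $c_1c_2\neq0$.

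Consequently every element of $\rho(GU(p,q))$ is linear with $c_1=c_2$, and it remains to identify $\Omega$. The centralizer $G=\rho(G(U(q)\times U(p)))$ acts linearly, and with $c_1=c_2$ its orbits through points with $z\neq0\neq z'$ are the cones $\{|z_{q+1}|^{2}+\cdots+|z_{p+q}|^{2}=c(|z_1|^2+\cdots+|z_q|^2)\}$, together with the coordinate sets $\{z=0\}$, $\{z'=0\}$ and $\{0\}$; hence $\partial\Omega$ is a union of these. Since $\Omega$ is connected, at most one cone can occur in $\partial\Omega$: the annular case bounded by two distinct cones is excluded because a connected linear group preserving two cones $-c_i(|z_1|^2+\cdots+|z_q|^2)+|z_{q+1}|^{2}+\cdots+|z_{p+q}|^{2}=0$ must preserve the degenerate form $\mathrm{diag}[E_q,0]$, hence a proper subspace, and is therefore block diagonal, contradicting Lemma~\ref{notsimple} on $SU(p,q)$. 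If no cone occurs then $\partial\Omega\subset\{0\}$ and $\Omega=\C^{n}$ or $\C^{n}\setminus\{0\}$. If exactly one cone occurs, a rescaling $z_j\mapsto c^{1/2}z_j$ normalizes it to the null cone of $Q(z)=-|z_1|^2-\cdots-|z_q|^2+|z_{q+1}|^{2}+\cdots+|z_{p+q}|^{2}$, connectedness forbids deleting any interior cone, and $\Omega$ becomes $D^{p,q}=\{Q>0\}$ or $C^{p,q}=\{Q<0\}$; Lemma~\ref{condGU} then confirms $\rho(GU(p,q))\subset GU(p,q)$. I expect this final boundary bookkeeping---enumerating the admissible unions of $G$--orbits and excluding the annular and subspace--deleted variants via Lemma~\ref{notsimple}---to be the most delicate part, just as Cases~I and~II are in Section~\ref{sec:3}.
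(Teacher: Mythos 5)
Your proposal is correct and follows essentially the same route as the paper's Section~\ref{sec:5}: standardization to a Reinhardt domain, the weight relations coming from centrality of $\C^*$, the degree-one homomorphism $Pf$ (block diagonal unless $a_1=a_2=\pm1$) to force linearity via Lemma~\ref{notsimple}, irreducibility of the resulting linear representation, and the $G$-orbit analysis of $\partial\Omega$. The only points to tidy are that in the $c_1c_2=0$ case the $U(p)$-invariant slice $D'$ can also be an exterior, punctured or annular ball (disposed of by Hartogs extension and $\aut(D')=U(p)$ plus Lemma~\ref{notsimple}, exactly as in the paper's Lemma~\ref{c_20''}), and that your exclusion of the two-cone annular region via simultaneous invariance of two Hermitian forms is a clean alternative to the paper's route through Lemma~\ref{condGU}.
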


\begin{proof}
The proof is similar to that of Theorem~\ref{M1} and in this case simpler.
By Lemma~\ref{4} and the comments after that, 
we can assume that $M$ is a Reinhardt domain $\Omega$ in $\C^{p+q}$,
$U(q) \times U(p)$-action on $\Omega \subset \C^{p+q}$ is linear
and $\rho(T^{p+q}) = T^{p+q}$.
We will 
prove that $\Omega$ is biholomorphic to  one of  the four domains
$\C^{p+q}$, $\C^{p+q} \setminus \{0\}$, $D^{p,q}$
or $C^{p,q}$.

Put a coordinate $(z_1, \ldots,z_q, z_{1+q}, \ldots, z_{p+q})$ of
$\mathbb{C}^{p+q}$.
Since $\rho(\mathbb{C}^*)$ is 
commutative with $\rho(T^{p+q}) = T^{p+q} \subset \aut(\Omega)$,
Lemma~\ref{KS1} tells us that $\rho(\mathbb{C}^*) \subset \Pi(\Omega)$,
that is, $\rho(\mathbb{C}^*)$ is represented by diagonal matrices.
Furthermore, $\rho(\mathbb{C}^*)$ commutes with 
$\rho(U(q) \times U(p)) = U(q) \times U(p)$,
so that we have 
\begin{eqnarray*}
\rho\left(e^{2\pi i(s+it)}\right) =
\mathrm{diag}\left[
e^{2\pi i\{a_1s+(b_1+ic_1)t\}}E_q, e^{2\pi i\{a_2s+(b_2+ic_2)t\}}E_p
\right]
\in \rho(\mathbb{C}^*),
\end{eqnarray*}
where
$s,t \in \mathbb{R}$, $a_1, a_2 \in \mathbb{Z}, b_1,b_2,c_1,c_2 \in \mathbb{R}$.
Since $\rho$ is injective,
$a_1, a_2$ are
relatively prime
and $(c_1, c_2) \neq (0, 0)$.
Since $T_{q,p}$
is the center of the group $U(q) \times U(p)$,
we have $\rho(T_{q,p}) = T_{q,p} \subset \mathrm{Aut}(\Omega)$.
To consider the actions of $\mathbb{C}^*$ and $U(q) \times U(p)$ 
on $\Omega$ together,
we put
\begin{equation*} 
G(U(q) \times U(p)) = 
\left\{e^{-2\pi t}\cdot
\mathrm{diag}\left[
U_1, U_2
\right]
\in GU(p,q) : t \in \mathbb{R}, U_1 \in U(q), U_2 \in U(p)
\right\}.
\end{equation*}
Then we have
\begin{eqnarray*} 
G &:=&\rho(G(U(q) \times U(p))) \\
&{}=&
\left\{
\mathrm{diag}\left[
e^{-2\pi c_1t}U_1, e^{-2\pi c_2t}U_2
\right]
\in GL(p+q,\mathbb{C}) : t \in \mathbb{R}, U_1 \in U(q), U_2 \in U(p)
\right\}.
\end{eqnarray*}
Note that $G$ is the centralizer
of $T_{q,p} = \rho(T_{q,p})$ in $\rho(GU(p, q))$.

Let $f =(f_1, \ldots, f_q, f_{1+q}, \ldots, f_{p+q}) \in \rho(GU(p, q))$ 
and consider the Laurent series of its components:\\
\begin{eqnarray}\label{eqfi''}
f_i(z_1, \ldots, z_q, z_{1+q} \ldots, z_{p+q}) 
= \sum_{\nu \in \mathbb{Z}^{p+q}} a^{(i)}_{\nu}z^{\nu},
\end{eqnarray}
As Lemma \ref{cl1} in the proof of Theorem~\ref{M1}, we have:
\begin{lemma}\label{cl1''}
For any $f \in \rho(GU(p, q)) \setminus G$,
there exists $\nu \in \mathbb{Z}^{p+q}$, $\nu \neq e_1$,
$\ldots,$ $e_q$,
such that $a^{(i)}_{\nu} \neq 0$ in $(\ref{eqfi''})$  for some $1 \leq i \leq q$,
or 
there exists
$\nu \in \mathbb{Z}^{p+q}$, $\nu \neq e_{1+q}$, 
$\ldots,$ $e_{p+q}$
such that
$a^{(i)}_{\nu} \neq 0$ in $(\ref{eqfi''})$ for some $1+q \leq i \leq p+q$,
where $e_1=(1,0,\ldots,0)$, $e_2=(0,1,0\ldots,0)$, $\ldots,e_{p+q}=(0,\ldots,0,1)$ are
the natural basis of $\Z^{p+q}$.
\end{lemma}

\medskip
By Lemma~\ref{LU},
there are no negative degree terms 
of $z_1,\ldots,z_{p+q}$ in $(\ref{eqfi''})$.
Write $\nu=(\nu', \nu'')$, where $\nu'=(\nu_1, \ldots, \nu_q)$ and 
$\nu''=(\nu_{1+q}, \ldots, \nu_{p+q})$,
and $|\nu'| = \nu_1 + \cdots + \nu_q$, $|\nu''| = \nu_{1+q} + \cdots + \nu_{p+q}$.
Let us consider $\nu' \in (\Z_{\geq 0})^q$ and 
$\nu'' \in (\Z_{\geq 0})^p$,
and put
\begin{eqnarray*}
{\sum_{\nu}}' = \sum_{\nu' \in (\Z_{\geq 0})^q, \nu'' \in (\Z_{\geq 0})^p},
\end{eqnarray*}
$(z')^{\nu'} = z_1^{\nu_1}\cdots z_n^{\nu_q}$ and
$(z'')^{\nu''} = z_{1+q}^{\nu_{1+q}}\cdots z_{p+q}^{\nu_{p+q}}$
from now on.
When we need to distinguish $\nu$ for $f_i$, $1 \leq i \leq p+q$,
we write
$\nu = \nu^{(i)} = (\nu_1^{(i)}, \ldots, \nu_{p+q}^{(i)})$.

Since $\mathbb{C}^*$ is the center of $GU(p,q)$, 
it follows that, 
for $f \in \rho(GU(n,1))$, 
\begin{eqnarray*}
\rho(e^{2\pi i(s+it)}) \circ f = f \circ \rho(e^{2\pi i(s+it)}).
\end{eqnarray*}
By $(\ref{eqfi''})$,
this equation means
\begin{eqnarray*}
e^{2\pi i\{a_1s+(b_1+ic_1)t\}} {\sum_{\nu}}' a^{(i)}_{\nu}z^{\nu}
&=& 
{\sum_{\nu}}' a^{(i)}_{\nu}
(e^{2\pi i\{a_1s+(b_1+ic_1)t\}}z')^{\nu'}
(e^{2\pi i\{a_2s+(b_2+ic_2)t\}}z'')^{\nu''}\\
&=&
{\sum_{\nu}}' a^{(i)}_{\nu}
e^{2\pi i\{a_1s+(b_1+ic_1)t\}|\nu'|}
e^{2\pi i\{a_2s+(b_2+ic_2)t\}|\nu''|}
z^{\nu}
\end{eqnarray*}
for $1 \leq i \leq q$,
and
\begin{eqnarray*}
e^{2\pi i\{a_2s+(b_2+ic_2)t\}} {\sum_{\nu}}' a^{(i)}_{\nu}z^{\nu}
&=&
{\sum_{\nu}}' a^{(i)}_{\nu}
(e^{2\pi i\{a_1s+(b_1+ic_1)t\}}z')^{\nu'}
(e^{2\pi i\{a_2s+(b_2+ic_2)t\}}z'')^{\nu''}\\
 &=&
{\sum_{\nu}}' a^{(i)}_{\nu}
e^{2\pi i\{a_1s+(b_1+ic_1)t\}|\nu'|}
e^{2\pi i\{a_2s+(b_2+ic_2)t\}|\nu''|}
z^{\nu},
\end{eqnarray*}
for $1+q \leq i \leq p+q$.
Thus for each $\nu \in \mathbb{Z}^{p+q}$, we have
\begin{eqnarray*}
e^{2\pi i\{a_1s+(b_1+ic_1)t\}} a^{(i)}_{\nu}
= e^{2\pi i\{a_1s+(b_1+ic_1)t\}|\nu'|}
e^{2\pi i\{a_2s+(b_2+ic_2)t\}|\nu''|}
a^{(i)}_{\nu},
\end{eqnarray*}
for $1 \leq i \leq q$, and
\begin{eqnarray*}
e^{2\pi i\{a_2s+(b_2+ic_2)t\}} a^{(i)}_{\nu}
= e^{2\pi i\{a_1s+(b_1+ic_1)t\}|\nu'|} e^{2\pi i\{a_2s+(b_2+ic_2)t\}|\nu''|} a^{(i)}_{\nu},
\end{eqnarray*}
for $1+q \leq i \leq p+q$.
Therefore, if
$a_{\nu}^{(i)}\neq0$ for
$1 \leq i \leq q$,
we have
\begin{eqnarray}\label{ac0''}
\hspace{-2cm}
\left\{\begin{array}{l}
a_1(\nu_1^{(i)} + \cdots + \nu_q^{(i)}-1) + a_2(\nu_{1+q}^{(i)} + \cdots + \nu_{p+q}^{(i)}) = 0,\\
c_1(\nu_1^{(i)} + \cdots + \nu_q^{(i)}-1) + c_2(\nu_{1+q}^{(i)} + \cdots + \nu_{p+q}^{(i)}) = 0.
\end{array}
\right.
\end{eqnarray}
and if $a_{\nu}^{(i)} \neq 0$
for
$1+q \leq i \leq p+q$,
we have
\begin{eqnarray}\label{aci''}
\hspace{-20mm}
\left\{\begin{array}{l}
a_1(\nu_{1}^{(i)} + \cdots + \nu_q^{(i)}) + a_2(\nu_{1+q}^{(i)} + \cdots + \nu_{p+q}^{(i)}-1) = 0,\\
c_1(\nu_1^{(i)} + \cdots + \nu_q^{(i)}) + c_2(\nu_{1+q}^{(i)} + \cdots + \nu_{p+q}^{(i)}-1) = 0.
\end{array}
\right.
\end{eqnarray}

\begin{lemma}\label{c_20''}
$c_1c_2 \neq 0$.
\end{lemma}

\begin{proof}
We assume $c_1=0$, $c_2 \neq 0$.
If $a_1 \neq 0$, then by (\ref{ac0''}),
$\nu_1^{(i)} + \cdots + \nu_q^{(i)} = 1$, 
$\nu_{1+q}^{(i)} + \cdots + \nu_{p+q}^{(i)} = 0$,
for $1 \leq i \leq q$,
and by (\ref{aci''})
$\nu_{1}^{(i)} + \cdots + \nu_q^{(i)} = 0$ and
$\nu_{1+q}^{(i)} + \cdots + \nu_{p+q}^{(i)} = 1$ for $1+q \leq i \leq p+q$. 
However this contradicts Lemma \ref{cl1''}.
Therefore we assume $a_1 = 0$.
As the proof of Lemma \ref{c_20},
$\Omega \subset  \mathbb{C}^{p+q}$ can be written
of the form
$(D \times \mathbb{C}^p) \cup \left(D' \times (\mathbb{C}^p \setminus \{0\})\right)$
by $G$-action on $\Omega$,
where $D$ and $D'$ are open sets in $\mathbb{C}^q$, $D\subset D'$ and $D'$ is connected.
On the other hand,
the functions $f_i$, $1 \leq i \leq q$, of
$f = (f_1, \ldots, f_{p+q}) \in \rho(GU(p,q))$ do
not depend on the variables $(z_{1+q}, \ldots, z_{p+q})$ by (\ref{ac0''}).
Hence $(f_1,\ldots,f_q)$ is an automorphism of $D$ and $D'$,
i.e. $GU(p,q)$ acts on $D$ and $D'$.
Since $U(q)$ acts linearly on $D'$,
the domain $D'$ must coincide with one of the following open sets:
$\C^q$, $\C^q \setminus \{0\}$, $\C^q \setminus \overline{\B^q_r}$,
$\B^q_r$, $\B^q_r \setminus \{0\}$ and $\B^q_r \setminus \overline{\B^q_{r'}}$,
where $\B^q_r = \{ z \in \C : |z|<r\}$ and $r>r'>0$.
Thus we have a topological group homomorphism from $GU(p,q)$ to 
one of the topological groups
$\aut(\C^q)$,
$\aut(\C^q \setminus \{0\})$, $\aut(\C^q \setminus \overline{\B^q_r})$,
$\aut(\B^q_r)$, $\aut(\B^q_r \setminus \{0\})$
and 
$\aut(\B^q_r \setminus \overline{\B^q_{r'}})$.

We now prove $D'\neq\C^q$.
By Lemma~\ref{extG},
the $U(p,q)$-action extends to a holomorphic action of $GL(p+q,\C)$.
The categorical quotient $\C^{p+q}/\hspace{-1mm}/GL(p+q,\C)$ is then one point
(see the sentence before Lemma~\ref{linG}).
By Lemma~\ref{linG}, the $GL(p+q,\C)$-action is linearizable.
However, the restriction of this action to $SU(p,q)$ is non-trivial
since $S(U(q)\times SU(p)) \subset \rho(SU(p,q))$ acts non-trivially on $\C^q$.
This contradicts Lemma~\ref{notsimple}.
Thus $D'\neq\C^q$.
Furthermore, $D'\neq \C^q \setminus \{0\}$,
$\C^q \setminus \overline{\B^q}$,
since the $U(p,q)$-action extends to the action on $\C^q$
by the Hartogs extension theorem.
Thus these cases come down to the $\C^q$-case.

$SU(p,q)$ can not act non-trivially on $\B^q_r$, $\B^q_r \setminus \{0\}$ and
$\B^q_r \setminus \overline{\B^q_{r'}}$ either.
Indeed, if $SU(p,q)$ acts,
then $\rho(SU(p,q)) \subset PU(q,1)$ for the first case,
and $\rho(SU(p,q)) \subset U(q)$ for the latter two cases.
Here we used $\aut(\B^q_r)=PU(q,1)$
and $\aut(\B^q_r \setminus \{0\})=\aut(\B^q_r \setminus \overline{\B^q_{r'}})=U(q)$.
By Lemma~\ref{notsimple2} and Lemma~\ref{notsimple}, these actions are trivial.
Thus, these cases do not occur. 
Consequently, we see that the case $c_1=0$, $c_2\neq 0$ does not occur.
In the same manner, we can see that the case $c_1\neq 0$, $c_2 = 0$
does not occur.
\end{proof}

Then we have
\begin{lemma}\label{cla1''}
$a_1=a_2$ and $|a_1|=|a_2|=1$.
\end{lemma}

\begin{proof}

Take $f \in \rho(GU(p,q)) \setminus G$
with the Laurent expansions (\ref{eqfi''}).
By Lemma \ref{cl1''}, (\ref{ac0''}) and (\ref{aci''}),
we have
\begin{eqnarray*}
\lambda:= a_2/a_1 = c_2/c_1 \in \mathbb{Q} \setminus \{0\}
\end{eqnarray*}
We first prove that $\lambda$ is positive.
For this, we suppose that $\lambda$ is negative.
Since 
$|\nu'|, |\nu''| \geq 0$ and
$a_1, a_2$ are relatively prime,
we have,
by (\ref{ac0''}), for $1 \leq i \leq q$,
\begin{eqnarray*}
\nu_{1}^{(i)} + \cdots + \nu_q^{(i)} = 1+k|a_2|
\,\,\,\,\,\mathrm{and}\,\,\,\,\, 
\nu_{1+q}^{(i)} + \cdots + \nu_{p+q}^{(i)} = k|a_1|,
\end{eqnarray*}
where $k \in \Z_{\geq 0}$,
and, by (\ref{aci''}), for $1+q \leq i \leq p+q$,
\begin{eqnarray*}
\nu_{1}^{(i)} + \cdots + \nu_q^{(i)} = l|a_2|
\,\,\,\,\,\mathrm{and}\,\,\,\,\,
\nu_{1+q}^{(i)} + \cdots + \nu_{p+q}^{(i)} = 1+l|a_1|,
\end{eqnarray*}
where $l \in \Z_{\geq 0}$.
Hence, the Laurent series of the components of $f \in \rho(GU(n,1))$ are
\begin{eqnarray}\label{form0''}
f_i(z', z'') 
= \sum_{k=0}^{\infty} \, {\sum_{|\nu'|=1+k|a_2|,\atop |\nu''|=k|a_1|}}^{\hspace{-13pt}\prime} \,\,\,
a^{(i)}_{\nu}(z')^{\nu'}(z'')^{\nu''},
\end{eqnarray}
for $1 \leq i \leq q$, and
\begin{eqnarray}\label{formi''}
f_i(z', z'') 
= \sum_{k=0}^{\infty} \, {\sum_{|\nu'|=k|a_2|,\atop |\nu''|=1+k|a_1|}}^{{\hspace{-13pt}\prime}} \,\,\,
a^{(i)}_{\nu}(z')^{\nu'}(z'')^{\nu''},
\end{eqnarray}
for $1+q \leq i \leq p+q$.
We focus on the first degree terms of the Laurent expansions.
We put
\begin{eqnarray}\label{Pf''}
\,\,\,\,\,\,\,\,\,\,\,\,\,\,\,\,\,\,\,\,\,
Pf (z) := \left(
{\sum_{|\nu'|=1,\atop |\nu''|=0}}^{{\hspace{-4pt}\prime}} a^{(1)}_{\nu'}z', \ldots,
{\sum_{|\nu'|=1,\atop |\nu''|=0}}^{{\hspace{-4pt}\prime}} a^{(q)}_{\nu'}z',
{\sum_{|\nu'|=0,\atop |\nu''|=1}}^{{\hspace{-4pt}\prime}} a^{(1+q)}_{\nu''}(z'')^{\nu''}, \ldots,
{\sum_{|\nu'|=0,\atop |\nu''|=1}}^{{\hspace{-4pt}\prime}} a^{(p+q)}_{\nu''}(z'')^{\nu'}\right).
\end{eqnarray}
As a matrix we can write
\begin{eqnarray*}
Pf = 
\begin{pmatrix}
a^{(1)}_{e_1} & \cdots & a^{(1)}_{e_q} & 0 & \cdots & 0 &\\
\vdots  & \ddots &  \vdots & \vdots & \ddots & \vdots \\
a^{(q)}_{e_1} & \cdots & a^{(q)}_{e_q} & 0 & \cdots & 0 \\
0 & \cdots & 0 & a^{(1+q)}_{e_{1+q}} & \cdots & a^{(1+q)}_{e_{p+q}}\\
 \vdots  & \ddots &  \vdots & \vdots & \ddots & \vdots\\
0 & \cdots & 0 & a^{(p+q)}_{e_{1+q}} & \cdots & a^{(p+q)}_{e_{p+q}}
\end{pmatrix}.
\end{eqnarray*}
Then it follows from (\ref{form0''}) and (\ref{formi''}) that
\begin{eqnarray*}\label{}
P(f \circ h) = Pf \circ Ph, \,\,\,{\rm and}\,\,\, P{\rm id} = {\rm id},
\end{eqnarray*}
where $h \in \rho(GU(p,q))$,
and therefore 
\begin{eqnarray*}\label{}
Pf  \in GL(p+q, \mathbb{C})
\end{eqnarray*}
since $f$ is an automorphism.
Hence we have a representation of $GU(p,q)$ given by
\begin{eqnarray*}
GU(p,q) \ni g \longmapsto Pf \in GL(p+q,\mathbb{C}),
\end{eqnarray*}
where $f = \rho(g)$.
The restriction of this representation to the simple Lie group $SU(p,q)$
is nontrivial since $\rho(U(q) \times U(p)) = U(q) \times U(p)$.
However this contradicts Lemma~\ref{notsimple}.
Thus $\lambda$ is positive.

Next we prove that $|a_1|=|a_2|=1$.
For this, we suppose $|a_1| \neq 1$.
Since $|\nu'|, |\nu''| \geq 0$ and
$a_1, a_2$ are relatively prime,
we have,
by (\ref{ac0''}),
\begin{eqnarray*}
\nu_{1}^{(i)} + \cdots + \nu_q^{(i)} = 1-k|a_2|
\,\,\,\,\,\mathrm{and}\,\,\,\,\, 
\nu_{1+q}^{(i)} + \cdots + \nu_{p+q}^{(i)} = k|a_1|,
\end{eqnarray*}
for $1 \leq i \leq q$, where $k=0$,
and, in addition, $k=1$ is valid if $|a_2|=1$.
By (\ref{aci''}),
\begin{eqnarray*}
\nu_{1}^{(i)} + \cdots + \nu_q^{(i)} = 0
\,\,\,\,\,\mathrm{and}\,\,\,\,\,
\nu_{1+q}^{(i)} + \cdots + \nu_{p+q}^{(i)} = 1,
\end{eqnarray*}
for $1+q \leq i \leq p+q$.
Hence, the Laurent series of the components of $f \in \rho(GU(p,q))$ are
\begin{eqnarray*}\label{form0'''}
f_i(z', z'') 
= \sum_{k=0}^{1} \, {\sum_{|\nu'|=1+k|a_2|,\atop |\nu''|=k|a_1|}}^{\hspace{-13pt}\prime} \,\,\,
a^{(i)}_{\nu}(z')^{\nu'}(z'')^{\nu''},
\end{eqnarray*}
for $1 \leq i \leq q$, and
\begin{eqnarray}\label{formi'''}
f_i(z', z'') 
= {\sum_{|\nu''|=1}}^{{\hspace{-5pt}\prime}} \,\,\,
a^{(i)}_{\nu}(z'')^{\nu''},
\end{eqnarray}
for $1+q \leq i \leq p+q$.
Thus, by (\ref{formi'''}),
we have a nontrivial linear representation of $GU(p,q)$ by
\begin{eqnarray*}
GU(p,q) \ni g \longmapsto (f_{1+q},\ldots,f_{p+q}) \in GL(p,\mathbb{C}),
\end{eqnarray*}
where $(f_1,\ldots,f_{p+q}) = \rho(g)$.
However this contradicts Lemma~\ref{notsimple}.
Thus we have shown that $|a_1|=1$.
In the same manner, we have $|a_2|=1$,
and therefore Lemma \ref{cla1''} is proven.

\end{proof}

\begin{remark}\label{rem1''}
By Lemma \ref{cla1''}, (\ref{ac0''}) and (\ref{aci''}),
the action of $GU(p,q)$ on $\Omega$ is linear matrix multiplication,
since $\nu^{(i)}_{\nu} \geq 0$ 
for $1 \leq i \leq p+q$.
Therefore 
\begin{eqnarray*}
\rho: GU(p,q) \longrightarrow GL(p+q,\mathbb{C}),
\end{eqnarray*}
and this representation is irreducible by Lemma \ref{notsimple}.
\end{remark}

\medskip
Since $G=\rho(G(U(q) \times U(p)))$ acts as linear transformations on 
$\Omega \subset \mathbb{C}^{p+q}$, 
it preserves the boundary $\partial \Omega$ of $\Omega$.
We now study the action of $G$ on $\partial \Omega$.
The $G$-orbits of points in $\mathbb{C}^{p+q}$ 
consist of four types as follows:

(i) If 
$p = (p_1, \ldots, p_{p+q}) \in
(\mathbb{C}^q \setminus \{0\}) \times (\mathbb{C}^p \setminus \{0\})$,
then
\begin{equation} \label{GU1''}
G \cdot p = \{ (z_1, \ldots, z_{p+q}) \in \mathbb{C}^{p+q} \setminus \{0\}
 : -a(|z_1|^2 + \cdots + |z_q|^2) + |z_{1+q}|^2 + \cdots + |z_{p+q}|^2 = 0\},
\end{equation}
where
$a:=(|p_{1+q}|^2 + \cdots + |p_{p+q}|^2)/(|p_1|^2 + \cdots + |z_q|^2) > 0$.

\smallskip

(ii) If $p' = (0,\ldots,0, p'_{1+q}, \ldots, p'_{p+q}) \in \mathbb{C}^{p+q}
\setminus \{0\}$,
then
\begin{equation} \label{GU2''}
G \cdot p' = \{(0,\ldots,0)\} \times (\mathbb{C}^{p} \setminus \{0\}).
\end{equation}

\smallskip

(iii) If $p'' = (p''_1, \ldots, p''_q, 0, \ldots, 0) \in \mathbb{C}^{p+q}
\setminus \{0\}$,
then
\begin{equation} \label{GU3''}
G \cdot p'' = (\mathbb{C}^q \setminus \{0\}) \times \{0\}.
\end{equation}

\smallskip

(iv) If $p''' = (0, \ldots, 0) \in \mathbb{C}^{p+q}$,
then
\begin{equation} \label{GU4''}
G \cdot p''' = \{0\} \subset \mathbb{C}^{p+q}.
\end{equation}

\smallskip

\begin{lemma}\label{5.4}
If 
\begin{equation*} 
\Omega \cap ((\C^q \setminus \{0\}) \times (\C^p \setminus \{0\})) 
= (\C^q \setminus \{0\}) \times (\C^p \setminus \{0\}),
\end{equation*}
then $\Omega$ is equal to $\C^{p+q}$ or $\C^{p+q} \setminus \{0\}$.
\end{lemma}

\begin{proof}
If $\Omega \cap ((\C^q \setminus \{0\}) \times (\C^p \setminus \{0\}))
= ((\C^q \setminus \{0\}) \times (\C^p \setminus \{0\}))$,
then 
$\Omega$ equals one of the following domains 
by the $G$-actions of the type (\ref{GU2''}), (\ref{GU3''}) and (\ref{GU4''}) above:
\begin{equation*} 
\C^{p+q},
\C^{p+q}  \setminus \{0\},
\C^q \times (\C^p \setminus \{0\}),
(\C^p \setminus \{0\}) \times \C^p
\,\,\mathrm{or} \,\,(\C^q \setminus \{0\}) \times (\C^p \setminus \{0\}).
\end{equation*}
Clearly $GU(p,q)$ acts on $\C^{p+q}$ and $\C^{p+q} \setminus \{0\}$
by matrices multiplications.
However, $GU(p,q)$-action on $\C^{p+q}$ does not preserve  
$\C^q \times (\C^p \setminus \{0\})$,
$(\C^p \setminus \{0\}) \times \C^p$
or $(\C^q \setminus \{0\}) \times (\C^p \setminus \{0\})$,
by {\it Remark} \ref{rem1''}.
Thus Lemma~\ref{5.4} is proved.

\end{proof}

Let us consider the case 
$\Omega \cap ((\C^q \setminus \{0\}) \times (\C^p \setminus \{0\}))
\neq (\C^q \setminus \{0\}) \times (\C^p \setminus \{0\})$.
Then we have
$\partial \Omega \cap ((\C^q \setminus \{0\}) \times (\C^p \setminus \{0\}))
\neq \emptyset$.
Thus we can take a point 
\begin{eqnarray*} 
p=(p_1, \ldots, p_{p+q}) \in \partial \Omega \cap 
((\C^q \setminus \{0\}) \times (\C^p \setminus \{0\})).
\end{eqnarray*} 
Let 
\begin{eqnarray*} 
&&a : =(|p_{1+q}|^2 + \cdots + |p_{p+q}|^2)/(|p_1|^2 + \cdots + |p_q|^2) > 0,\\
&&A_a := \{ (z_1, \ldots, z_{p+q}) \in \mathbb{C}^{p+q} : 
-a(|z_1|^2 + \cdots + |z_q|^2) + |z_{1+q}|^2 + \cdots + |z_{p+q}|^2 = 0\}.
\end{eqnarray*}
Note that 
\begin{equation*} 
\partial \Omega  \supset A_a,
\end{equation*}
by the $G$-action of the type (\ref{GU1''}).
Then $\Omega$ is contained in
\begin{equation*} 
D_a = \{ -a(|z_1|^2 + \cdots + |z_q|^2) + |z_{1+q}|^2 + \cdots + |z_{p+q}|^2 > 0\}
\end{equation*}
or 
\begin{equation*} 
C_a = \{ -a(|z_1|^2 + \cdots + |z_q|^2) + |z_{1+q}|^2 + \cdots + |z_{p+q}|^2 < 0\}.
\end{equation*}

Let us first consider the case $\partial \Omega = A_a$,
that is, $\Omega = D_a$ or $C_a$.

\begin{lemma}\label{}
If $\Omega=D_a$,  
then $\Omega$ is biholomorphic to $D^{p,q}$.
\end{lemma}

\begin{proof}
There exists a biholomorphic map from $\Omega$ to $D^{p,q}$ by
\begin{eqnarray*}
\Phi : \C^{p+q} \ni (z_1, \ldots, z_{p+q}) \mapsto (a^{-1/2}z_1,\ldots,a^{-1/2}z_q, z_{1+q}, \ldots, z_{p+q}) \in \C^{p+q}.
\end{eqnarray*}

\end{proof}

\begin{lemma}\label{eqthm1}
If $\Omega=C_a$,  
then $\Omega$ is biholomorphic to $C^{p,q}$.
\end{lemma}

\begin{proof}
Indeed, there exists a biholomorphic map from $\Omega$ to $C^{p,q}$ by
\begin{eqnarray*}
\Phi' : \C^{p+q} \ni (z_1, \ldots, z_{p+q}) \mapsto 
(a^{-1/2}z_1,\ldots,a^{-1/2}z_q, z_{1+q}, \ldots, z_{p+q}) \in \C^{p+q}.
\end{eqnarray*}

\end{proof}

\medskip
Let us consider the case $\partial \Omega \neq A_{a}$.
We will prove that in this case $GU(p,q)$ does not act
effectively on $\Omega$.

\smallskip

{\bf Case (I'')}:
$(\partial \Omega \setminus A_a) \cap ((\C^q \setminus \{0\}) \times (\C^p \setminus \{0\}))
 = \emptyset$.
\\
In this case,
$\partial \Omega$ is the union of $A_a$ and some of the following sets
\begin{equation}\label{}
\{0\} \times \C^p
\,\,\mathrm{or} \,\,
\C^q \times \{0\},
\end{equation}
by the $G$-actions on the boundary of the type 
(\ref{GU2''}) and  (\ref{GU3''}).

\smallskip
{\bf Case (I''-i)}:
$\Omega \subsetneq D_a$.\\
In this case, $\mathbb{C}^q \times \{0\}$ can not be a subset of the boundary of
$\Omega$.
Thus 
\begin{eqnarray*}
&&\partial \Omega = A_a \cup (\{0\} \times \C^p),\\
&&\Omega = D_a \setminus (\{0\} \times \C^p).
\end{eqnarray*}
However, the $GU(p,q)$-action on $\C^{p+q}$ does not preserve  
$D_a \setminus (\{0\} \times \C^p)$,
by  {\it Remark} \ref{rem1''}.
Thus this case does not occur.

\smallskip
{\bf Case (I''-ii)}:
$\Omega \subsetneq C_a$.\\
In this case, $\{0\} \times (\mathbb{C}^p \setminus \{0\})$ 
can not be a subset of the boundary of $\Omega$.
Thus 
\begin{eqnarray*}
&&\partial \Omega = A_a \cup (\mathbb{C}^q \times \{0\}),\\
&&\Omega = C_a \setminus (\mathbb{C}^q \times \{0\}).
\end{eqnarray*}
However, the $GU(p,q)$-action on $\C^{p+q}$ does not preserve  
$C_a \setminus (\mathbb{C}^q \times \{0\})$,
by {\it Remark} \ref{rem1''}
Thus this case does not occur either.

\bigskip

{\bf Case (I\hspace{-.1em}I'')}: 
$(\partial \Omega \setminus A_a) \cap ((\mathbb{C}^q \setminus \{0\}) \times (\mathbb{C}^p \setminus \{0\})) \neq \emptyset$.
\\
In this case, we can take a point 
$p'=(p'_1, \ldots, p'_{p+q}) \in (\partial \Omega \setminus A_a) \cap ((\mathbb{C}^q \setminus \{0\}) \times (\mathbb{C}^p \setminus \{0\}))$.
We put 
\begin{eqnarray*}
&&b := (|p'_{1+q}|^2 + \cdots + |p'_{p+q}|^2)/(|p'_1|^2 + \cdots + |p'_q|^2) > 0,\\
&&B_b := \{(z_1, \ldots, z_{p+q}) \in \mathbb{C}^{p+q} : -b(|z_1|^2 + \cdots + |z_q|^2 ) + |z_{1+q}|^2 + \cdots + |z_{p+q}|^2 = 0\}.
\end{eqnarray*}
We may assume $a>b$ without loss of generality.

\smallskip

{\bf Case (I\hspace{-.1em}I''-i)}:
$\partial \Omega = A_{a} \cup B_{b}$. \\
Since $\Omega$ is connected, it coincides with
\begin{eqnarray*} 
C_a \cap D_b 
&=& \{ b(|z_1|^2 + \cdots + |z_q|^2 ) < |z_{1+q}|^2 + \cdots + |z_{p+q}|^2 < a(|z_1|^2 + \cdots + |z_q|^2 ) \}\\
&\simeq& \{ \frac{b}{a}(|z_1|^2 + \cdots + |z_q|^2 ) < |z_{1+q}|^2 + \cdots + |z_{p+q}|^2 < |z_1|^2 + \cdots + |z_q|^2 \}.
\end{eqnarray*}
Since $GU(p,q)$ is connected, the $GU(p,q)$-action through $\rho$ preserves
subsets of boundary $\partial \Omega$,
$\{|z_{1+q}|^2 + \cdots + |z_{p+q}|^2 = |z_1|^2 + \cdots + |z_q|^2 \}\setminus\{0\}$
and $\{ \frac{b}{a}(|z_1|^2 + \cdots + |z_q|^2 ) = |z_{1+q}|^2 + \cdots + |z_{p+q}|^2\}\setminus\{0\}$,
which are disjoint.
Then the $GU(p,q)$-action also preserves the domain
$\{|z_{1+q}|^2 + \cdots + |z_{p+q}|^2 > |z_1|^2 + \cdots + |z_q|^2\}\simeq D^{p,q}$.
However, by Lemma~\ref{condGU},
$\rho(GU(p,q))\subset GU(p,q)$, and
therefore,
noticing {\it Remark} \ref{rem1''},
we see that the $GU(p,q)$-action does not preserve
$\{\frac{b}{a}(|z_1|^2 + \cdots + |z_q|^2 ) = |z_{1+q}|^2 + \cdots + |z_{p+q}|^2\}$,
a contradiction.
Thus this case does not occur.

\smallskip

{\bf Case (I\hspace{-.1em}I''-ii)}: 
$\partial \Omega \supsetneq  A_a \cup B_b$.
\\
By the same argument of {\bf Case (I\hspace{-.1em}I-ii)}
in the proof of Theorem \ref{M1},
we see that this case does not occur.

\smallskip
We have shown that
$\Omega$ is biholomorphic to one of the following domains:
\begin{eqnarray*} 
\C^{p+q}, \C^{p+q} \setminus \{0\}, D^{p,q}\,\, \mathrm{or}\,\, C^{p,q}.
\end{eqnarray*}

\end{proof}

\section{The automorphism groups of the domains}
\label{sec:6}

\subsection{The automorphism groups of $\C \times \B^n$ and $\C^* \times \B^n$}
\label{sec:6.1}

\begin{theorem}[\cite{BKS0}]\label{autCB}
For $f=(f_{0}, f_{1}, \ldots, f_{n}) \in \mathrm{Aut}(\C \times \B^n)$,
we have
\begin{align*}
f_{0}(z_{0}, z_{1}, \ldots, z_{n})=
a(z_{1}, \ldots, z_{n}) z_{0} + b(z_{1}, \ldots, z_{n}),
\end{align*}
and
\begin{align*}
f_{i}(z_{0}, z_{1}, \ldots, z_{n}) =
\frac{a_{i 0}+\sum_{j=1}^{n} a_{i j} z_{j}}{a_{00}+\sum_{j=1}^{n} a_{0 j} z_{j}}
\end{align*}
for $i=1, \ldots, n$,
where $a$ is a nowhere vanishing holomorphic function on 
$\mathbb{B}^{n}$, 
$b$ is a holomorphic function on 
$\mathbb{B}^{n}$,
and the matrix $(a_{ij})_{0\leq i, j \leq n}$ is an element of $SU(n,1)$.  
\end{theorem}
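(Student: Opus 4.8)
The plan is to exploit the asymmetry between the bounded factor $\B^n$ and the entire factor $\C$: bounded holomorphic functions must be constant along the $\C$-directions, which will let me recover the product structure and then treat each factor separately.

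First I would show that the components $f_1,\ldots,f_n$ are independent of $z_0$. For fixed $z'=(z_1,\ldots,z_n)\in\B^n$, each map $z_0\mapsto f_i(z_0,z')$ is an entire function on $\C$ with values in $\B^n$, hence bounded, and therefore constant by Liouville's theorem. Consequently $(f_1,\ldots,f_n)$ factors through the projection $\pi:\C\times\B^n\to\B^n$ and descends to a holomorphic self-map $\bar f:\B^n\to\B^n$ satisfying $\pi\circ f=\bar f\circ\pi$.

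Next I would identify $\bar f$ as an automorphism of the ball. Applying the same Liouville argument to $f^{-1}=(g_0,g_1,\ldots,g_n)$ produces an induced map $\bar g:\B^n\to\B^n$, and from $f\circ f^{-1}=f^{-1}\circ f=\mathrm{id}$ together with $\pi\circ f=\bar f\circ\pi$ one reads off $\bar f\circ\bar g=\bar g\circ\bar f=\mathrm{id}_{\B^n}$, so $\bar f\in\aut(\B^n)$. The classical description of the ball group then gives the fractional-linear form displayed for $f_1,\ldots,f_n$; since the natural map $SU(n,1)\to\aut(\B^n)$ is surjective, one may choose a representative matrix $(a_{ij})_{0\le i,j\le n}\in SU(n,1)$.

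Finally I would determine $f_0$. Because $(f_1,\ldots,f_n)$ depends only on $z'$, the automorphism $f$ carries each fiber $\C\times\{z'\}$ into the fiber $\C\times\{\bar f(z')\}$, and since $f^{-1}$ respects the fibration over $\bar f(z')$ in the same way, the restriction $z_0\mapsto f_0(z_0,z')$ is a biholomorphism of $\C$ onto $\C$. Every such map is affine, so $f_0(z_0,z')=a(z')z_0+b(z')$ with $a(z')\ne0$; setting $a(z')=\partial f_0/\partial z_0$ (which is independent of $z_0$ and holomorphic in $z'$) and $b(z')=f_0(0,z')$ exhibits $a$ as a nowhere vanishing holomorphic function and $b$ as a holomorphic function on $\B^n$, completing the description. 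The main obstacle is the middle step: rigorously promoting $\bar f$ to an honest automorphism and confirming that $f$ cannot mix the two factors, i.e.\ that the unbounded $\C$-directions are canonically distinguished so that the fibration is genuinely preserved; once this is secured, the affine structure of $\aut(\C)$ and the ball automorphisms dictate the rest.
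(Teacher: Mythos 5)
Your proposal is correct and follows essentially the same route as the paper: Liouville's theorem to show $f_1,\ldots,f_n$ are independent of $z_0$, the same argument for $f^{-1}$ to conclude $\bar f\in\aut(\B^n)$ and invoke the classical fractional-linear description, and then injectivity of the entire map $z_0\mapsto f_0(z_0,z')$ to force it to be affine. The only cosmetic difference is that you argue via surjectivity onto the fiber as well, whereas the paper only needs injectivity (an injective entire self-map of $\C$ is already affine).
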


\begin{proof}
First we consider $f_{i}$ for $i=1, \ldots, n$.
Fix $(z_{1}, \ldots z_{n}) \in \mathbb{B}^{n}$. 
Then, by the assumption,
$f_{i}(\cdot, z_{1}, \ldots, z_{n})$, for $i=1, \ldots, n$, 
are bounded holomorphic functions on $\mathbb{C}$,
and therefore
constant functions by the Liouville theorem.
Hence 
$f_i$, for $i=1, \ldots, n$, 
do not depend on  $z_{0}$.
In the same manner, we 
see that,
for the inverse of $f$
\begin{equation*}
g = (g_0, g_1,\ldots, g_n) = f^{{-1}} 
\in \mathrm{Aut}(\mathbb{C} \times \mathbb{B}^{n}),
\end{equation*}
$g_i$ for $i=1, \ldots n$ are independent of $z_{0}$.
It follows that 
\begin{equation*}
\overline{f} := (f_{1}, \ldots, f_{n}) \in \mathrm{Aut}(\mathbb{B}^{n}).
\end{equation*}
It is well-known (see \cite{Pyateskii}) that
$\overline{f} \in \mathrm{Aut}(\mathbb{B}^{n})$ 
is a linear fractional transformation, whose components are
of the form 
\begin{equation*}
f_{i}(z_{1}, \ldots, z_{n}) = \frac{a_{i0} + 
\sum_{j=1}^{n} a_{ij}z_{j}}{a_{00} + \sum_{j=1}^{n} a_{0j}z_{j}},
\end{equation*}
for $i=1,2, \ldots, n$,
where the matrix $(a_{i j})_{0\leq i, j \leq n}$ is an element of $SU(n,1)$.

Next we consider $f_0$ of $f$.
For fixed $(z_{1}, z_{2}, \ldots, z_{n}) \in \mathbb{B}^{n}$,
$f_{0}(\cdot, z_{1}, \ldots, z_{n})$ is injective 
on $\mathbb{C}$ with respect to $z_{0}$,
since $f_i$, for $i=1, \ldots, n$, 
do not depend on $z_{0}$.
Therefore $f_0$ is an affine transformation with respect to $z_{0}$,
namely,
$f_{0}(z_1, z_{1}, \ldots, z_{n}) = a(z_1,\ldots,z_n)z_0 + b(z_1,\ldots,z_n)$,
where
$a$ is a nowhere vanishing holomorphic function on $\mathbb{B}^{n}$ and
$b$ is a holomorphic function on $\mathbb{B}^{n}$.  
This completes the proof. 

\end{proof}

For the domain $D^{1,n} \simeq C^{n,1} \simeq \C^* \times \B^n$,
we can show the following theorem in a
similar way of the proof of Theorem~\ref{autCB}
(See \cite{MN}).

\begin{theorem}[\cite{MN}]\label{autC*B}
For $f=(f_{0}, f_{1}, \ldots, f_{n}) \in \mathrm{Aut}(C^{n,1})$,
\begin{eqnarray*}
f_{0}(z_{0}, z_{1}, \ldots, z_{n}) =
c\left(\frac{z_{1}}{z_{0}}, \ldots, \frac{z_{n}}{z_{0}}\right) z_{0} 
 \ \mathrm{or} \ 
c\left(\frac{z_{1}}{z_{0}}, \ldots, \frac{z_{n}}{z_{0}}\right) z_{0}^{-1},
\end{eqnarray*}
and
\begin{eqnarray*}
f_{i}(z_{0}, z_{1}, \ldots, z_{n}) =f_{0}(z_{0}, z_{1}, \ldots, z_{n}) 
\frac{a_{i 0}+\sum_{j=1}^{n} a_{i j} z_{j}}{a_{00}+\sum_{j=1}^{n} a_{0 j} z_{j}},
\end{eqnarray*}
for $i=1, \ldots, n$,
where $c$ is a nowhere vanishing holomorphic function on 
$\mathbb{B}^{n}$,
and the matrix $(a_{ij})_{0\leq i,j \leq n}$ is an element of $SU(n,1)$.  
\end{theorem}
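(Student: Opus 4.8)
The plan is to imitate the proof of Theorem~\ref{autCB}, but to carry it out in the product model $C^{n,1}\simeq\C^*\times\B^n$ furnished by the biholomorphism in the Introduction, and then to transport the normal form back to the coordinates of $C^{n,1}$. So I would first regard $f=(f_0,f_1,\ldots,f_n)$ as an automorphism of $\C^*\times\B^n$ with coordinates $(w_0,w_1,\ldots,w_n)$, where $w_0\in\C^*$ and $(w_1,\ldots,w_n)\in\B^n$, and establish the structure of the components there.

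First I would show that $f_1,\ldots,f_n$ are independent of $w_0$. Fixing $(w_1,\ldots,w_n)\in\B^n$, each $f_i(\,\cdot\,,w_1,\ldots,w_n)$, $1\le i\le n$, is a holomorphic function of $w_0\in\C^*$ valued in the bounded set $\B^n$; by the Riemann removable singularity theorem it extends holomorphically across $w_0=0$, and the resulting bounded entire function is constant by the Liouville theorem. Applying the same argument to $f^{-1}$ shows that $\overline f:=(f_1,\ldots,f_n)$ descends to an element of $\aut(\B^n)$. As recalled in the proof of Theorem~\ref{autCB} (see \cite{Pyateskii}), $\overline f$ is then a linear fractional transformation
\begin{equation*}
f_i(w_1,\ldots,w_n)=\frac{a_{i0}+\sum_{j=1}^{n}a_{ij}w_j}{a_{00}+\sum_{j=1}^{n}a_{0j}w_j},\qquad 1\le i\le n,
\end{equation*}
with $(a_{ij})_{0\le i,j\le n}\in SU(n,1)$. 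Next I would analyze $f_0$: since $f_1,\ldots,f_n$ do not involve $w_0$ and $f$ is injective, for each fixed $(w_1,\ldots,w_n)\in\B^n$ the map $w_0\mapsto f_0(w_0,w_1,\ldots,w_n)$ must be a bijection of $\C^*$ onto $\C^*$, i.e. an element of $\aut(\C^*)=\{a\,w_0^{\pm1}:a\in\C^*\}$ (cf. the proof of Lemma~\ref{3.3}). Hence $f_0(w_0,w)=c(w)\,w_0$ or $f_0(w_0,w)=c(w)\,w_0^{-1}$, with $c$ a nowhere vanishing holomorphic function on $\B^n$; because the exponent $\pm1$ varies continuously with $w$ while $\B^n$ is connected, a single alternative holds throughout. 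Transporting these formulas back through $(z_0,\ldots,z_n)\mapsto(z_0,z_1/z_0,\ldots,z_n/z_0)$, so that $w_0=z_0$ and $w_j=z_j/z_0$, then yields the asserted expressions for $f_0$ and $f_i=f_0\cdot\dfrac{a_{i0}+\sum_{j=1}^{n}a_{ij}z_j}{a_{00}+\sum_{j=1}^{n}a_{0j}z_j}$.

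The only genuinely new point compared with Theorem~\ref{autCB} is the behavior of the first factor. Replacing $\C$ by $\C^*$ forces two modifications: in the first step the boundedness argument now requires removing the singularity at $w_0=0$ before Liouville applies, and in the second step $\aut(\C^*)$ has two connected components, so the affine normal form $a z_0+b$ of the $\C\times\B^n$ case is replaced by the dichotomy $c\,w_0^{\pm1}$. The main care is thus needed in verifying that the $\pm1$ ambiguity is pinned down by one global sign and in tracking the factor $z_0^{\pm1}$ cleanly through the coordinate change; the remainder is the formal analogue of the proof of Theorem~\ref{autCB}.
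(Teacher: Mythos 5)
Your proposal is correct and follows essentially the route the paper itself indicates: the paper gives no independent proof of Theorem~\ref{autC*B}, stating only that it is proved ``in a similar way'' to Theorem~\ref{autCB} and deferring to \cite{MN}, and your argument is exactly that adaptation (removable singularity plus Liouville on the $\C^*$-fibers, the classification of $\aut(\B^n)$ for the last $n$ components, and the two-component group $\aut(\C^*)=\{a\,w_0^{\pm1}\}$ with a connectedness argument pinning down the sign). The only cosmetic point is that after the substitution $w_j=z_j/z_0$ the transported fraction is $\bigl(a_{i0}+\sum_{j}a_{ij}z_j/z_0\bigr)\big/\bigl(a_{00}+\sum_{j}a_{0j}z_j/z_0\bigr)$, which is how the displayed formula in the theorem must be read.
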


\subsection{The automorphism groups of $D^{p,q}$ for $p>1$}
\label{sec:6.2}

\begin{theorem}\label{autDpq}
$\aut(D^{p,q}) \simeq GU(p,q)$ for $p>1$, $q>0$.
\end{theorem}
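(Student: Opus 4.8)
The inclusion $GU(p,q)\subseteq\aut(D^{p,q})$ is already established, and by Lemma~\ref{condGU} we have $GU(p,q)=\aut(D^{p,q})\cap GL(p+q,\C)$. Hence the whole problem reduces to showing that \emph{every} $\varphi\in\aut(D^{p,q})$ is the restriction of a linear map. The plan is to produce, out of $\varphi$, a global automorphism of $\C^{p+q}$ that preserves $D^{p,q}$, and then to prove that any such automorphism is linear, which is where the hypothesis $p>1$ enters decisively.

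First I would use $p>1$ to extend $\varphi$ to $\C^{p+q}$. Since $p>1$, every coordinate hyperplane $\{z_k=0\}$ meets $D^{p,q}$ (for a positive coordinate one uses that there is a second positive coordinate available to make the form positive), so the Laurent/Reinhardt reasoning of Lemma~\ref{LU} shows that the expansion of any $f\in\mathcal O(D^{p,q})$ has no negative powers. A short estimate with the moduli then shows that the smallest complete logarithmically convex Reinhardt domain containing $D^{p,q}$ is all of $\C^{p+q}$, so every holomorphic function on $D^{p,q}$ is in fact entire; equivalently, the envelope of holomorphy of $D^{p,q}$ is $\C^{p+q}$. Applying this to the components of $\varphi$ and of $\varphi^{-1}$ and invoking the identity theorem, $\varphi$ extends to $F\in\aut(\C^{p+q})$ with $F(D^{p,q})=D^{p,q}$. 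Writing $Q(z)=z^{*}Jz=-|z_1|^2-\cdots-|z_q|^2+|z_{q+1}|^2+\cdots+|z_{p+q}|^2$, the map $F$ is a diffeomorphism of $\C^{p+q}$ carrying the null cone $\partial D^{p,q}=\{Q=0\}$ onto itself, hence fixing its unique singular point: $F(0)=0$.

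Next I would linearize $F$ at $0$. With the homogeneous Taylor expansion $F=\sum_{k\ge1}F_k$, set $F^{(\lambda)}:=\lambda^{-1}F(\lambda\,\cdot)=\sum_k\lambda^{k-1}F_k$; each $F^{(\lambda)}$ ($\lambda\in\C^{*}$) again preserves $D^{p,q}$ since $D^{p,q}$ is invariant under scalar multiplication, and letting $\lambda\to0$ yields the invertible linear map $F_1=dF_0$ with $F_1(D^{p,q})\subseteq\overline{D^{p,q}}$. As $F_1$ is open and the interior of $\overline{D^{p,q}}$ is $D^{p,q}$, this forces $F_1(D^{p,q})\subseteq D^{p,q}$; the same argument for $F^{-1}$ gives the reverse inclusion, so $F_1\in GU(p,q)$ by Lemma~\ref{condGU}. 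Replacing $F$ by $F_1^{-1}F$, I may assume $F(0)=0$ and $dF_0=\mathrm{id}$, and it remains to prove $F=\mathrm{id}$. Because $F$ preserves $\{Q>0\}$, $\{Q<0\}$ and $\{Q=0\}$, one has $Q\circ F=u\cdot Q$ for a positive factor $u$; comparing the bidegree-$(k,1)$ parts of this identity — on the left the only contribution is $z^{*}JF_k(z)$, precisely because $F_1=\mathrm{id}$ — gives $z^{*}JF_k=u_{k-1,0}\cdot z^{*}Jz$, i.e. $(F_k)_i=u_{k-1,0}(z)\,z_i$ for every $i$. Thus every homogeneous term $F_k$ is \emph{radial}, so $F(z)=\psi(z)\,z$ for an entire $\psi$ with $\psi(0)=1$.

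Finally, a radial entire automorphism must be a dilation: $F$ maps each line $\C v$ into itself, and the restriction $\zeta\mapsto\psi(\zeta v)\,\zeta$ is an injective entire self-map of $\C$ fixing $0$, hence linear; this forces $\psi$ to be constant on each line and therefore, by compactness of $\mathbb{P}^{p+q-1}$, globally constant, so $\psi\equiv1$ and $F=\mathrm{id}$. Consequently the original $F$, hence $\varphi$, lies in $GU(p,q)$. Together with the reverse inclusion, and since the compact-open topology on $\aut(D^{p,q})$ restricts to the usual topology on $GU(p,q)$, this yields $\aut(D^{p,q})\simeq GU(p,q)$. I expect the main obstacle to be the two rigidity inputs that genuinely require $p>1$: the extension step, which fails for $p=1$ (there $z_{p+q}$ cannot vanish, $D^{1,q}\simeq\C^{*}\times\B^q$, and the automorphism group is the larger one of Theorem~\ref{autC*B}), and the bidegree computation forcing every $F_k$ radial, for which one must justify the division $Q\mid Q\circ F$ by noting that the indefinite form $Q$ is irreducible and generates a real ideal.
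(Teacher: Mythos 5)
Your proposal is correct, and its skeleton matches the paper's almost step for step: extend $\varphi$ to an automorphism of $\C^{p+q}$ using $p>1$, deduce $F(0)=0$ from the origin being the unique singular point of the null cone, use the scaling limit $\lambda^{-1}F(\lambda z)\to dF_0$ together with Lemma~\ref{condGU} to reduce to $dF_0=\mathrm{id}$, show every higher homogeneous term is a multiple of $z$, and finish by restricting to complex lines and invoking injectivity. The two places where you genuinely diverge are worth noting. For the extension, the paper applies Hartogs directly to the slices $D^{p,q}\cap\{z_1=w_1,\dots,z_q=w_q\}$, which are complements of closed balls in $\C^p$ with $p\ge 2$; your Reinhardt route (no negative Laurent exponents plus completeness of the domain of convergence) reaches the same conclusion and is equally valid. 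The more substantial difference is the core rigidity step: the paper's Lemma~\ref{1.2} extracts the radial form $(F_k)_i=u_{k-1,0}\,z_i$ by parametrizing explicit torus orbits inside the null cone and matching Fourier coefficients by hand, whereas you obtain it from the factorization $Q\circ F=u\cdot Q$ and a bidegree comparison. Your version is cleaner and more conceptual, but the division step is the one point that needs tightening: $Q\circ F$ is only real-analytic, so rather than dividing the full function by $Q$ you should first use the $\C^*$-invariance of the cone (for $z$ with $Q(z)=0$ and $t\in\C^*$, $\sum_{k,l}t^k\bar t^{\,l}F_l(z)^*JF_k(z)=0$ forces each bihomogeneous piece $F_l^*JF_k$ to vanish on $\{Q=0\}$ separately) and then apply the real Nullstellensatz for the irreducible indefinite form $Q$ to the bihomogeneous \emph{polynomial} $z^*JF_k(z)$, yielding $z^*JF_k=u_{k-1,0}Q$ with $u_{k-1,0}$ a holomorphic polynomial of degree $k-1$; the linear independence of the monomials $\bar z_jz^\alpha$ then gives $F_k=u_{k-1,0}\,z$ as you claim. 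With that repair the argument is complete and buys a shorter, less computational proof than the paper's coefficient-by-coefficient analysis.
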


\begin{proof}
For fixed $(w_1, \ldots, w_q) \in \C^q$,
\begin{eqnarray*}
D^{p,q}\cap \{ z_1=w_1,\ldots, z_q=w_q \} &\simeq&\\
&&
\hspace{-2cm}
\{(z_{1+q}, \ldots, z_{p+q}) \in \mathbb{C}^p:
|z_{1+q}|^{2}+ \cdots + |z_{p+q}|^{2} > |w_{1}|^{2} + \cdots + |w_{q}|^{2}\},
\end{eqnarray*}
and the compliment of the right-hand side in $\C^p$
is a compact set, $\overline{\B^p_r}$,
where $r^2=|w_{1}|^{2} + \cdots + |w_{q}|^{2}$.
Thus any holomorphic function on $D^{p,q}$
extends holomorphically on $\C^{p+q}$ by Hartogs' extension theorem.
Therefore $f \in \aut(D^{p,q})$ extends to a holomorphic map
from $\C^{p+q}$ to itself, and since $f^{-1} \in \aut(D^{p,q})$
also extends, we have $f \in \aut(\C^{p+q})$ by the uniqueness of analytic 
continuation.
The theorem follows from the next lemma.

\end{proof}

\begin{lemma}\label{1.1}
Let $p, q>0$.
If $f \in \aut(\C^{p+q})$ preserves $D^{p,q}$ and the null cone
\begin{eqnarray*}
\partial D^{p,q} =
\{(z_1, \ldots, z_{p+q}) \in \mathbb{C}^{p+q}:
-|z_{1}|^{2} - \cdots -|z_{q}|^{2} + |z_{1+q}|^{2}+ \cdots + |z_{p+q}|^{2}=0\},
\end{eqnarray*}
then we have $f \in GU(p,q)$.
\end{lemma}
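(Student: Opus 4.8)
The plan is to prove that such an $f$ is necessarily a \emph{linear} map; once that is known, membership in $GU(p,q)$ is immediate from Lemma~\ref{condGU} (equivalently, from the identity $L^{*}JL=\mu(0)J$ derived below). Throughout write $Q(z)=z^{*}Jz$ for the defining Hermitian form, with $J=\mathrm{diag}[-E_q,E_p]$, so that $\partial D^{p,q}=\{Q=0\}$ and $D^{p,q}=\{Q>0\}$. Since $f$ is a bijection of $\C^{p+q}$ preserving both $D^{p,q}$ and $\{Q=0\}$, it carries each of these sets onto itself bijectively.

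First I would locate the origin. The real-analytic hypersurface $\{Q=0\}$ is smooth away from $0$ and singular exactly at $0$, since $Q$ is a nondegenerate Hermitian form and its real gradient vanishes only there. A biholomorphism of $\C^{p+q}$ mapping $\{Q=0\}$ onto itself must preserve the singular locus, hence $f(0)=0$.

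Next comes the crux, the functional equation. Because $f$ maps the cone $\{Q=0\}$ biholomorphically onto itself and fixes $0$, the germ at $0$ of $Q\circ f$ vanishes exactly on the germ of $\{Q=0\}$. As $Q$ is an irreducible real quadratic form (rank $2(p+q)\ge 4$) changing sign across its zero set, its vanishing ideal is generated by $Q$, and the real Nullstellensatz produces a real-analytic germ $\mu$ at $0$ with
\[ Q(f(z))=\mu(z)\,Q(z). \]
Comparing the lowest-order parts, which have bidegree $(1,1)$ in $(z,\bar z)$, and writing $L:=Df(0)$, gives $z^{*}(L^{*}JL)z=\mu(0)\,z^{*}Jz$, hence $L^{*}JL=\mu(0)J$. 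Invertibility of $L$ forces $\mu(0)\neq 0$, and since $f$ maps $D^{p,q}$ into itself, $\mu>0$ on $D^{p,q}$; letting $z\to 0$ inside $D^{p,q}$ yields $\mu(0)>0$, so $L\in GU(p,q)$. I expect the justification of this functional equation \emph{at the singular point} $0$ to be the main obstacle.

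Finally, replace $f$ by $g:=L^{-1}\circ f$, still an automorphism of $\C^{p+q}$ preserving $D^{p,q}$ and the cone, now with $g(0)=0$ and $Dg(0)=\mathrm{id}$; it satisfies $Q\circ g=\nu\,Q$ with $\nu(0)=1$. Writing $g(z)=z+\sum_{k\ge 2}P_{k}(z)$ with $P_k$ homogeneous of degree $k$, I would match, for each $k\ge 2$, the terms of bidegree $(k,1)$ in $Q(g(z))=\nu(z)Q(z)$: the single antiholomorphic factor $z^{*}$ isolates
\[ z^{*}JP_{k}(z)=\nu_{k-1,0}(z)\,z^{*}Jz, \]
where $\nu_{k-1,0}$ is the holomorphic, degree $(k-1)$ part of $\nu$; comparing the coefficient of each $\bar z_{i}$ gives $P_{k}(z)=\nu_{k-1,0}(z)\,z$. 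Thus $g(z)=\psi(z)\,z$ for an entire $\psi$ with $\psi(0)=1$, so $g$ preserves every complex line through $0$. Its restriction to a line $\C v$ is the injective entire self-map $\lambda\mapsto\lambda\,\psi(\lambda v)$ of $\C$, which must be affine and, fixing $0$, linear; hence $\psi(\lambda v)\equiv\psi(0)=1$. Therefore $\psi\equiv 1$, so $g=\mathrm{id}$ and $f=L\in GU(p,q)$.
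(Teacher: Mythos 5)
Your proposal is correct in substance and shares the paper's overall skeleton --- locate $f(0)=0$ at the unique singular point of the cone, show the linear part lies in $GU(p,q)$, normalize to ${\rm Jac}_{\C}f(0)=E_{p+q}$, prove $f(z)=\psi(z)z$, and finish by restricting to complex lines and using that an injective entire self-map of $\C$ is affine --- but the technical engine in the middle is genuinely different. The paper gets the linear part into $GU(p,q)$ by the elementary scaling trick $f(tz)/t\to {\rm Jac}_{\C}f(0)z$ as $t\to 0$ (so the Jacobian preserves $\partial D^{p,q}$, and Lemma~\ref{condGU} applies), and it establishes the structure $f_j(z)=z_j\psi(z)$ by Lemma~\ref{1.2}: a hands-on Taylor-coefficient computation obtained by restricting the identity $Q(f(z))=0$ to explicit parametrized circles and rays inside the cone and matching Fourier coefficients in the phases $\theta_j$. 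You instead derive the single functional equation $Q\circ f=\mu\,Q$ from a real-analytic Nullstellensatz at the singular point and then read off both the relation $L^{*}JL=\mu(0)J$ and the identities $P_k(z)=\nu_{k-1,0}(z)\,z$ by bidegree decomposition; this is cleaner and makes the mechanism more transparent, but it rests entirely on the divisibility claim you yourself flag as the main obstacle. That claim is in fact true --- the germ of $Q$ at $0$ is an irreducible, sign-changing real-analytic germ, so $(Q)$ is a real ideal and Risler's analytic Nullstellensatz gives $\mathcal{I}(V(Q))_0=(Q)$ --- but it is a nontrivial imported tool, and if you want to avoid it you can substitute the paper's scaling argument for the linear part and its direct coefficient bookkeeping (restriction to curves in the cone) for the bidegree step. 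One small point to make explicit in your final step: $g$ maps each line $\C v$ \emph{into} itself because $g(\lambda v)=\psi(\lambda v)\lambda v$, and injectivity of the restriction then follows from injectivity of $g$ on $\C^{p+q}$; with $\psi(0)=1$ this forces the affine map $\lambda\mapsto\lambda\psi(\lambda v)$ to be the identity, exactly as in the paper's concluding argument.
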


\begin{proof}
Since the origin $0$ is the unique singular point in $\partial D^{p,q}$,
we have $f(0)=0$ by the hypothesis of Lemma~\ref{1.1}.
Put $f = (f_1, \ldots, f_{p+q})$
and the Taylor series of $f_j$ by
\begin{eqnarray*}
f_j(z) 
&=& \sum_{\nu \in \mathbb{Z}_{\geq 0}^{p+q}} a^{(j)}_{\nu}z^{\nu}.
\end{eqnarray*}
For $t \in \C^*$, 
$f(tz)/t$ preserves $\partial D^{p,q}$:
\[
-|f_{1}(tz)/t|^{2} - \cdots -|f_{q}(tz)/t|^{2} + |f_{1+q}(tz)/t|^{2}+ \cdots + |f_{p+q}(tz)/t|^{2} = 0.
\]
If $t$ tends to $0$,
then we have
\[
-\left|\sum_{|\nu|=1} a^{(1)}_{\nu}z^{\nu}\right|^{2} - \cdots -
\left|\sum_{|\nu|=1} a^{(q)}_{\nu}z^{\nu}\right|^{2} +
\left|\sum_{|\nu|=1} a^{(1+q)}_{\nu}z^{\nu}\right|^{2}+ \cdots +
\left|\sum_{|\nu|=1} a^{(p+q)}_{\nu}z^{\nu}\right|^{2} = 0.
\]
This equation means
\begin{eqnarray*}
{\rm Jac}_{\C}f(0) = 
\begin{pmatrix}
a^{(1)}_{e_1} & \cdots & a^{(1)}_{e_{p+q}} \\
\vdots  & \ddots &  \vdots \\
a^{(p+q)}_{e_1} & \cdots & a^{(p+q)}_{e_{p+q}}
\end{pmatrix}
\in GL(p+q,\C),
\end{eqnarray*}
preserves $\partial D^{p,q}$,
where $e_1=(1,0,\ldots,0)$, $e_2=(0,1,0\ldots,0)$, $\ldots,e_{p+q}=(0\ldots,0,1)$ are
the natural basis of $\Z^{p+q}$.
Since ${\rm Jac}_{\C}f(0)$ is non degenerate matrix,
${\rm Jac}_{\C}f(0)$ also preserves $D^{p,q}$.
It follows from Lemma~\ref{condGU} that ${\rm Jac}_{\C}f(0)\in GU(p,q)$.
Considering $f \circ ({\rm Jac}_{\C}f(0))^{-1}$, we may assume that ${\rm Jac}_{\C}f(0) = E_{p+q}$,
and will prove that $f = E_{p+q}$.
To prove Lemma~\ref{1.1},
we show the following:

\begin{lemma}\label{1.2}
Let $f = (f_1, \ldots, f_{p+q}):\C^{p+q} \rightarrow \C^{p+q}$
be a holomorphic map,
and put the Taylor series of $f_j$ as $:$
\begin{eqnarray*}
f_j(z) 
&=& \sum_{\nu \in \mathbb{Z}_{\geq 0}^{p+q}} a^{(j)}_{\nu}z^{\nu}.
\end{eqnarray*}
If the holomorphic map
$f$ preserves the null cone
$\partial D^{p,q}$ and the origin $0$, respectively,
and if ${\rm Jac}_{\C}f(0) = E_{p+q}$,
then we have
$a^{(j)}_{\nu} = 0$ if $\nu-e_j \notin \mathbb{Z}_{\geq 0}^{p+q}$,
and
$a^{(j)}_{\nu+e_j} = a^{(1)}_{\nu+e_1}$.
\end{lemma}

\begin{proof}
By the assumption we have, on the null cone $\partial D^{p,q}$, 
\begin{eqnarray}\label{6.1}
0 &=&
-|f_{1}(z)|^{2} - \cdots -|f_{q}(z)|^{2} + |f_{q+1}(z)|^{2}+ \cdots + |f_{p+q}(z)|^{2} \nonumber \\
&=&
- \sum_{j=1}^{q} z_j \overline{\left(\sum_{|\nu|>1}a^{(j)}_{\nu}z^{\nu} \right)}
- \sum_{j=1}^{q}
\left(\sum_{|\nu|>1}a^{(j)}_{\nu}z^{\nu} \right)\overline{z_j}
- \sum_{j=1}^{q} \left|\sum_{|\nu|>1}a^{(j)}_{\nu}z^{\nu}\right|^2 \\
&&
+
\sum_{j=1+q}^{p+q} z_j \overline{\left(\sum_{|\nu|>1}a^{(j)}_{\nu}z^{\nu} \right)} 
+ 
\sum_{j=1+q}^{p+q} \left(\sum_{|\nu|>1}a^{(j)}_{\nu}z^{\nu} \right)\overline{z_j}
+ \sum_{j=1+q}^{p+q}\left|\sum_{|\nu|>1}a^{(j)}_{\nu}z^{\nu}\right|^2.\nonumber 
\end{eqnarray}
First we prove $a^{(j)}_{\nu} = 0$ for $\nu-e_j \notin \mathbb{Z}_{\geq 0}^{p+q}$.
We restrict the equation (\ref{6.1}) on
\begin{eqnarray*}
\left\{
\left(\sqrt[]{p}e^{\sqrt[]{-1}\theta_1}z_1,\ldots,\sqrt[]{p}e^{\sqrt[]{-1}\theta_{q}}z_{1},
\sqrt[]{q}e^{\sqrt[]{-1}\theta_{1+q}}z_{1},\ldots,
\sqrt[]{q}e^{\sqrt[]{-1}\theta_{p+q}}z_{1}\right): z_1 \in \C, (\theta_1,\ldots,\theta_{p+q}) \in \R^{p+q}
\right\}.
\end{eqnarray*}
Then, considering the right-hand side above as a function of $z_1$ and
$\bar z_1$, each coefficient of $z_1^k \bar z_1^l$
vanishes.
In particular, for the $z_1 \bar z_1^k$ term, we have
\begin{eqnarray*}
0 =
- \sum_{j=1}^{q} e^{\sqrt[]{-1}\theta_j} \sqrt[]{p} z_1 \overline{\left(\sum_{|\nu|=k}a^{(j)}_{\nu}z^{\nu} \right)}
+
\sum_{j=1+q}^{p+q} e^{\sqrt[]{-1}\theta_j} \sqrt[]{q} z_1 \overline{\left(\sum_{|\nu|=k}a^{(j)}_{\nu}z^{\nu} \right)},
\end{eqnarray*}
where $z^{\nu}$ should be read
$(e^{\sqrt[]{-1}\theta_1} \sqrt[]{p}z_1)^{\nu_1} \cdots (e^{\sqrt[]{-1}\theta_q} \sqrt[]{p}z_1)^{\nu_q}
(e^{\sqrt[]{-1}\theta_{1+q}} \sqrt[]{q}z_1)^{\nu_{1+q}} \cdots
(e^{\sqrt[]{-1}\theta_{p+q}} \sqrt[]{q} z_1)^{\nu_{p+q}}$.
Then, for $\nu \notin e_j + \mathbb{Z}_{\geq 0}^{p+q}$,
the coefficient of $e^{\sqrt[]{-1}\theta_j} e^{-\sqrt[]{-1}\langle \nu, \theta \rangle} z_1 \bar z_1^{|\nu|}$
must vanishes, that is, $a^{(j)}_{\nu}=0$ for $\nu \notin e_j + \mathbb{Z}_{\geq 0}^{p+q}$.

Next we prove $a^{(j)}_{\nu+e_j} = a^{(1)}_{\nu+e_1}$ for $j=2,\ldots,p+q$.
Put $z_1=e^{\sqrt[]{-1}\theta}r$ for $\theta \in \R$ and
$r = \sqrt[]{ -|z_2|^{2} - \cdots -|z_{q}|^{2} + |z_{1+q}|^{2}+ \cdots + |z_{p+q}|^{2}}>0$,
and substitute it into (\ref{6.1}).
Then, considering the right-hand side of (\ref{6.1}) as a function of $e^{\sqrt[]{-1}\theta}$,
each coefficient of $e^{k\sqrt[]{-1}\theta}$, $k \in \Z$, vanishes.
For $k=0$, we have
\begin{eqnarray*}
0
&=&
- r \overline{\left(\sum_{|\nu|>1}a^{(1)}_{\nu}z^{\nu'}r \right)}
- \sum_{j=2}^{q} z_j \overline{\left(\sum_{|\nu'|>1}a^{(j)}_{\nu'}z^{\nu'} \right)}
- \left(\sum_{|\nu|>1}a^{(1)}_{\nu}z^{\nu'}r \right)r
\\
&&
- \sum_{j=2}^{q} \left(\sum_{|\nu'|>1}a^{(j)}_{\nu'}z^{\nu'} \right)\overline{z_j}
- \sum_{j=1}^{q}\sum_{l,m>1}\left(\sum_{|\mu|=l}a^{(j)}_{\mu}z^{\mu'}r^{\nu_1} \right)
\overline{\left(\sum_{|\nu|=m}a^{(j)}_{\nu}z^{\nu'}r^{\nu_1} \right)}\\
&&+
\sum_{j=1+q}^{p+q} z_j \overline{\left(\sum_{|\nu'|>1}a^{(j)}_{\nu'}z^{\nu'} \right)} + 
\sum_{j=1+q}^{p+q} \left(\sum_{|\nu'|>1}a^{(j)}_{\nu'}z^{\nu'} \right)\overline{z_j}
\\
&&+ \sum_{j=1+q}^{p+q}\sum_{l,m>1} \left(\sum_{|\mu|=l}a^{(j)}_{\mu}z^{\mu'}r^{\nu_1}\right)
\overline{\left(\sum_{|\nu|=m}a^{(j)}_{\nu}z^{\nu'}r^{\nu_1}\right)},
\end{eqnarray*}
where $\nu'=(0,\nu_2,\ldots,\nu_{p+q})$, $\mu'=(0,\mu_2,\ldots,\mu_{p+q})$
and $\nu=\nu'+\nu_1e_1$, $\mu=\mu'+\nu_1e_1$.
Then the right hand side above is a power series of $z_2,\ldots,z_{p+q},\bar z_2,\ldots,\bar z_{p+q}$,
and therefore each coefficient of $z^{\mu'}\bar z^{\nu'}$ vanishes.
In particular, the coefficient of $z^{\nu'} |z_j|^2$ for $j=2,\ldots,p+q$ is $0= \pm (a^{(1)}_{\nu'+e_1} - a^{(j)}_{\nu'+e_j})$.
Thus we have proven that $a^{(j)}_{\nu+e_j} = a^{(1)}_{\nu+e_1}$ when $\nu_1=0$.

Similarly,  the coefficient of $e^{k\sqrt[]{-1}\theta}$ for $k=1$ is
\begin{eqnarray*}
0
&=&
- r \overline{\left(\sum_{|\nu'|>1}a^{(1)}_{\nu'}z^{\nu'} \right)}
- \left(\sum_{|\nu|>1}a^{(1)}_{\nu}z^{\nu'}r^2 \right)r
- \sum_{j=2}^{q} \left(\sum_{|\nu|>1}a^{(j)}_{\nu}z^{\nu'}r \right)\overline{z_j}
\\
&&
- \sum_{j=1}^{q}\sum_{l,m>1}\left(\sum_{|\mu|=l}a^{(j)}_{\mu}z^{\mu'}r^{\nu_1+1} \right)
\overline{\left(\sum_{|\nu|=m}a^{(j)}_{\nu}z^{\nu'}r^{\nu_1} \right)}
+ \sum_{j=1+q}^{p+q} \left(\sum_{|\nu|>1}a^{(j)}_{\nu}z^{\nu'}r \right)\overline{z_j}
\\
&&
+ \sum_{j=1+q}^{p+q}\sum_{l,m>1} \left(\sum_{|\mu|=l}a^{(j)}_{\mu}z^{\mu'}r^{\nu_1+1}\right)
\overline{\left(\sum_{|\nu|=m}a^{(j)}_{\nu}z^{\nu'}r^{\nu_1}\right)},
\end{eqnarray*}
where $\nu'$, $\mu'$
and $\nu$ are as before,
and
$\mu=\mu'+(\nu_1+1)e_1$.
Dividing this equation by $r$,
the right-hand side becomes a power series of $z_2,\ldots,z_{p+q},\bar z_2,\ldots,\bar z_{p+q}$,
and therefore the coefficient of $z^{\nu'} |z_j|^2$ for $j=2,\ldots,p+q$ vanishes,
that is, $0= \pm (a^{(1)}_{\nu'+2e_1} - a^{(j)}_{\nu'+e_1+e_j})$.
Thus we have proven that $a^{(j)}_{\nu+e_j} = a^{(1)}_{\nu+e_1}$ when $\nu_1=1$.

For $k>1$, the coefficient of $e^{k\sqrt[]{-1}\theta}$ is
\begin{eqnarray*}
0
&=&
- \left(\sum_{|\nu|>1}a^{(1)}_{\nu}z^{\nu'}r^{k+1} \right)r
- \sum_{j=2}^{q} \left(\sum_{|\nu|>1}a^{(j)}_{\nu}z^{\nu'}r^k \right)\overline{z_j}
\\
&&
- \sum_{j=1}^{q}\sum_{l,m>1}\left(\sum_{|\mu|=l}a^{(j)}_{\mu}z^{\mu'}r^{\nu_1+k} \right)
\overline{\left(\sum_{|\nu|=m}a^{(j)}_{\nu}z^{\nu'}r^{\nu_1} \right)}
+ \sum_{j=1+q}^{p+q} \left(\sum_{|\nu|>1}a^{(j)}_{\nu}z^{\nu'}r^k \right)\overline{z_j}
\\
&&
+ \sum_{j=1+q}^{p+q}\sum_{l,m>1} \left(\sum_{|\mu|=l}a^{(j)}_{\mu}z^{\mu'}r^{\nu_1+k}\right)
\overline{\left(\sum_{|\nu|=m}a^{(j)}_{\nu}z^{\nu'}r^{\nu_1}\right)},
\end{eqnarray*}
where $\nu'$, $\mu'$
and $\nu$ are as before,
and
$\mu=\mu'+(\nu_1+k)e_1$.
Dividing this equation by $r^k$,
the right-hand side becomes a power series of $z_2,\ldots,z_{p+q},\bar z_2,\ldots,\bar z_{p+q}$,
and therefore the coefficient of $z^{\nu'} |z_j|^2$ for $j=2,\ldots,p+q$ vanishes,
that is, $0= \pm (a^{(1)}_{\nu'+(k+1)e_1} - a^{(j)}_{\nu'+ke_1+e_j})$.
Thus we have proven $a^{(j)}_{\nu+e_j} = a^{(1)}_{\nu+e_1}$ when $\nu_1=k>1$,
and this completes the proof.
\end{proof}

We continue the proof of Lemma~\ref{1.1}.
We consider $f\in \aut(D^{p,q})$, ${\rm Jac}_{\C}f(0) = E_{p+q}$, on
\[
\left\{
\left(\sqrt[]{p}e^{\sqrt[]{-1}\theta_1}z_1,\ldots,\sqrt[]{p}e^{\sqrt[]{-1}\theta_{q}}z_{1},
\sqrt[]{q}e^{\sqrt[]{-1}\theta_{1+q}}z_{1},\ldots,
\sqrt[]{q}e^{\sqrt[]{-1}\theta_{p+q}}z_{1}\right):
z_1 \in \C, (\theta_1,\ldots,\theta_{p+q}) \in \R^{p+q}
\right\} \subset \partial D^{p,q}.
\]
By Lemma~\ref{1.2}, for each $1 \leq j \leq q$,
we have
\begin{eqnarray}\label{3}
&&f_j\left(\sqrt[]{p}e^{\sqrt[]{-1}\theta_1}z_1,\ldots,\sqrt[]{p}e^{\sqrt[]{-1}\theta_{q}}z_{1},
\sqrt[]{q}e^{\sqrt[]{-1}\theta_{1+q}}z_{1},\ldots,
\sqrt[]{q}e^{\sqrt[]{-1}\theta_{p+q}}z_{1}\right)\\
&& =
e^{\sqrt[]{-1}(\theta_j-\theta_1)}
f_1\left(\sqrt[]{p}e^{\sqrt[]{-1}\theta_1}z_1,\ldots,\sqrt[]{p}e^{\sqrt[]{-1}\theta_{q}}z_{1},
\sqrt[]{q}e^{\sqrt[]{-1}\theta_{1+q}}z_{1},\ldots,
\sqrt[]{q}e^{\sqrt[]{-1}\theta_{p+q}}z_{1}\right),\nonumber
\end{eqnarray}
and for each $1+q \leq j \leq p+q$,
we have
\begin{eqnarray}\label{3'}
&&f_j\left(\sqrt[]{p}e^{\sqrt[]{-1}\theta_1}z_1,\ldots,\sqrt[]{p}e^{\sqrt[]{-1}\theta_{q}}z_{1},
\sqrt[]{q}e^{\sqrt[]{-1}\theta_{1+q}}z_{1},\ldots,
\sqrt[]{q}e^{\sqrt[]{-1}\theta_{p+q}}z_{1}\right)\\
&& =
e^{\sqrt[]{-1}(\theta_j-\theta_1)}\frac{\sqrt[]{q}}{\sqrt[]{p}}
f_1\left(\sqrt[]{p}e^{\sqrt[]{-1}\theta_1}z_1,\ldots,\sqrt[]{p}e^{\sqrt[]{-1}\theta_{q}}z_{1},
\sqrt[]{q}e^{\sqrt[]{-1}\theta_{1+q}}z_{1},\ldots,
\sqrt[]{q}e^{\sqrt[]{-1}\theta_{p+q}}z_{1}\right).\nonumber
\end{eqnarray}
Since the holomorphic map
\begin{eqnarray*}
&&
\hspace{-6mm}
\C \ni z_1 \rightarrow
\biggl(f_1\left(\sqrt[]{p}e^{\sqrt[]{-1}\theta_1}z_1,\ldots,\sqrt[]{p}e^{\sqrt[]{-1}\theta_{q}}z_{1},
\sqrt[]{q}e^{\sqrt[]{-1}\theta_{1+q}}z_{1},\ldots,
\sqrt[]{q}e^{\sqrt[]{-1}\theta_{p+q}}z_{1}\right),
\ldots,\\
&&
\hspace{18mm}
f_{p+q}\left(\sqrt[]{p}e^{\sqrt[]{-1}\theta_1}z_1,\ldots,\sqrt[]{p}e^{\sqrt[]{-1}\theta_{q}}z_{1},
\sqrt[]{q}e^{\sqrt[]{-1}\theta_{1+q}}z_{1},\ldots,
\sqrt[]{q}e^{\sqrt[]{-1}\theta_{p+q}}z_{1}\right) \biggr) \in \C^{p+q}
\end{eqnarray*}
is injective on $\C$, the holomorphic map
\[
\C \ni z_1 \rightarrow
f_1\left(\sqrt[]{p}e^{\sqrt[]{-1}\theta_1}z_1,\ldots,\sqrt[]{p}e^{\sqrt[]{-1}\theta_{q}}z_{1},
\sqrt[]{q}e^{\sqrt[]{-1}\theta_{1+q}}z_{1},\ldots,
\sqrt[]{q}e^{\sqrt[]{-1}\theta_{p+q}}z_{1}\right)
\in \C
\]
must be injective by (\ref{3}) and (\ref{3'}).
Any holomorphic injective map from $\C$ to itself is an
affine transformation.
Thus we have
\[
f_1\left(\sqrt[]{p}e^{\sqrt[]{-1}\theta_1}z_1,\ldots,\sqrt[]{p}e^{\sqrt[]{-1}\theta_{q}}z_{1},
\sqrt[]{q}e^{\sqrt[]{-1}\theta_{1+q}}z_{1},\ldots,
\sqrt[]{q}e^{\sqrt[]{-1}\theta_{p+q}}z_{1}\right) = cz_1,
\]
with some non-zero constant $c \in \C$,
since $f(0) = 0$.
Therefore we have
\begin{eqnarray*}
\sum_{|\nu|=k} e^{\sqrt[]{-1}\langle \nu, \theta \rangle}
a^{(1)}_{\nu}(\sqrt[]{p})^{|\nu'|}(\sqrt[]{q})^{|\nu''|}
= 0,
\end{eqnarray*}
for each $k>1$, 
$\theta=(\theta_1,\ldots,\theta_{p+q}) \in \R^{p+q}$,
where $\nu=(\nu',\nu'') \in \Z_{\geq 0}^q \times \Z_{\geq 0}^p$.
Since these equations hold for any $\theta \in \R^{p+q}$,
we have $a^{(1)}_{\nu}=0$ for $|\nu|>1$.
This implies that, for any $1 \leq j \leq p+q$, $a^{(j)}_{\nu}=0$ for $|\nu|>1$
by Lemma~\ref{1.2}.
Thus we have proven that $f=E_{p+q}$,
and this completes the proof.

\end{proof}

\end{document}